\documentclass[11pt]{amsart}
\usepackage[margin=1in]{geometry}
\usepackage{amsmath, amsfonts, amssymb, amsthm}
\usepackage{amsrefs}
\usepackage{mathrsfs}
\usepackage{enumitem} 
\usepackage{hyperref}
\usepackage{color}
\usepackage{dsfont}
\usepackage{csquotes}
\usepackage{epigraph}
\usepackage{comment}
\usepackage{mlmodern}
\usepackage[T1]{fontenc}

\usepackage{color}
\definecolor{darkblue}{rgb}{0.0,0.0,0.3}
\hypersetup{
    colorlinks=false,
    linkcolor=blue,
    urlcolor=darkblue,
    }

\newtheorem{theorem}{Theorem}[section]

\newtheorem{lemma}[theorem]{Lemma}
\newtheorem{corollary}[theorem]{Corollary}

\theoremstyle{definition}

\newtheorem{definition}[theorem]{Definition}

\newtheorem{notation}[theorem]{Notation}

\theoremstyle{remark}
\newtheorem{remark}[theorem]{Remark}

\numberwithin{equation}{section}

\newcommand{\bR}{\mathbb{R}}

\newcommand{\Ampere}{Amp\`{e}re}

\newcommand{\Garding}{G\r{a}rding}
\newcommand\norm[1]{\left\lVert#1\right\rVert}

\title[Interior curvature estimates]{A note on interior curvature estimates for strictly convex solutions to the equation of prescribed curvature quotient}
\author{Bin Wang}
\dedicatory{}
\address[]{Department of Mathematics, The Chinese University of Hong Kong, Hong Kong, China} 
\curraddr{Institute for Theoretical Sciences, Westlake University, Hangzhou, China}
\email{wangbin@westlake.edu.cn}
\subjclass[2020]{Primary 35J60, 35B45; Secondary 53C21}
\keywords{The prescribed curvature equations, Hessian equations, convex hypersurfaces, fully nonlinear elliptic equations, a priori estimates.}

\begin{document}

\setcounter{tocdepth}{1} 
\begin{abstract}
In this note, we prove a priori interior curvature bounds for strictly convex solutions to elliptic Weingarten curvature quotient equations. The proof does not employ the advanced methods involving integral estimates or compactness arguments. Instead, it relies on a concavity inequality for the equation operator and a novel choice of the auxiliary function to carry out an elementary maximum-principle argument. Interior curvature estimates for strictly convex solutions to the special Lagrangian curvature equation in low dimensions also follow as a consequence. 
\end{abstract}
\maketitle
\tableofcontents

\section{Introduction}
In \cite{CNS3,CNS4,CNS5}, Caffarelli, Nirenberg, and Spruck conducted a systematic treatment for Hessian equations,
\begin{equation}
F(D^2u)=f(\lambda(D^2u))=\psi(x), \quad x \in \Omega, \label{Hessian equations}
\end{equation} and the Weingarten curvature equations,
\begin{equation}
H(D^2u, Du)=f(\kappa[\Sigma])=\psi(X), \quad X \in \Sigma, \label{Weingarten equations}
\end{equation} where $f$ is a smooth symmetric function of $n$ variables, $\lambda(D^2u)=(\lambda_1,\ldots,\lambda_n)$ are the eigenvalues of the Hessian $D^2u$, $\kappa[\Sigma]=(\kappa_1,\ldots,\kappa_n)$ are the principal curvatures of the graph $\Sigma$ of $u$, and $\psi$ is a given positive function. Similar investigations for these two classes of equations were also carried out by Ivochkina \cite{Ivochkina-1, Ivochkina-2} and Krylov \cite{Krylov-1987}. These fully nonlinear equations have been extensively studied since then. Among all choices for the symmetric function $f$, the most typical ones are the $k$-th elementary symmetric polynomials
\begin{align*}
\sigma_k(\mu_1,\ldots,\mu_n)&:=\sum_{1 \leq i_1<\cdots<i_k \leq n} \mu_{i_1}\cdots \mu_{i_k}, \quad 1 \leq k \leq n, \quad \mu \in \bR^n,\\
\sigma_0(\mu_1,\ldots,\mu_n)&:=1,
\end{align*} and their quotients
\[\frac{\sigma_k}{\sigma_l}, \quad 1 \leq l<k \leq n.\] For these choices, both the Hessian equations and the Weingarten equations are elliptic at strictly $k$-convex solutions.

\begin{definition}
Let $0 \leq l < k \leq n$ and let $f=\sigma_k/\sigma_l$.
We say a smooth solution $u$ of \eqref{Hessian equations} is strictly $k$-convex if the eigenvalues $\lambda(D^2u)=(\lambda_1,\ldots,\lambda_n)$ of its Hessian $D^2u$ satisfy
\[\lambda(D^2u) \in \Gamma_k:=\{\lambda \in \bR^n: \sigma_{j}(\lambda)>0 \quad \forall\ 1 \leq j \leq k\}\] everywhere in the domain of consideration. Similarly, we say a smooth solution $u$ of \eqref{Weingarten equations} is strictly $k$-convex if the principal curvatures $\kappa[\Sigma]=(\kappa_1,\ldots,\kappa_n)$ of its graph $\Sigma=(x,u(x))$ satisfy $\kappa[\Sigma] \in \Gamma_k$ at every point of its graph over the domain of consideration. When $k=n$, we say the solution is (locally) strictly convex instead of strictly $n$-convex.
\end{definition}

One major reason for investigating such equations is due to their natural appearance in geometric problems. In particular, for principal curvatures $\kappa=(\kappa_1,\ldots,\kappa_n)$ of a smooth hypersurface,
\[
\begin{alignedat}{2}
\sigma_1(\kappa)
  &= \sum_{i=1}^{n}\kappa_i
  &\quad& \text{is the mean curvature,}\\
\sigma_2(\kappa)
  &= \sum_{i<j}\kappa_i\kappa_j
  && \text{is the scalar curvature,}\\
\sigma_n(\kappa)
  &= \prod_{i=1}^{n}\kappa_i
  && \text{is the Gauss curvature,}\\
\left(\frac{\sigma_n}{\sigma_{n-1}}\right)(\kappa)
  &= \left(\sum_{i=1}^{n}\frac{1}{\kappa_i}\right)^{-1}
  && \text{is the harmonic curvature.}
\end{alignedat}
\] Moreover, for other values of $k$ and $l$, they can still represent remarkable geometric information, e.g., in convex geometry and conformal geometry. From our limited knowledge, we point out the references \cite{Guan-Guan, Guan-Li-Li, Guan-Li, Guan-Ma} and \cite{BV,GVW,V} for some of their appearance in the respective subject. 

In this note, we place our focus on the following prescribed curvature quotient equation,
\[\left(\frac{\sigma_k}{\sigma_{k-2}}\right)(\kappa[\Sigma])=\psi(X), \] and prove interior curvature estimates for strictly convex solutions. In fact, the result remains valid for strictly $(k+1)$-convex solutions. Moreover, our proof is elementary in contrast to the existing advanced methods that involve integral estimates and compactness arguments.

In what follows, we first provide an overview on the interior regularity problem for the $\sigma_k$ equations and their quotients. However, since we will also briefly comment on the existing methods along the way, this overview will comprise several pages. The reader may jump directly to Theorem \ref{our theorem 2} below for our main result if they are familiar with the subject. 

\subsection{Literature review}
It is of great interest to know whether one can derive purely local second derivative estimates for admissible solutions to \eqref{Hessian equations} and \eqref{Weingarten equations}. On the one hand, rigidity properties for entire solutions of quadratic growth can be deduced from such estimates. On the other hand, interior regularity of viscosity solutions also follows from these estimates. Moreover, as we have briefly mentioned above, due to their natural appearance in geometric problems, it is motivating to study various analytic properties for solutions to \eqref{Hessian equations} and \eqref{Weingarten equations}. 

In dimension $n=2$, an interior Hessian bound for the Monge-\Ampere\ equation $\sigma_n(\lambda(D^2u))=\det(D^2u)=\lambda_1\lambda_2=1$ was obtained by Heinz \cite{Heinz} through the use of isothermal coordinates. In particular, this result was employed to solve the problem of constructing a closed convex surface which realizes a given metric on the sphere by a line element, according to the method of H.~Weyl. An elementary pointwise proof was given by Chen-Han-Ou \cite{Chen-Han-Ou}; see also \cite{Liu} for a proof using the partial Legendre transform.

When the dimension $n \geq 3$, Pogorelov \cite[Page 81]{Pogorelov-1978} showed that there is a generalized solution $u$ to the Monge-\Ampere\ equation $\det(D^2u)=\varphi(x)$ with a positive analytic right-hand side $\varphi$ but $u$ is not twice differentiable. In \cite{Urbas}, Urbas showed that this absence of regularity occurs to the $\sigma_k$ equations as well, in the sense that when $3 \leq k \leq n$, there exist viscosity solutions to the $\sigma_k$ equations with a positive smooth right-hand side but the solutions do not belong to $C^{1,\alpha}$ for any $\alpha>1-\frac{2}{k}$. For practical applications, the Pogorelov type interior $C^2$ estimates can often serve as an alternative; such estimates have been derived for the $\sigma_k$ equations in some cases; see \cite{Chou-Wang, Li-Ren-Wang, Zhang, Tu, Sheng-Urbas-Wang} and \cite[Lemma 17]{Ren-Wang-Xiao}.

Now, due to the examples of Pogorelov \cite{Pogorelov-1978} and Urbas \cite{Urbas}, it remains to study the regularity question for the $\sigma_2$ equation in dimensions $n \geq 3$. In \cite{WY}, Warren and Yuan were the first to confirm this regularity for strictly $2$-convex solutions to the equation $\sigma_2(\lambda(D^2u))=1$ in dimension $n=3$. Their proof is based on the observation that solutions to $\sigma_2(\lambda)=1$ in dimension 3 are also solutions to the special Lagrangian equation
\begin{equation}
\sum_{i=1}^{n} \arctan \lambda_i(D^2u)=\Theta \quad \text{with $n=3$ and $\Theta=\frac{\pi}{2}$}. \label{SL eqn}
\end{equation} Thus, the gradient graph $(x,Du(x))$ is Lagrangian and has zero mean curvature in $\bR^3 \times \bR^3$, according to Harvey-Lawson \cite[Theorem 2.3, Proposition 2.17]{HL}. Then, by the Michael-Simon mean value inequality \cite{Michael-Simon}, the subharmonic function $b=\log\sqrt{1+\lambda_{\max}^{2}(D^2u)}$ at any point is bounded by its integral around the point on the minimal Lagrangian graph. The task reduces to estimating the integral which is feasible because the function $b$ satisfies a so-called ``Jacobi inequality''.

Later, Qiu \cite{Qiu-1} extended Warren-Yuan's result to a non-constant right-hand side. The solutions no longer solve a special Lagrangian equation but Qiu observed that the gradient graph $(x,Du(x))$ has bounded mean curvature in $\bR^3 \times \bR^3$ and some steps of Warren-Yuan can still be carried out. In order to overcome the additional complications that arise from the non-zero mean curvature, Qiu established a doubling inequality
\begin{equation}
\sup_{B_1} \Delta u \leq C(n,\norm{u}_{C^1(B_2)}) \sup_{B_{1/2}} \Delta u. \label{doubling}
\end{equation} Qiu also obtained interior curvature estimates for strictly $2$-convex solutions to the prescribed scalar curvature equation $\sigma_2(\kappa)=\psi(X,\nu)$ in dimension $n=3$; see \cite{Qiu-2}. In \cite{Zhou}, Zhou provided an alternative proof by recognizing the equation $\sigma_2=f^2$ as a ``twisted''  special Lagrangian equation
\begin{equation}
\sum_{i=1}^{n} \arctan \frac{\lambda_i(D^2u)}{f(x)}=\Theta \quad \text{with $n=3$ and $\Theta=\frac{\pi}{2}$}.\label{twisted}
\end{equation}

In \cite{SY-Annals}, Shankar and Yuan obtained the interior Hessian bound for strictly $2$-convex solutions to the equation $\sigma_2(\lambda(D^2u))=1$ in dimension $n=4$. For this dimension, there does not seem to be any Lagrangian structure available and the Jacobi inequality also fails. Instead, Shankar and Yuan derived an ``almost-Jacobi'' inequality which is weaker but applicable enough for them to get Qiu's doubling inequality \eqref{doubling}. The proof is different from the known integral methods; it employs an Alexandrov type twice differentiability theorem and a small perturbation theorem due to Savin \cite{Savin}, to bound the Hessian in a small inner ball by Qiu's doubling inequality \eqref{doubling}. Fan \cite{Fan} subsequently extended their result to a non-constant right-hand side by upgrading the almost-Jacobi inequality and the Alexandrov and the Savin parts to hold for a non-constant right-hand side.

For higher dimensions $n \geq 5$, the regularity question remains open for strictly $2$-convex solutions and we refer the reader to a discussion by Mooney \cite{Mooney-JMS}. On the other hand, the regularity is possible if stronger convexity conditions are imposed. In \cite{Guan-Qiu}, Guan and Qiu established interior $C^2$ estimates for solutions satisfying $\sigma_3 \geq -A$ to the quadratic Hessian equation $\sigma_2(\lambda(D^2u))=\psi$ and the prescribed scalar curvature equation $\sigma_2(\kappa)=\psi$. This $\sigma_3$ lower bound constraint is particularly satisfied by strictly convex solutions, thus generalizing the Heinz \cite{Heinz} interior estimates to higher dimensions for isometrically immersed hypersurfaces in $\bR^{n+1}$. A remarkable feature of Guan-Qiu's proof is that they applied the maximum principle to a novel choice of the auxiliary function and then proceeded with a series of delicate analysis. In other words, their proof is quite elementary. Mooney \cite{Mooney-PAMS} provided a short proof of this regularity result for strictly convex solutions by estimating the dimension of the set of points where the solution agrees with a supporting hyperplane. Chen-Jian-Zhou \cite{Chen-Jian-Zhou} used an integral approach which is more complicated but they could weaken the regularity assumption on the right-hand side to $C^{0,1}$ instead of $C^{1,1}$.

Finally, for non-convex solutions in dimensions $n \geq 5$, if $u$ is only slightly non-convex in the sense that
\[D^2u \geq \left[\delta-\sqrt{\frac{2}{n(n-1)}}\ \right]\ I,\] where $\delta>0$ is arbitrary and $I$ is the $n \times n$ identity matrix, then the Legendre-Lewy transform $w$ of $u$ satisfies a uniformly elliptic, concave equation with bounded Hessian. McGonagle, Song, and Yuan showed that the interior $C^2$ estimate would follow from a contradiction argument combined with the classical Evans-Krylov theorem \cite{Evans, Krylov} and the constant rank theorem of Caffarelli-Guan-Ma \cite{CGM}; see \cite{MSY} for the proof. The observation that $w$ satisfies a concave equation under this particular lower bound was also applied to obtain rigidity of entire solutions and interior regularity of viscosity solutions; see Chang-Yuan \cite{CY} and Shankar-Yuan \cite{SY-JMS}, respectively. For a general semi-convex solution $u$, i.e., $D^2u \geq -KI$ for an arbitrary $K>0$, the level set of the Legendre-Lewy transform $w$ would no longer be convex for a large $K$. To compensate for this saddle-ness, Shankar and Yuan \cite{SY-CVPDE} made a change of variables by the Legendre-Lewy transformation and showed that the quantity $b=\log \lambda_{\max}(D^2u)$ is a subsolution of a uniformly elliptic equation. This transformation enabled them to deduce a mean value inequality
\[b(0) \leq C(n,K) \int_{B_1} b(x) \Delta u(x)\ dx\] by the local maximum principle. The Hessian bound then follows by estimating the integral, which is again a feasible task as in the three dimensional case because $b$ satisfies the Jacobi inequality for semi-convex solutions in all dimensions and the linearized operator of $\sigma_2$ is in divergence form. The Hessian bound can also be obtained under a weaker, dynamic semi-convexity condition by Shankar-Yuan's new doubling proof; see their paper \cite{SY-Annals}.

We now turn to the a priori Hessian estimates for quotient equations. In \cite{Lu-2}, Lu studied the following Hessian quotient equations
\begin{equation}
\left(\frac{\sigma_k}{\sigma_{l}}\right)(\lambda_1(x),\ldots,\lambda_n(x))=\psi(x,u(x)), \quad x \in B_{10}, \label{Hessian quotient equation}
\end{equation} where $\lambda_1(x),\ldots,\lambda_n(x)$ are the eigenvalues of the Hessian matrix $D^2u$ of a smooth function $u$. For strictly convex solutions to either of the following two quotients,
\begin{align*}
&\ \left(\frac{\sigma_n}{\sigma_{n-1}}\right)(\lambda_1(x),\ldots,\lambda_n(x))=\psi(x,u(x)) \\
\text{or}\quad &\ \left(\frac{\sigma_n}{\sigma_{n-2}}\right)(\lambda_1(x),\ldots,\lambda_n(x))=\psi(x,u(x)),
\end{align*} Lu was able to obtain purely interior estimates,
\[|D^2u(0)| \leq C,\] for some $C>0$ depending on known data. Note that, according to Guan-Ren-Wang \cite[Theorem 1.2]{Guan-Ren-Wang}, $C^2$ estimates may fail for the quotient equation if the right-hand side contains gradient terms; see also \cite[Remark 1.3]{Fung}. In \cite{Lu-2}, Lu showed that the Hessian quotient equations \eqref{Hessian quotient equation} with $\psi=\psi(x)$ admit non-$C^2$ solutions when $k-l \geq 3$. Therefore, Lu's work \cite{Lu-2} settles, for a large part, the interior regularity problem for Hessian quotient equations. It thus remains to study whether purely local $C^2$ estimates are possible for strictly $k$-convex solutions to the following two quotients,
\begin{equation}
\left(\frac{\sigma_k}{\sigma_{k-1}}\right)(\lambda(D^2u))=\psi(x,u) \quad \text{and} \quad \left(\frac{\sigma_k}{\sigma_{k-2}}\right)(\lambda(D^2u))=\psi(x,u), \label{the two quotients}
\end{equation} when $k<n$. 

In a subsequent work \cite{LT-Crelle}, Lu and Tsai obtained Pogorelov type interior $C^2$ estimates for the Hessian quotient equations $(\sigma_n/\sigma_k)(\lambda(D^2u))=\psi$. In another subsequent work \cite{LT-CPAA}, Lu and Tsai showed that the proof in \cite{Lu-2} can be applied verbatim to obtain purely interior $C^2$ estimates, not for strictly $k$-convex, but for strictly convex solutions to the two quotients in \eqref{the two quotients} if one can derive some concavity inequalities for the two operators. The required inequalities are available for the quotients $\sigma_n/\sigma_k$ by Guan-Sroka \cite{Guan-Sroka}, and they comprise one of the major ingredients for the proofs in \cite{Lu-2} and \cite{LT-Crelle}.

Concerning the two quotients in \eqref{the two quotients}, local $C^2$ estimates for strictly $3$-convex solutions to the quotient equation 
\begin{equation}
\left(\frac{\sigma_3}{\sigma_1}\right)(\lambda(D^2u))=1 \quad \text{in dimension $n=4$} \label{quotient in dimension 4}
\end{equation} were known because it is the special Lagrangian equation \eqref{SL eqn} with $n=4$ and $\Theta=\pi$, for which the local estimates were obtained by Wang and Yuan \cite{Wang-Yuan}. In \cite{Zhou}, Zhou extended the local estimates for \eqref{quotient in dimension 4} to a non-constant right-hand side $f^2(x)$ by recognizing it as a ``twisted'' special Lagrangian equation \eqref{twisted} with $n=4$ and $\Theta=\pi$. In a recent preprint \cite{Lu-Sroka}, Lu and Sroka made a keen observation that if $u$ is a strictly $2$-convex solution to
\[\left(\frac{\sigma_2}{\sigma_1}\right)(\lambda(D^2u))=1,\] then the function
\[v:=u-\frac{1}{2(n-1)}|x|^2\] is a strictly $2$-convex solution to
\[\sigma_2(\lambda(D^2v))=\frac{n}{2(n-1)}.\] Since interior regularity is available for the $\sigma_2$ Hessian equation in several cases as we have reviewed, Lu and Sroka can then conclude directly the same results for the $\sigma_2/\sigma_1$ Hessian quotient equation in all those known cases, by converting information about $v$ back into $u$. In another preprint \cite{Jiao-Sui}, Jiao and Sui obtained the interior estimates for strictly $2$-convex solutions to $\sigma_2/\sigma_1$ with a variable right-hand side $\psi=\psi(x,u)$ in dimension $n=3$ and for semi-convex solutions in all dimensions, by following the doubling proof of Shankar-Yuan \cite{SY-Annals}. The key is that they were able to derive the doubling inequality \eqref{doubling} for the equation $\sigma_2/\sigma_1=\psi$. Whether one can conclude the result for strictly $2$-convex solutions in dimension $n=4$ with a non-constant right-hand side is unclear at the time when we write this manuscript.

In \cite{Mei-Yan}, Mei and Yan followed the observation of Lu-Sroka \cite{Lu-Sroka} to find that if $u$ is a semi-convex solutions to 
\[\left(\frac{\sigma_3}{\sigma_l}\right)(\lambda(D^2u))=1, \quad \text{where $l=1,2$},\] then 
\[v(x):=u(x)-\frac{a}{2}|x|^2\] is a semi-convex solution to
\[\sigma_3(\lambda(D^2v))+C(n,a) \cdot \sigma_2(\lambda(D^2v))=1,\] by choosing appropriate values for the constant $a$. Proceeding in the way of, e.g., Lu \cite{Lu-2} or Shankar-Yuan \cite{SY-CVPDE}, they were able to derive interior $C^2$ estimates for semi-convex solutions $v$ to the above ``sum Hessian'' equations in all dimensions because a concavity inequality is available for this sum Hessian operator, due to the work of Dong-Xu-Zhang \cite{Dong-Xu-Zhang}. Converting back into $u$, one obtain the desired interior $C^2$ estimates for semi-convex $u$ solving the Hessian quotients $\sigma_3/\sigma_l=1$ in all dimensions. Whether these estimates are possible for strictly $3$-convex solutions remains unclear to us at the moment. 

Most recently, when $D^2u>0$, the required concavity inequality for $\sigma_k/\sigma_{k-1}$ has been independently established by Tsai \cite{Tsai} and Li-Wu \cite{Li-Wu}. In addition, Li and Wu \cite{Li-Wu} also established the concavity inequality for $\sigma_k/\sigma_{k-2}$ under the same condition. Therefore, the desired interior $C^2$ estimates for strictly convex solutions to the quotients in \eqref{the two quotients} readily follow as a consequence of Lu-Tsai's framework in \cite{LT-CPAA}. In \cite{Dong-Zhang}, Dong and Zhang improved the concavity inequalities for both operators to hold under the semi-convex condition, i.e., $D^2u \geq -KI$ for some constant $K>0$. Hence, the interior $C^2$ estimates for these two quotients were extended to semi-convex solutions. Whether the estimates would hold for strictly $k$-convex solutions remains still open.

For the prescribed curvature quotient equations,
\[\left(\frac{\sigma_k}{\sigma_l}\right)(\kappa[\Sigma])=\psi(x,u), \quad 1 \leq l \leq k \leq n,\] the Pogorelov type interior curvature estimates for strictly $k$-convex solutions follow from a more general result by Sheng, Urbas, and Wang in their remarkable paper \cite{Sheng-Urbas-Wang}. An elementary proof of the purely interior curvature estimates, which are our main concern in this note, for strictly $k$-convex solutions to the particular quotient $(\sigma_k/\sigma_{k-1})(\kappa[\Sigma])$, also follow from their argument; see \cite[Remark 2.1]{Sheng-Urbas-Wang}. This is due to the special algebraic structure of the operator $\sigma_k/\sigma_{k-1}$ and the presence of an extra positive curvature term arising from the interchanging formula for covariant differentiations of the second fundamental form. However, the other curvature quotient $(\sigma_k/\sigma_{k-2})(\kappa[\Sigma])$ no longer possesses the same structural advantages and hence the same proof cannot be applied in this case. Recently, in a preprint \cite{Jianxiang}, Liu proved the interior curvature estimates for this curvature quotient when $k=n$, and the proof is also of an elementary pointwise nature. It relies on a concavity inequality for the $\sigma_n/\sigma_{n-2}$ operator which was derived by Guan-Sroka \cite{Guan-Sroka} and Lu \cite{Lu-2}, and a minor modification of Guan-Qiu's delicate analysis in their pointwise proof \cite{Guan-Qiu} for the $\sigma_2$ equations. 

The objective of this note is to extend Liu's result to the more general curvature quotient equations
\[\left(\frac{\sigma_k}{\sigma_{k-2}}\right)(\kappa[\Sigma])=\psi(x,u(x)) \quad \text{for strictly convex solutions when $k<n$}.\] For $(n,k)=(4,3)$ and $\psi \equiv 1$, this follows from a result of Qiu-Zhou \cite{Qiu-Zhou} on the special Lagrangian curvature equation,
\[\sum_{i=1}^{n} \arctan \kappa_i=\Theta,\] because in that case, the curvature quotient equation coincides with the special Lagrangian curvature equation with the phase $\Theta=\pi$. Our main result will extend this to all $3 \leq k \leq n-1$ with a non-constant $\psi$.

We now state our results.

\subsection{The results}

\begin{theorem} \label{our theorem 2}
Let $2 \leq k \leq n$ and let $\psi(x,u)$ be a smooth positive function in $\overline{B_r} \times \bR$. Suppose $u$ is a smooth function in $B_r$ with its graph $\Sigma=(x,u(x))$ being strictly convex in the sense that the principal curvatures $\kappa_1,\ldots,\kappa_n$ are strictly positive in $B_r$.
If $u$ solves the equation
\[H(D^2u,Du):=\left( \frac{\sigma_k}{\sigma_{k-2}} \right)(\kappa_1(x),\ldots,\kappa_n(x))=\psi(x,u(x)) \quad \text{in $B_r$},\] then
\begin{equation}
\max_{x \in B_{r/2}} |\kappa_{\max}(x)| \leq C \label{the estimate}
\end{equation} for some constant $C>0$ whose value depends on $n, k, r, \norm{\Sigma}_{C^1(B_r)}$, $\norm{\psi}_{C^2(B_r)}$, $\norm{\psi}_{L^{\infty}(B_r)}$ and $\norm{1/\psi}_{L^{\infty}(B_r)}$.
\end{theorem}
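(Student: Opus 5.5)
We will prove Theorem \ref{our theorem 2} by a pointwise maximum-principle argument in the spirit of Guan--Qiu \cite{Guan-Qiu} and Liu \cite{Jianxiang}. Write $F=\sigma_k/\sigma_{k-2}$, $F^{ii}=\partial F/\partial\kappa_i$ at a point where the second fundamental form $(h_{ij})$ is diagonal, and set $\nu_{n+1}=\langle\nu,e_{n+1}\rangle$. Since $\norm{\Sigma}_{C^1}$ is controlled, $\nu_{n+1}\geq c_0>0$. We consider the auxiliary function
\[
\Phi=\rho(X)^{\beta}\,\frac{\kappa_{\max}}{(\nu_{n+1}-a)^{\gamma}}\,\exp\Bigl(\tfrac{\alpha}{2}|X|^2\Bigr),
\qquad \rho=r^2-|x|^2,
\]
with $a=c_0/2$. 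More conveniently we use the logarithm $\varphi=\log\kappa_{\max}+\beta\log\rho-\gamma\log(\nu_{n+1}-a)+\frac{\alpha}{2}|X|^2$, and perturb $\kappa_{\max}$ in the usual way if it is not smooth. Let $x_0$ be an interior maximum of $\varphi$, and choose normal coordinates with $\kappa_1=\kappa_{\max}$. We then assume $\kappa_1$ is large, since otherwise we are done.

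At $x_0$ we use $\varphi_i=0$ and $F^{ii}\varphi_{ii}\leq0$. Differentiating the equation twice gives $F^{ii}h_{11ii}=\psi_{11}-F^{pq,rs}h_{pq1}h_{rs1}$, with $\psi_{11}\geq -C\kappa_1^2$ by the $C^2$ bound on $\psi$ and the $C^1$ bound on $\Sigma$. The commutation formula contributes $F^{ii}h_{ii}\kappa_1^2-F^{ii}\kappa_i^2\kappa_1$. Because $F$ is homogeneous of degree $2$, $F^{ii}\kappa_i=2\psi$. The term $-\gamma\log(\nu_{n+1}-a)$ produces the good quantity $\frac{\gamma\nu_{n+1}}{\nu_{n+1}-a}F^{ii}\kappa_i^2$. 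This is the term Guan--Qiu use to absorb the bad $-F^{ii}\kappa_i^2$ and all lower-order terms once $\gamma$ is large. The $|X|^2$ term contributes $\alpha\sum F^{ii}$, and the $\rho$ term produces the cutoff errors $-C\beta\sum F^{ii}/\rho$ and $-\beta F^{ii}\rho_i^2/\rho^2$.

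The key ingredient is a concavity inequality for $\sigma_k/\sigma_{k-2}$ on the positive cone. We invoke the Li--Wu inequality \cite{Li-Wu}, or Dong--Zhang \cite{Dong-Zhang}. In the form we need it reads: for any $\delta>0$ there is $K$ such that, if $\kappa_1$ is sufficiently large relative to the other eigenvalues, then
\[
-F^{pq,rs}\xi_{pq1}\xi_{rs1}+K\frac{(F^{pq}\xi_{pq1})^2}{F}\geq 2\sum_{i>1}\frac{F^{11}-F^{ii}}{\kappa_i-\kappa_1}\xi_{11i}^2\;\cdot\;\text{(fraction)}+(1-\delta)\frac{F^{11}\xi_{111}^2}{\kappa_1}-\ldots
\]
That is, up to a small loss it dominates $F^{ii}h_{11i}^2/\kappa_1^2$. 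This controls the bad term $-F^{ii}(\log\kappa_1)_i^2$ coming from $\log\kappa_{\max}$. We split the indices into two sets: those $i$ with $\kappa_i\ge\delta\kappa_1$ (the "large" set) and the rest. For $i$ in the large set we use the concavity inequality together with $\varphi_i=0$. For the small $i$ we use $\varphi_i=0$ to write $h_{11i}/\kappa_1=-\beta\rho_i/\rho+\gamma(\nu_{n+1})_i/(\nu_{n+1}-a)-\alpha\langle X,e_i\rangle$. This turns $F^{ii}h_{11i}^2/\kappa_1^2$ into terms absorbed by $\gamma F^{ii}\kappa_i^2$ (using $(\nu_{n+1})_i=\kappa_i\langle e_i,e_{n+1}\rangle$), by $\alpha\sum F^{ii}$, and by $\beta^2 F^{ii}/\rho^2$.

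\textbf{Main obstacle.} The hardest step is handling $\beta^2F^{ii}\rho_i^2/\rho^2$ against the good terms to conclude $\rho^{\beta}\kappa_1\le C$. To compare these we need a lower bound on $F^{11}$ relative to $\sum F^{ii}$. For strictly convex $\kappa$ with $\kappa_1$ maximal, $F^{11}\kappa_1^2\ge c\,F$, and $\sum F^{ii}\geq c>0$; both follow from the explicit formula $F^{ii}=(\sigma_{k-1}(\kappa|i)\sigma_{k-2}-\sigma_k\sigma_{k-3}(\kappa|i))/\sigma_{k-2}^2$ and Newton--Maclaurin type inequalities on $\Gamma_n$. The plan is to choose $\beta=2$ and $\gamma$, then $\alpha$, large, and then multiply the final inequality by $\rho^2$. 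This should yield $\rho^2\kappa_1\le C$ at $x_0$. It is exactly here that strict convexity (rather than $k$-convexity) is used: in the region where $\kappa_1$ is not dominant, the coefficient of $\gamma F^{ii}\kappa_i^2$ must beat the cutoff terms, and this requires $\kappa_i>0$. We expect this case analysis, a modification of Guan--Qiu's, to be the delicate part. The conclusion \eqref{the estimate} then follows on $B_{r/2}$, where $\rho\ge 3r^2/4$.
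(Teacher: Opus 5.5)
There is a genuine gap, located exactly at the step you call the main obstacle. It is structural, not a matter of tuning $\beta,\gamma,\alpha$.

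With the purely interior cutoff $\rho=r^2-|x|^2$ one has $\sum_iF^{ii}\rho_{ii}=-2\sum_iF^{ii}\bigl(1-\langle e_i,E_{n+1}\rangle^2\bigr)+O(1)$. So your function produces an error of size $\beta\sum_iF^{ii}/\rho$; for a Pogorelov-type cutoff such as $-u$ this term would be bounded by homogeneity, but here it is not. On $\Gamma_n$, $\sum_iF^{ii}$ is comparable to $\sigma_{k-1}/\sigma_{k-2}\sim\kappa_{k-1}$, and this can be comparable to $\kappa_1$. Take $\kappa_1=\cdots=\kappa_{k-1}=M$ and $\kappa_k=\cdots=\kappa_n\approx (k-1)\psi/((n-k+1)M)$. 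Then $F^{jj}\sim 1/M$ for $j\le k-1$ and $F^{jj}\sim M$ for $j\ge k$. Every good term you have ($2\psi\kappa_1$ from the commutator, $\gamma\sum_iF^{ii}\kappa_i^2$, $\alpha\sum_iF^{ii}$) is of order $M$ with no factor $1/\rho$. The inequality at $x_0$ therefore reads $M\,(c+c\gamma+c\alpha-C\beta/\rho)\le C$, which is vacuous once $\rho(x_0)\lesssim\beta/(1+\gamma+\alpha)$. Multiplying by $\rho^2$ changes nothing, since everything is homogeneous in $M$.

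The gradient term from the concavity inequality does not rescue this either. With $b=\log\kappa_1$, your critical equation gives $b_j=2\beta a_j/\rho+\gamma\kappa_j\langle e_j,E_{n+1}\rangle/(\nu_{n+1}-c_0/2)+O(\alpha)$, where $a_j=\sum_{l\le n}\langle X,E_l\rangle\langle E_l,e_j\rangle$. If $a_j=0$ for $j\ge k$ and $Du(x_0)=0$, then $\varepsilon\sum_jF^{jj}b_j^2\lesssim\beta^2/(M\rho^2)$, which is negligible.

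As a sanity check: your bookkeeping uses only a concavity inequality, homogeneity, $F^{11}\kappa_1^2\ge cF$ and $\sum_iF^{ii}\ge c$. The operator $\sigma_n^{1/n}$ enjoys all of these, yet for it Pogorelov's example shows purely interior $C^2$ bounds fail when $n\ge 3$ (Hessian case). Separately, $\nabla^2\psi(e_1,e_1)\ge -C\kappa_1^2$ is too crude: divided by $\kappa_1$ it competes with the commutator term $2\psi\kappa_1$. The correct bound is $-C(1+\kappa_1)$.

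What is missing is Guan--Qiu's auxiliary function
$P=2\log\rho+\log\log\kappa_1-\beta\frac{\langle X,\nu\rangle}{\langle\nu,E_{n+1}\rangle}+\frac{\alpha}{\langle\nu,E_{n+1}\rangle^2}$, used together with three bounds specific to $\sigma_k/\sigma_{k-2}$: $F^{jj}\ge cF\sum_iF^{ii}/\kappa_j^2$ for $j\le k$, $F^{jj}\ge c\sum_iF^{ii}$ for $j\ge k$, and $\sum_iF^{ii}\le C\kappa_{k-1}$.

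\begin{itemize}
\item Li--Wu's inequality gives directly, with no index splitting, the Jacobi inequality $F^{ii}b_{ii}\ge\varepsilon F^{ii}b_i^2+c\kappa_1-F^{ii}\kappa_i^2-C$.
\item The $\log\log$ converts this into the good term $\frac{\varepsilon}{2}\,b\sum_iF^{ii}(b_i/b)^2$. It carries the extra factor $b=\log\kappa_1\to\infty$, while the cutoff errors are only of size $C\sum_iF^{ii}/\rho^2$.
\item The $\langle X,\nu\rangle$-term puts $\beta d_j\kappa_j/\langle\nu,E_{n+1}\rangle^2$ into $b_j/b$. Here $d_j$ is controlled by the position $(a_j)$ rather than by the tilt $\langle e_j,E_{n+1}\rangle$, which may vanish.
\end{itemize}

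The cases then run as follows.
\begin{itemize}
\item If $|x_0|^2\le\frac12$, the $\alpha$-term wins, since $F^{22}\kappa_2^2\ge c\sum_iF^{ii}$. Your argument also works in this regime.
\item If some $|a_j|$ with $j\ge k$ is not small, then either $\kappa_k$ is small and $|b_j/b|\gtrsim 1/\rho$, or $\kappa_k\gtrsim\delta$ and $\sum_iF^{ii}\lesssim\kappa_{k-1}\lesssim 1/\delta$ via $F\ge c\kappa_{k-1}\kappa_k$.
\item If $(a_j)$ concentrates on $\{1,\dots,k-1\}$, one proves $|d_{l_0}|\ge c\langle\nu,E_{n+1}\rangle$ for some $l_0\le k-1$. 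Then either $\kappa_{k-1}\ge C\sqrt{\log\kappa_1}$, giving $F^{l_0l_0}(b_{l_0}/b)^2\,b\gtrsim\beta^2\log\kappa_1\sum_iF^{ii}$, or $\sum_iF^{ii}\lesssim\sqrt{\log\kappa_1}\ll\kappa_1/\log\kappa_1$.
\end{itemize}

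Finally, strict convexity does not enter where you place it. It enters through Li--Wu's inequality on $\Gamma_n$, the sign of the third-order terms $F^{ii}h_{11i}^2$ with $i>m$, and the bound $F\ge c\kappa_{k-1}\kappa_k$.
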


Several remarks are in order.

\begin{remark}
As we have reviewed in the preceding paragraphs, the interior estimates \eqref{the estimate} are known for $k=2$ (the $\sigma_2$ operator) and $k=n$ (the $\sigma_n/\sigma_{n-2}$ quotient operator), due to the work of Guan-Qiu \cite{Guan-Qiu} and Liu \cite{Liu}, respectively. Here our result is mainly for the intermediate cases $3 \leq k \leq n-1$ where the equation operator no longer possesses convenient algebraic structures as $\sigma_2$ and $\sigma_n/\sigma_{n-2}$. To make Guan-Qiu's delicate analysis applicable for the quotient $\sigma_k/\sigma_{k-2}$, one subtlety is to obtain \eqref{claim}; see Remark \ref{concluding remark 2}.
\end{remark}

\begin{remark}
In \cite{LT-CPAA}, Lu and Tsai showed how to use an integral approach to obtain interior $C^2$ estimates for strictly convex solutions of the two Hessian quotients in \eqref{the two quotients}, if the corresponding concavity inequalities were available. The required inequalities have now been verified by Li-Wu \cite{Li-Wu} and Tsai \cite{Tsai}, so the interior $C^2$ estimates would follow as a consequence of Lu-Tsai's framework in \cite{LT-CPAA}. Although our Theorem \ref{our theorem 2} is a ``curvature counterpart'' of their results, some aspects of our result differ from theirs in character. First, whether the integral approach of Lu-Tsai \cite{LT-CPAA} would work for the curvature quotient equations is unknown. It might be expected to work fine but no one has done the attempt, so we cannot say this for sure and hence the proof of the estimate \eqref{the estimate} does not automatically follow from Lu-Tsai \cite{LT-CPAA}. Second, our proof for Theorem \ref{our theorem 2} employs the classical Bernstein technique in the sense that we apply the maximum principle to a suitable auxiliary function and proceed with a series of standard computations. The proof particularly relies on a concavity inequality for the quotient operator $\sigma_k/\sigma_{k-2}$ that has recently been derived by Li-Wu \cite{Li-Wu} and Dong-Zhang \cite{Dong-Zhang}, and the novel auxiliary function due to Guan-Qiu \cite{Guan-Qiu}. Hence, our proof is quite elementary in contrast to the existing advanced methods involving integrals and compactness arguments.
\end{remark}

\begin{remark}
Note the inclusion relation of the \Garding\ cones,
\[\{\kappa \in \bR^n: \kappa_i>0 \quad \forall\ 1 \leq i \leq n\}=\Gamma_n \subseteq \cdots \subseteq \Gamma_{k+1} \subseteq \Gamma_{k} \subseteq \cdots \subseteq \Gamma_{1}.\] The ultimate goal is to derive the interior curvature estimates \eqref{the estimate} for solutions with $\kappa \in \Gamma_k$ to the curvature quotient equation $\sigma_k/\sigma_{k-2}=\psi$. Although our theorem is stated for solutions with $\kappa \in \Gamma_n$, our proof remains valid for solutions with $\kappa \in \Gamma_{k+1}$; see Remark \ref{concluding remark 1}.
\end{remark}

\begin{remark}
In an earlier version of this manuscript, we had assumed the validity of the concavity inequality \eqref{concavity} and we showed how to derive the interior curvature estimates \eqref{the estimate} once the inequality were available. After the manuscript was submitted for peer review, we found the preprints of Li-Wu \cite{Li-Wu} and Dong-Zhang \cite{Dong-Zhang} in which they proved the required concavity inequality. We thus no longer need to make that assumption and have modified the contents into the current version.
\end{remark}

Moreover, since the quotient operator $\sigma_k/\sigma_{k-2}$ coincides with the special Lagrangian operator in low dimensions, we deduce the following.

\begin{corollary} \label{our corollary}
Let $n=3,4$ and let $\psi(x,u)$ be a smooth positive function in $\overline{B_r} \times \bR$. Suppose $u$ is a smooth function in $B_r$ with its graph $\Sigma=(x,u(x))$ being strictly convex in the sense that the principal curvatures $\kappa_1,\ldots,\kappa_n$ are strictly positive in $B_r$.
If $u$ solves the equation
\[H(D^2u,Du):=\sum_{i=1}^{n} \arctan \left[\frac{\kappa_i(x)}{\psi(x,u(x))}\right]=\pi \quad \text{in $B_r$},\] then
\[
\max_{x \in B_{r/2}} |\kappa_{\max}(x)| \leq C 
\] for some constant $C>0$ whose value depends on \[n, r, \norm{\Sigma}_{C^1(B_r)}, \norm{\psi}_{C^2(B_r)}, \norm{\psi}_{L^{\infty}(B_r)}, \allowbreak \quad \text{and}\quad \norm{1/\psi}_{L^{\infty}(B_r)}.\]
\end{corollary}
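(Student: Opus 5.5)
The plan is to reduce the equation to the curvature quotient equation $\sigma_3/\sigma_1=\psi^2$ and then apply Theorem \ref{our theorem 2} with $k=3$, which is admissible since $2\le 3\le n$ for $n=3,4$. The reduction comes from the usual complex-number identity for the special Lagrangian operator. Set $\mu_i:=\kappa_i/\psi(x,u(x))$, which is positive because $\kappa_i>0$ and $\psi>0$. Expanding the product gives
\[
\prod_{j=1}^{n}(1+\sqrt{-1}\,\mu_j)=\bigl(\sigma_0-\sigma_2+\sigma_4-\cdots\bigr)(\mu)+\sqrt{-1}\,\bigl(\sigma_1-\sigma_3+\cdots\bigr)(\mu),
\]
and $\sum_i\arctan\mu_i$ is the argument of this product.

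First, I use the hypothesis to force the imaginary part to vanish. Each $\arctan\mu_i$ lies in $(0,\pi/2)$. Since the sum equals $\pi$, the product lies on the negative real axis, so its imaginary part is zero. For both $n=3$ and $n=4$ the imaginary part is exactly $\sigma_1(\mu)-\sigma_3(\mu)$, because $\sigma_5$ vanishes identically when $n\le4$. Hence $\sigma_3(\mu)=\sigma_1(\mu)$. Since $\sigma_1(\mu)>0$, this reads $(\sigma_3/\sigma_1)(\mu)=1$.

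Second, I undo the scaling. By homogeneity, $\sigma_3(\mu)=\psi^{-3}\sigma_3(\kappa)$ and $\sigma_1(\mu)=\psi^{-1}\sigma_1(\kappa)$. Therefore
\[
\left(\frac{\sigma_3}{\sigma_1}\right)(\kappa_1(x),\ldots,\kappa_n(x))=\tilde\psi(x,u(x)),\qquad \tilde\psi:=\psi^2 .
\]
This is precisely the equation of Theorem \ref{our theorem 2} with $k=3$ and $k-2=1$. The function $\tilde\psi$ is smooth and positive on $\overline{B_r}\times\bR$, and $\Sigma$ is strictly convex by assumption, so all hypotheses of the theorem hold.

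Finally, I check the dependence of the constant. We have $\norm{\tilde\psi}_{L^\infty}=\norm{\psi}_{L^\infty}^2$ and $\norm{1/\tilde\psi}_{L^\infty}=\norm{1/\psi}_{L^\infty}^2$. Moreover $\norm{\tilde\psi}_{C^2}\le C\norm{\psi}_{C^2}^2$, by the product rule for up to two derivatives of $\psi^2$. So the constant from Theorem \ref{our theorem 2}, which depends on $n,k=3,r,\norm{\Sigma}_{C^1(B_r)}$ and these norms, depends only on the data listed in the corollary.

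There is no genuine obstacle here. The only point needing care is the first step: the phase $\pi$ must correspond exactly to $\sigma_1=\sigma_3$, and not to a different branch of the argument. This is settled by the convexity bound on each angle, together with $\sigma_5\equiv0$ in dimensions $3$ and $4$; the latter is also why the statement is restricted to $n=3,4$.
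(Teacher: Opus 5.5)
Your proposal is correct and is essentially the paper's argument. You reduce the phase-$\pi$ equation to $(\sigma_3/\sigma_1)(\kappa)=\psi^2$ and apply the quotient estimate, using the expansion of $\prod_j(1+\sqrt{-1}\,\mu_j)$ where the paper uses the tangent addition formula (this spares you checking $1-\sigma_2(\mu)+\sigma_4(\mu)\neq 0$). For $n=3$ you are using the case $k=n$ of Theorem \ref{our theorem 2}, which this paper does not prove itself but takes from Liu's result on $\sigma_n/\sigma_{n-2}$, and that is the reference the paper's proof cites for $n=3$.
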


\begin{remark}
When $\psi \equiv 1$, this result was proved in all dimensions by Qiu and Zhou \cite{Qiu-Zhou} using an advanced integral method. Our proof thus also serves as an elementary alternative for their result in dimensions $n=3,4$.
\end{remark}

\subsection{Organization of this note}
The rest of this note is organized as follows. In section \ref{preliminaries} we collect standard properties of the $\sigma_k$ operator and introduce some notations. In section \ref{our theorem 2 proof}, we prove Theorem \ref{our theorem 2} and Corollary \ref{our corollary}. The reader may go straight to the case-by-case analysis in section \ref{part 2 of the proof} which is the essence of our proof, if they are familiar with the proof of Guan-Qiu \cite{Guan-Qiu}.

\section{Preliminaries} \label{preliminaries}
Recall the definition of the $k$-th elementary symmetric polynomial,
\[\sigma_k(\kappa_1,\ldots,\kappa_n):=\sum_{1 \leq i_1<\cdots<i_k \leq n} \kappa_{i_1}\cdots \kappa_{i_k}, \quad \kappa=(\kappa_1,\ldots,\kappa_n) \in \bR^n,\] and we adopt the convention that $\sigma_0:=1$ and $\sigma_k:=0$ for $k>n$. The associated $k$-th \Garding\ cone is an open symmetric convex cone defined by 
\[\Gamma_k:=\{\kappa \in \bR^n: \sigma_j(\kappa)>0 \quad \forall\ 1 \leq j \leq k\}.\]

\begin{notation}
Observe that
\[\frac{\partial}{\partial \kappa_i}\sigma_k(\kappa)=\sigma_{k-1}(\kappa)\bigg|_{\kappa_i=0}=\sigma_{k-1}(\kappa_1,\ldots,\kappa_{i-1},0,\kappa_{i+1},\ldots,\kappa_n).\] Therefore, we may use $\sigma_{k-1}(\kappa|i)$ to denote the first order derivatives. The notation $\sigma_{k-2}(\kappa|ij)$ is defined in a similar fashion for the second order derivatives. 
\end{notation}

The following are some commonly used properties of the $\sigma_k$ operator and we may state them without proofs.
\begin{lemma} \label{sigma_k properties 1}
For all $1 \leq k \leq n$ and $\kappa \in \bR^n$, we have
\begin{align*}
\sigma_k(\kappa)&=\kappa_i\sigma_{k-1}(\kappa|i)+\sigma_{k}(\kappa|i),\\
\sum_{i=1}^{n}\kappa_i\sigma_{k-1}(\kappa|i)&=k\sigma_k(\kappa), \\
\sum_{i=1}^{n} \sigma_{k-1}(\kappa|i)&=(n-k+1)\sigma_{k-1}(\kappa).
\end{align*}
\end{lemma}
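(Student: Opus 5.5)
The plan is to prove all three identities by direct combinatorial counting from the definition of $\sigma_k$ as a sum over $k$-element index sets, combined with the description of $\sigma_{k-1}(\kappa|i)$ in the Notation above. Throughout, $\sigma_{j}(\kappa|i)$ is read as the $j$-th elementary symmetric polynomial of the $n-1$ variables $\{\kappa_m\}_{m\neq i}$, with $\sigma_0=1$ and $\sigma_j=0$ for $j>n-1$. This is exactly $\sigma_j$ evaluated at $\kappa$ with the $i$-th entry set to $0$.

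For the first identity, fix $i$ and split the index sets $\{i_1<\cdots<i_k\}$ in the definition of $\sigma_k(\kappa)$ into two classes: those containing $i$ and those not containing $i$. A set containing $i$ contributes $\kappa_i$ times the product over the remaining $k-1$ indices, all different from $i$. Summing over these sets therefore gives $\kappa_i\sigma_{k-1}(\kappa|i)$. The sets not containing $i$ give precisely $\sigma_k(\kappa|i)$. The edge cases are consistent with the conventions: for $k=1$ we have $\sigma_0(\kappa|i)=1$, and for $k=n$ we have $\sigma_n(\kappa|i)=0$.

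For the second identity, use the first one to write $\kappa_i\sigma_{k-1}(\kappa|i)$ as the sum of the monomials $\kappa_{i_1}\cdots\kappa_{i_k}$ of $\sigma_k$ whose index set contains $i$. Summing over $i=1,\ldots,n$ counts each monomial of $\sigma_k$ once for each of its $k$ indices, which yields $k\sigma_k(\kappa)$. Alternatively, this is Euler's identity for the homogeneous polynomial $\sigma_k$ of degree $k$, since $\partial\sigma_k/\partial\kappa_i=\sigma_{k-1}(\kappa|i)$.

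For the third identity, $\sum_i\sigma_{k-1}(\kappa|i)$ is a sum of monomials $\kappa_{j_1}\cdots\kappa_{j_{k-1}}$ over $(k-1)$-subsets $J$. A given subset $J$ appears in the term for index $i$ exactly when $i\notin J$, which happens for $n-(k-1)$ values of $i$. Hence the sum equals $(n-k+1)\sigma_{k-1}(\kappa)$; the case $k=1$ gives $n\cdot 1$, as it should.

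None of these steps poses a real obstacle. The only point requiring care is the bookkeeping of the conventions $\sigma_0=1$ and $\sigma_j=0$ for $j$ beyond the number of variables at the endpoints $k=1$ and $k=n$, and these were checked above.
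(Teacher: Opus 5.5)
Your proposal is correct. The paper states this lemma without proof as a commonly used fact, and your argument is the standard verification: split the $k$-subsets according to whether they contain $i$, double count index incidences for the second and third identities, and check the conventions $\sigma_0=1$ and $\sigma_j=0$ at the endpoints $k=1$ and $k=n$.
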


\begin{lemma} \label{sigma_k properties 2}
Let $\kappa \in \Gamma_k$. Then we have
\[\sigma_{k-1}(\kappa) > \kappa_1\cdots \kappa_{k-1}, \quad \sigma_k(\kappa) \leq C(n,k)\ \kappa_1\cdots \kappa_k.\]
\end{lemma}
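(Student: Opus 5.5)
Throughout, order the entries so that $\kappa_1\geq\kappa_2\geq\cdots\geq\kappa_n$, which is the convention implicit in the statement. The first inequality is meant for $2\leq k\leq n$, since for $k=1$ it reads $1>1$. The plan rests on one standard fact about the G\r{a}rding cones, which I would quote: if $\kappa\in\Gamma_m$ with $m\geq 2$, then $(\kappa|i)\in\Gamma_{m-1}$ for every $i$, where $(\kappa|i)$ is regarded as a vector in $\bR^{n-1}$. Two consequences follow directly. First, if $\kappa\in\Gamma_k$, then $\kappa_1\geq\cdots\geq\kappa_k>0$. Second, applying the fact $k-1$ times, the vector $(\kappa_k,\ldots,\kappa_n)$, obtained by deleting $\kappa_1,\ldots,\kappa_{k-1}$, lies in $\Gamma_1$. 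Hence
\[\kappa_k+\kappa_{k+1}+\cdots+\kappa_n>0 .\]

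For the first inequality I would prove, by induction on $m\geq 1$, the statement: for every $N$ and every decreasingly ordered $\lambda\in\Gamma_m\subset\bR^N$,
\[\sigma_{m-1}(\lambda)\geq\lambda_1\cdots\lambda_{m-1},\]
with strict inequality when $m\geq 2$. The base case $m=1$ is the equality $\sigma_0=1$ with an empty product on the right. For $m\geq 2$, use Lemma \ref{sigma_k properties 1} to write
\[\sigma_{m-1}(\lambda)=\lambda_1\sigma_{m-2}(\lambda|1)+\sigma_{m-1}(\lambda|1).\]
Since $(\lambda|1)\in\Gamma_{m-1}$, the term $\sigma_{m-1}(\lambda|1)$ is positive. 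The vector $(\lambda|1)$ is ordered as $\lambda_2\geq\cdots$, so the induction hypothesis gives $\sigma_{m-2}(\lambda|1)\geq\lambda_2\cdots\lambda_{m-1}$. Multiplying by $\lambda_1>0$ gives $\sigma_{m-1}(\lambda)>\lambda_1\lambda_2\cdots\lambda_{m-1}$. Taking $m=k$ and $\lambda=\kappa$ yields the claim.

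For the second inequality, expand $\sigma_k(\kappa)$ as the sum of the $\binom{n}{k}$ products $\kappa_{i_1}\cdots\kappa_{i_k}$ with $i_1<\cdots<i_k$; note that $i_j\geq j$. I would bound each factor by $|\kappa_{i_j}|\leq n\,\kappa_j$.
\begin{itemize}
\item If $\kappa_{i_j}\geq 0$, then $\kappa_{i_j}\leq\kappa_j$ by the ordering.
\item If $\kappa_{i_j}<0$, then necessarily $i_j>k$, because $\kappa_1,\ldots,\kappa_k>0$. From $\sum_{i\geq k}\kappa_i>0$, the sum of the positive entries among $\kappa_k,\ldots,\kappa_n$ exceeds $|\kappa_{i_j}|$. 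Each such positive entry is at most $\kappa_k$, and there are at most $n$ of them, so $|\kappa_{i_j}|<n\kappa_k\leq n\kappa_j$.
\end{itemize}
Therefore every term satisfies $|\kappa_{i_1}\cdots\kappa_{i_k}|\leq n^k\kappa_1\cdots\kappa_k$. Summing over all terms gives
\[\sigma_k(\kappa)\leq\binom{n}{k}n^k\,\kappa_1\cdots\kappa_k .\]

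The only delicate point is controlling the negative entries, which is exactly where the fact $\sum_{i\geq k}\kappa_i>0$ is needed. That fact in turn relies on the standard cone property $(\kappa|i)\in\Gamma_{k-1}$, which I take as known.
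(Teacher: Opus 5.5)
Your proof is correct. The paper does not prove this lemma; it is listed among the standard properties of $\sigma_k$ ``stated without proofs'', so there is no argument there to compare against. What you wrote is a valid self-contained proof, assuming only the standard cone fact $(\kappa|i)\in\Gamma_{m-1}$.

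That fact enters through its iterated consequence $\kappa_k+\cdots+\kappa_n>0$. This is exactly the inequality \eqref{sum of n-k+1}, which the paper itself quotes from Urbas in Case 2 of the main proof.

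The two conventions you make explicit are genuinely needed, not cosmetic:
\begin{itemize}
\item For $k=1$ the first inequality reads $1>1$.
\item Without the decreasing ordering the second inequality fails. For example, $n=3$, $k=2$, $\kappa=(-1,3,3)\in\Gamma_2$ gives $\sigma_2(\kappa)=3>0>\kappa_1\kappa_2$.
\end{itemize}

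Your induction for the strict lower bound on $\sigma_{k-1}$ is sound. Your termwise bound $|\kappa_{i_j}|\le n\kappa_j$, using $i_j\ge j$ and the control of negative entries by $\kappa_k+\cdots+\kappa_n>0$, yields the constant $\binom{n}{k}n^k$. That suffices, since the paper only ever needs the constant to depend on $n$ and $k$.
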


\begin{lemma} \label{NM}
Let $n \geq k>l \geq 0$ and $n \geq r>s \geq 0$. Suppose $\kappa \in \Gamma_k$. If $k \geq r$ and $l \geq s$, then
\[\left[\frac{\binom{n}{k}^{-1}\sigma_k(\kappa)}{\binom{n}{l}^{-1}\sigma_l(\kappa)}\right]^{\frac{1}{k-l}} \leq \left[\frac{\binom{n}{r}^{-1} \sigma_r(\kappa)}{\binom{n}{s}^{-1}\sigma_s(\kappa)}\right]^{\frac{1}{k-s}}.\] Consequently, we have
\[k(n-l+1)\sigma_k(\kappa)\sigma_{l-1}(\kappa) \leq l(n-k+1) \sigma_{k-1}(\kappa)\sigma_{l}(\kappa).\]
\end{lemma}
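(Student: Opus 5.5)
We prove the inequality in the standard form where the exponent on the right-hand side is $\frac{1}{r-s}$. This is the classical generalized Newton--Maclaurin inequality, and the subsequent "consequently" statement only uses this form. Throughout, write $E_j(\kappa):=\binom{n}{j}^{-1}\sigma_j(\kappa)$ for the normalized elementary symmetric functions, with $E_0=1$. The plan is to reduce everything to Newton's inequality
\[E_{j-1}(\kappa)E_{j+1}(\kappa)\leq E_j(\kappa)^2,\qquad 1\leq j\leq n-1,\]
which holds for \emph{every} $\kappa\in\bR^n$. I would prove it by Rolle's theorem. The polynomial $P(t)=\prod_{i}(t+\kappa_i)=\sum_j\binom{n}{j}E_j t^{n-j}$ has only real roots. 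Differentiating $n-j-1$ times and then passing to the reversed polynomial $t^{\deg}P(1/t)$, and differentiating $j-1$ more times, preserves real-rootedness. This produces a real-rooted quadratic whose coefficients are proportional to $E_{j-1},2E_j,E_{j+1}$, with exactly the binomial factors that cancel. Its nonnegative discriminant is precisely Newton's inequality. I may alternatively just cite this classical fact.

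Next I use the cone condition. For $\kappa\in\Gamma_k$ we have $E_j>0$ for $0\leq j\leq k$, so the ratios $q_j:=E_j/E_{j-1}$, $1\leq j\leq k$, are well-defined and positive. Newton's inequality for $1\le j\le k-1$ reads $q_{j+1}\leq q_j$. Hence $q_1\geq q_2\geq\cdots\geq q_k>0$. The only point needing care is that I never divide by $E_j$ with $j>k$, which is guaranteed by the hypotheses $k\ge r$ and $k>l\ge s$.

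Now telescope:
\[\frac{E_k}{E_l}=\prod_{j=l+1}^{k}q_j,\qquad \frac{E_r}{E_s}=\prod_{j=s+1}^{r}q_j.\]
The desired inequality says that the geometric mean of $\log q_j$ over the window $\{l+1,\dots,k\}$ is at most its geometric mean over the window $\{s+1,\dots,r\}$. Since $j\mapsto \log q_j$ is nonincreasing and the first window is obtained from the second by moving both endpoints to the right ($l\geq s$, $k\geq r$), this is an elementary fact about averages of monotone sequences. To verify it, compare both windows with the intermediate window $\{s+1,\dots,k\}$: enlarging a window on the right lowers the average, and shrinking it from the left also lowers the average. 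This gives the main inequality. I expect the Newton inequality itself to be the only substantive step; the rest is bookkeeping.

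For the consequence, apply monotonicity directly: $l\le k$ gives $q_k\leq q_l$, i.e.\ $E_kE_{l-1}\leq E_{k-1}E_l$. Converting back with $\binom{n}{j-1}/\binom{n}{j}=\frac{j}{n-j+1}$ yields
\[\frac{k}{n-k+1}\,\frac{\sigma_k}{\sigma_{k-1}}\leq \frac{l}{n-l+1}\,\frac{\sigma_l}{\sigma_{l-1}},\]
which is the stated inequality $k(n-l+1)\sigma_k\sigma_{l-1}\leq l(n-k+1)\sigma_{k-1}\sigma_l$ after clearing the positive denominators.
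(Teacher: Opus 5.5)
Your proof is correct. The paper does not prove this lemma; it lists it among standard facts stated without proof, so there is no argument of its own to compare with. Your route is the standard proof of the generalized Newton--Maclaurin inequality:
\begin{itemize}
\item Newton's inequality via Rolle's theorem;
\item monotonicity of $q_j=E_j/E_{j-1}$ for $1\le j\le k$, using only positivity of $E_0,\dots,E_k$ on $\Gamma_k$;
\item comparison of window averages through the intermediate window $\{s+1,\dots,k\}$.
\end{itemize}

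You were right to replace the exponent $\frac{1}{k-s}$ by $\frac{1}{r-s}$. As printed, the inequality is false whenever $r<k$: take $n=3$, $k=2$, $l=s=0$, $r=1$ and $\kappa=(t,t,t)$ with $t>1$, which gives $t\le t^{1/2}$. Moreover, only the $\frac{1}{r-s}$ version yields the stated consequence.

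Two small slips:
\begin{itemize}
\item You mean the arithmetic mean of $\log q_j$, equivalently the geometric mean of $q_j$, not the geometric mean of $\log q_j$.
\item In the consequence, the case $l=0$ is not covered by $q_k\le q_l$, since $q_0$ is undefined. There you need the convention $\sigma_{-1}=0$, under which both sides vanish.
\end{itemize}
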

\begin{lemma} \label{concavity 1}
Let $0 \leq l < k \leq n$.
For $\kappa=(\kappa_1,\ldots,\kappa_n) \in \Gamma_k$ and $\xi=(\xi_1,\ldots,\xi_n) \in \bR^n$, we have
\[\sum_{i,j} \left(\frac{\partial^2}{\partial \kappa_i \partial \kappa_j}\left[\frac{\sigma_k(\kappa)}{\sigma_l(\kappa)}\right]\right) \xi_i\xi_j\leq \left(1-\frac{1}{k-l}\right)\frac{\left[\sum_{i}\left(\frac{\partial}{\partial \kappa_i} \left[\frac{\sigma_k(\kappa)}{\sigma_l(\kappa)}\right]\right)\xi_{i}\right]^2}{\left[\frac{\sigma_k(\kappa)}{\sigma_l(\kappa)}\right]}.\] If $\kappa_1 \geq \cdots \geq \kappa_n$, then $\kappa_k>0$ and
\[\frac{\partial}{\partial \kappa_1}\left[\frac{\sigma_k(\kappa)}{\sigma_l(\kappa)}\right] \leq \cdots \leq \frac{\partial}{\partial \kappa_n}\left[\frac{\sigma_k(\kappa)}{\sigma_l(\kappa)}\right].\]
\end{lemma}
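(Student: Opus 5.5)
The first inequality is equivalent to concavity of $F:=f^{1/(k-l)}$ on $\Gamma_k$, where $f:=\sigma_k/\sigma_l$ and $m:=k-l$. Indeed, a direct differentiation gives
\[D^2F=\frac{1}{m}f^{\frac1m-1}\left[D^2f+\left(\frac1m-1\right)\frac{Df\otimes Df}{f}\right].\]
Since $f>0$ on $\Gamma_k$, the condition $D^2F\le 0$ is exactly the claimed bound $D^2f(\xi,\xi)\le(1-\frac1m)(Df\cdot\xi)^2/f$. So the whole first part reduces to concavity of $F$.

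To prove that concavity, I will factor
\[f=\prod_{j=l+1}^{k}\frac{\sigma_j}{\sigma_{j-1}}.\]
Since $\Gamma_k\subseteq\Gamma_j$ for $j\le k$, each factor $q_j:=\sigma_j/\sigma_{j-1}$ is positive on $\Gamma_k$. It is homogeneous of degree one. It is also concave on $\Gamma_j\supseteq\Gamma_k$; this is the classical Marcus--Lopes concavity of $\sigma_j/\sigma_{j-1}$, which I will quote rather than reprove. Then $F=(q_{l+1}\cdots q_k)^{1/m}$ is the geometric mean of $m$ positive concave functions. The geometric mean $(y_1,\dots,y_m)\mapsto(\prod y_i)^{1/m}$ is concave and nondecreasing in each variable on the positive orthant. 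Hence the composition $F$ is concave on the convex set $\Gamma_k$.

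For the second part, I first show $\kappa_k>0$. I will use the standard fact that $\kappa\in\Gamma_k$ implies $(\kappa|i)\in\Gamma_{k-1}$ for each $i$, which yields $\sigma_{k-1}(\kappa|i)>0$. Now argue by induction on $k$. For $k=1$, $\sigma_1>0$ forces $\kappa_1>0$. For the inductive step, suppose $\kappa\in\Gamma_k$ and delete the largest entry $\kappa_1$. The remaining vector lies in $\Gamma_{k-1}$ in $\bR^{n-1}$, so by induction its $(k-1)$-th largest entry, namely $\kappa_k$, is positive.

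For the ordering of derivatives, I will use only symmetry and concavity of $F$. Fix $i<j$, so $\kappa_i\ge\kappa_j$, and define
\[g(t):=F\big(\kappa+t(\kappa_i-\kappa_j)(e_j-e_i)\big),\qquad t\in[0,1].\]
Both endpoints lie in $\Gamma_k$, since $t=1$ merely swaps $\kappa_i$ and $\kappa_j$. Because $\Gamma_k$ is convex, the whole segment stays in $\Gamma_k$. Symmetry of $F$ gives $g(0)=g(1)$, and concavity of $g$ then forces $g'(0)\ge0$, that is,
\[(\kappa_i-\kappa_j)\big(\partial_jF-\partial_iF\big)\ge0.\]
When $\kappa_i>\kappa_j$ this gives $\partial_iF\le\partial_jF$. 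When $\kappa_i=\kappa_j$ the two partials agree by symmetry. Finally, $\partial_if=mF^{m-1}\partial_iF$ with the same positive factor for every $i$, so the ordering transfers from $F$ to $f$.

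The only nontrivial external input is the concavity of $\sigma_j/\sigma_{j-1}$ on $\Gamma_j$. That is the step I expect to be the main obstacle if a self-contained proof is demanded. In that case I would derive it from Newton--Maclaurin-type identities, or equivalently from G\aa rding's theory of hyperbolic polynomials, rather than by brute-force Hessian computation.
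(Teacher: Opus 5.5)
Your proof is correct. The paper itself gives no argument for this lemma; it is listed among the ``commonly used properties'' stated without proof. There is therefore no route to match against, and your write-up supplies a complete proof modulo one classical input.

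It is worth comparing your route with the direct computational one. A direct proof of the ordering would start from the identity
\[
f_i-f_j=(\kappa_j-\kappa_i)\left[\frac{\sigma_{k-2}(\kappa|ij)}{\sigma_l}-\frac{\sigma_k\,\sigma_{l-2}(\kappa|ij)}{\sigma_l^2}\right].
\]
One would then show the bracket is nonnegative on $\Gamma_k$ by expanding $\sigma_k,\sigma_l$ in terms of $(\kappa|ij)$ and using Newton--Maclaurin-type inequalities. That takes some care, since $\sigma_{k-1}(\kappa|ij)$ and $\sigma_k(\kappa|ij)$ need not be positive.

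Your argument replaces this with the general fact that a symmetric concave function $\Phi$ on a symmetric convex domain satisfies $(\kappa_i-\kappa_j)(\Phi_i-\Phi_j)\le 0$. All the work is then concentrated in the concavity of $(\sigma_k/\sigma_l)^{1/(k-l)}$. Your telescoping product and geometric-mean composition reduce that cleanly to the factors $\sigma_j/\sigma_{j-1}$.

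As a bonus, the bracket above is exactly the quantity $-F^{pq,qp}$ of Lemma \ref{concavity 2}, with $p=i$, $q=j$ and the paper's $F=\sigma_k/\sigma_l$. So your argument also shows why the paper can read off $-F^{pq,qp}\ge 0$ from this lemma.

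One small correction of attribution. Marcus--Lopes is the statement on the positive orthant $\Gamma_n$. Since the lemma is asserted on all of $\Gamma_k$, you need the \Garding-cone version: concavity of $\sigma_j/\sigma_{j-1}$ on $\Gamma_{j-1}\supseteq\Gamma_k$. This is also classical and follows, for example, from the theory of hyperbolic polynomials.
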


For a symmetric matrix $A=(a_{ij})$ and an operator
\[F: \{\text{symmetric matrices}\} \to \bR,\] we define
\begin{equation}
F^{ij}(A)=\frac{\partial F}{\partial a_{ij}}, \quad F^{ij,rs}(A)=\frac{\partial^2 F}{\partial a_{ij} \partial a_{rs}}.\label{derivatives of F}
\end{equation}
When $F(A)=f(\lambda(A))$ depends only on the eigenvalues $\lambda(A)=(\lambda_1,\ldots,\lambda_n)$ of $A$ in the form of a smooth symmetric function $f$, the function $F$ is as smooth as $f$ and is concave if $f$ is concave. Moreover, when $A$ is diagonal, we have $F^{ij}=f_i\delta_{ij}$ where
\begin{equation}
f_i=\frac{\partial f}{\partial \lambda_i}.\label{derivatives of f}
\end{equation} Moreover, we have
\[\sum_{i,j} F^{ij}a_{ij}=\sum_{i=1}^{n} f_i(\lambda(A))\lambda_i, \quad \sum_{i,j,k} F^{ij}a_{ik}a_{jk}=\sum_{i=1}^{n} f_i(\lambda(A))\lambda_{i}^2.\]

In this note, we are considering an equation of the form
\[F(A)=f(\lambda(A))=\psi(x,u),\] where 
\[f(\lambda)=\frac{\sigma_k(\lambda)}{\sigma_{l}(\lambda)}, \quad 0 \leq l<k \leq n,\] and the symmetric matrix $A=(a_{ij})$ is given by 
\[a_{ij}=\frac{1}{w}\left[\delta_{ij}-\frac{u_iu_ku_{jk}}{w(1+w)}-\frac{u_ju_ku_{ik}}{w(1+w)}+\frac{u_iu_ju_ku_lu_{kl}}{w^2(1+w)^2}\right], \quad w=\sqrt{1+|Du|^2}.\] The eigenvalues of $A$ are the principal curvatures $\kappa[\Sigma]=(\kappa_1,\ldots,\kappa_n)$ of the graph $\Sigma$ of $u$.

\begin{notation}
The $\sigma_k$ operator can be regarded as a function on the space of $n \times n$ symmetric matrices, by writing $\sigma_k(A)$ to mean $\sigma_k(\lambda(A))$. In particular, $\sigma_k(A)$ is the sum of subdeterminants of order $k \times k$ of $A$.
With this abuse of notation, we may also write $\sigma_{k}^{ij}$ and $\sigma_{k}^{ij,kl}$ to mean the first and the second order derivatives in the sense of \eqref{derivatives of F}. When the matrix is diagonal, the notations $\sigma_{k}^{ii}$ and $\sigma_{k}^{ii,jj}$ can be used to denote the derivatives in the sense of \eqref{derivatives of f} without causing confusion. 
\end{notation}
\begin{lemma} \label{concavity 2}
Let $n \geq 2$ and $1 \leq l<k \leq n$.
For a symmetric matrix $A$, we use $\sigma_{k}(A)$ to mean $\sigma_{k}(\lambda(A))$, where $\lambda(A)=(\lambda_1,\ldots,\lambda_n) \in \bR^n$ are the eigenvalues of $A$. Assume $A=a_{ij}=\lambda_i\delta_{ij}$ is diagonal. Then, for $F=\sigma_k/\sigma_l$, we have
\[F^{ij}=\left(\frac{\sigma_{k}^{ij}}{\sigma_l}-\frac{\sigma_k\sigma_{l}^{ij}}{\sigma_{l}^2}\right)\delta_{ij}\] and
\[F^{pq,rs}=\begin{cases}
-2\frac{\sigma_{k}^{pp}\sigma_{l}^{pp}}{\sigma_{l}^2}+2\frac{\sigma_k(\sigma_{l}^{pp})^2}{\sigma_{l}^3}, & p=q=r=s, \\
\frac{\sigma_{k}^{pp,rr}}{\sigma_l}-\frac{\sigma_{k}^{pp}\sigma_{l}^{rr}}{\sigma_{l}^2}-\frac{\sigma_{k}^{rr}\sigma_{l}^{pp}}{\sigma_{l}^2}-\frac{\sigma_k\sigma_{l}^{pp,rr}}{\sigma_{l}^2}+2\frac{\sigma_k\sigma_{l}^{pp}\sigma_{l}^{rr}}{\sigma_{l}^3}, & p=q, r=s, p \neq r \\
\frac{\sigma_{k}^{pq,qp}}{\sigma_{l}}-\frac{\sigma_k\sigma_{l}^{pq,qp}}{\sigma_{l}^2}, & p=s, q=r, p \neq q \\
0, & \text{otherwise}.
\end{cases}\] Moreover, for $p \neq q$, if additionally $\lambda_p \neq \lambda_q$, then
\[-F^{pq,qp}=\frac{F^{pp}-F^{qq}}{\lambda_q-\lambda_p} \geq 0.\]
\end{lemma}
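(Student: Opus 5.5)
The plan is to compute everything by the quotient rule at the level of matrices, and then evaluate at the diagonal matrix $A=\mathrm{diag}(\lambda_1,\ldots,\lambda_n)$ using the known first and second derivatives of $\sigma_m$ there.

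First, for general symmetric $A$,
\[F^{pq}=\frac{\sigma_k^{pq}}{\sigma_l}-\frac{\sigma_k\sigma_l^{pq}}{\sigma_l^2}.\]
Differentiating once more in $a_{rs}$ gives
\[F^{pq,rs}=\frac{\sigma_k^{pq,rs}}{\sigma_l}-\frac{\sigma_k^{pq}\sigma_l^{rs}}{\sigma_l^2}-\frac{\sigma_k^{rs}\sigma_l^{pq}}{\sigma_l^2}-\frac{\sigma_k\sigma_l^{pq,rs}}{\sigma_l^2}+2\frac{\sigma_k\sigma_l^{pq}\sigma_l^{rs}}{\sigma_l^3}.\]
Since $\sigma_m(A)$ is the sum of the principal $m\times m$ minors, at a diagonal $A$ we have the following standard facts:
\begin{itemize}
\item $\sigma_m^{pq}=\sigma_{m-1}(\lambda|p)\delta_{pq}$;
\item $\sigma_m^{pp,rr}=\sigma_{m-2}(\lambda|pr)$ for $p\neq r$;
\item $\sigma_m^{pp,pp}=0$, because each minor is affine in each diagonal entry;
\item $\sigma_m^{pq,qp}=-\sigma_{m-2}(\lambda|pq)$ for $p\neq q$, coming from the $2\times 2$ block $a_{pp}a_{qq}-a_{pq}a_{qp}$ inside each minor;
\item all other second derivatives vanish.
\end{itemize}
Substituting these into the general formula yields the four cases.
\begin{itemize}
\item When $p=q=r=s$, the first and fourth terms drop out and the two cross terms coincide, which produces the factor $-2$.
\item When $p=q$, $r=s$, $p\ne r$, all five terms survive.
\item When $p=s$, $q=r$, $p\neq q$, the off-diagonal first derivatives vanish, so only the first and fourth terms remain.
\end{itemize}

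For the final identity I would use the elementary relation
\[\sigma_{m-1}(\lambda|p)-\sigma_{m-1}(\lambda|q)=(\lambda_q-\lambda_p)\,\sigma_{m-2}(\lambda|pq),\]
which follows from Lemma \ref{sigma_k properties 1} applied to $\sigma_{m-1}(\lambda|p)$ and $\sigma_{m-1}(\lambda|q)$ after removing index $q$, respectively $p$. Applying it with $m=k$ and $m=l$ gives
\[F^{pp}-F^{qq}=(\lambda_q-\lambda_p)\,\frac{\sigma_{k-2}(\lambda|pq)\sigma_l-\sigma_k\sigma_{l-2}(\lambda|pq)}{\sigma_l^2}.\]
By the third case above, the fraction on the right is exactly $-F^{pq,qp}$. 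Dividing by $\lambda_q-\lambda_p\neq0$ gives the identity.

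The only nontrivial point, and the main obstacle, is the sign $\ge 0$. This needs $\lambda\in\Gamma_k$, which is implicit in the intended use. I would handle it with the monotonicity part of Lemma \ref{concavity 1}, which applies because $F^{ii}=f_i$ at a diagonal matrix. If $\lambda_p>\lambda_q$, then after reordering the eigenvalues decreasingly Lemma \ref{concavity 1} gives $f_p\le f_q$, so numerator and denominator of $(F^{pp}-F^{qq})/(\lambda_q-\lambda_p)$ are both nonpositive. The case $\lambda_p<\lambda_q$ is symmetric. Hence the ratio is nonnegative.
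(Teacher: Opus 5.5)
Your proposal is correct and takes essentially the same route as the paper. The derivative formulas come from the quotient rule plus the standard first and second derivatives of $\sigma_m$ at a diagonal matrix. The identity comes from $\sigma_m^{pp}-\sigma_m^{qq}=(\lambda_q-\lambda_p)\sigma_m^{pp,qq}$ (Lemma \ref{sigma_k properties 1}), and the sign from the monotonicity in Lemma \ref{concavity 1}.

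Your remark that the sign requires $\lambda\in\Gamma_k$ is accurate. The paper invokes Lemma \ref{concavity 1} in the same tacit way, even though the lemma as stated omits this hypothesis.
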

\begin{proof}
The formulas for the first and the second order derivatives can be found in \cite[Lemma 2.2]{LT-CPAA} or \cite[Lemma 2.2]{Lu-2}; they may also be checked by direct computations which we shall omit. Here we only verify the last assertion.

First, for the $\sigma_k$ operator, we can find that
\[\sigma_{k}^{ij}=\sigma_{k-1}(\lambda|i)\delta_{ij},\] and
\[\sigma_{k}^{pq,rs}=\begin{cases}
\sigma_{k-2}(\lambda|pr), & p=q, r=s, p \neq r, \\
-\sigma_{k-2}(\lambda|pr), & p=s, q=r, p \neq q, \\
0, & \text{otherwise}.
\end{cases}\] Next, by Lemma \ref{sigma_k properties 1}, we have the identity
\[\sigma_{k}^{pp}-\sigma_{k}^{qq}=(\lambda_q-\lambda_p) \sigma_{k}^{pp,qq} \quad \text{for all $\lambda \in \bR^n$ and all $1 \leq p,q \leq n$}.\] If $\lambda_p \neq \lambda_q$, then we have
\[\sigma_{k}^{pp,qq}=\frac{\sigma_{k}^{pp}-\sigma_{k}^{qq}}{\lambda_q-\lambda_p}.\] Using these and the formulas for the quotient operator $F$, we get if $p \neq q$ and $\lambda_p \neq \lambda_q$, then
\begin{align*}
-F^{pq,qp}&=-\frac{\sigma_{k}^{pq,qp}}{\sigma_l}+\frac{\sigma_k\sigma_{l}^{pq,qp}}{\sigma_{l}^2} \\
&=\frac{\sigma_{k}^{pp,qq}}{\sigma_{l}}-\frac{\sigma_k\sigma_{l}^{pp,qq}}{\sigma_{l}^2} \\
&=\frac{1}{\sigma_l} \left(\frac{\sigma_{k}^{pp}-\sigma_{k}^{qq}}{\lambda_q-\lambda_p}\right)-\frac{\sigma_{k}}{\sigma_{l}^2}\left(\frac{\sigma_{l}^{pp}-\sigma_{l}^{qq}}{\lambda_q-\lambda_p}\right)\\
&=\frac{1}{\lambda_{q}-\lambda_{p}}\left[\left(\frac{\sigma_{k}^{pp}}{\sigma_{l}}-\frac{\sigma_{k}\sigma_{l}^{pp}}{\sigma_{l}^2}\right)-\left(\frac{\sigma_{k}^{qq}}{\sigma_l}-\frac{\sigma_k\sigma_{l}^{qq}}{\sigma_{l}^2}\right)\right]\\
&=\frac{F^{pp}-F^{qq}}{\lambda_q-\lambda_p}
\end{align*} which is a non-negative quantity by Lemma \ref{concavity 1}.
\end{proof}

Recall the following property for the $\sigma_k$ operator.

\begin{lemma} \label{sigma_k properties 3}
Let $\kappa \in \Gamma_k$ be ordered as $\kappa_1\geq \cdots \geq \kappa_n$. There exists some $\theta=\theta(n,k)>0$ such that for all $1 \leq j \leq k$, we have
\[\sigma_{k}^{jj}(\kappa) \geq \frac{\theta\sigma_{k}(\kappa)}{\kappa_j}.\] If $j \geq k$, then we have
\[\sigma_{k}^{jj}(\kappa) \geq \theta \sigma_{k-1}(\kappa).\]
\end{lemma}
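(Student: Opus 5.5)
The plan is to first establish the second assertion ($j \geq k$), then deduce the first from it together with Lemma \ref{sigma_k properties 2}. Throughout, $\sigma_k^{jj}(\kappa)=\sigma_{k-1}(\kappa|j)$.

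\textbf{Monotonicity.} For $\kappa \in \Gamma_k$ we have $\sigma_{k-2}(\kappa|ij) > 0$; this is the standard fact that deleting two entries of a $\Gamma_k$-vector leaves a $\Gamma_{k-2}$-vector. By Lemma \ref{sigma_k properties 1},
\[\sigma_{k-1}(\kappa|i)-\sigma_{k-1}(\kappa|j)=(\kappa_j-\kappa_i)\,\sigma_{k-2}(\kappa|ij).\]
So the ordering $\kappa_1 \geq \cdots \geq \kappa_n$ gives $\sigma_{k-1}(\kappa|1) \leq \cdots \leq \sigma_{k-1}(\kappa|n)$. This is the $l=0$ case of Lemma \ref{concavity 1}. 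Hence for $j \geq k$ it suffices to prove
\[\sigma_{k-1}(\kappa|k) \geq \theta\, \sigma_{k-1}(\kappa).\]

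\textbf{The case $j = k$.} Write $\sigma_{k-1}(\kappa)=\sigma_{k-1}(\kappa|k)+\kappa_k\sigma_{k-2}(\kappa|k)$.
\begin{itemize}
\item If $\kappa_k \leq 0$, the second term is nonpositive, so the estimate holds with $\theta=1$.
\item If $\kappa_k>0$, then $\kappa_1,\dots,\kappa_k>0$. Lemma \ref{sigma_k properties 2}, applied in the cone $\Gamma_{k-1}$, gives $\sigma_{k-1}(\kappa)\le C\kappa_1\cdots\kappa_{k-1}$. It therefore suffices to show the key bound
\[\sigma_{k-1}(\kappa|k) \geq c(n,k)\,\kappa_1\cdots\kappa_{k-1}.\]
\end{itemize}

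I would prove this key bound by induction on $k$, and this is the main obstacle. The vector $\kappa'=(\kappa|k)\in\bR^{n-1}$ lies in $\Gamma_{k-1}$, and its largest $k-1$ entries are exactly $\kappa_1,\dots,\kappa_{k-1}$. So the bound is the statement ``$\sigma_m(\mu)\ge c\,\mu_1\cdots\mu_m$ for $\mu\in\Gamma_{m+1}$, ordered decreasingly'', with $m=k-1$ and $\mu=\kappa'$. However, $\kappa'$ is a priori only in $\Gamma_{k-1}$, not $\Gamma_k$.

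To get around this, I would use $\sigma_k(\kappa)=\kappa_k\sigma_{k-1}(\kappa|k)+\sigma_k(\kappa|k)>0$. Expanding $\sigma_k(\kappa|k)$ and $\sigma_{k-1}(\kappa|k)$ further in $\kappa_{k+1},\dots$, I would argue by contradiction as follows. Suppose $\sigma_{k-1}(\kappa|k)\le\varepsilon\kappa_1\cdots\kappa_{k-1}$. Then the negative entries of $\kappa$ must be large relative to $\kappa_k$. This forces $\sigma_k(\kappa)\le 0$, a contradiction. This is the classical Lin--Trudinger type argument, and I expect the bookkeeping here to be the delicate part. The fallback is to quote the known result directly.

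\textbf{First assertion ($j\le k$).} Apply the second assertion in the form just proven, to the vector $(\kappa|j)$. This gives
\[\sigma_{k-1}(\kappa|j)\ge c\prod_{i\le k,\,i\ne j}\kappa_i = c\,\frac{\kappa_1\cdots\kappa_k}{\kappa_j},\]
using $\kappa_j>0$ (Lemma \ref{concavity 1}). Combining this with $\sigma_k(\kappa)\le C(n,k)\,\kappa_1\cdots\kappa_k$ from Lemma \ref{sigma_k properties 2} yields
\[\sigma_k^{jj}(\kappa)\ge \frac{\theta\,\sigma_k(\kappa)}{\kappa_j}.\]
The constant $\theta$ is then taken to be the minimum of the constants arising in the two parts.
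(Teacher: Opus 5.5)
\textbf{There is a genuine gap in your proof of the first assertion.} For $j<k$ you claim the intermediate inequality
\[\sigma_{k-1}(\kappa|j)\ \ge\ c\,\frac{\kappa_1\cdots\kappa_k}{\kappa_j},\]
but this inequality is false on $\Gamma_k$, so no argument can produce it. Your derivation of it is also unjustified. Your key bound is about a $\Gamma_k$-vector with an entry of index $\ge k$ removed. The vector $(\kappa|j)$ is in general only in $\Gamma_{k-1}$, and applying the bound at level $k-1$ only says something about $\sigma_{k-2}$. Here is a concrete counterexample: take $n=3$, $k=2$, $j=1$ and $\kappa=(M,1,-1+\tfrac{2}{M+1})$ with $M>1$. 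This vector is ordered, $\sigma_1(\kappa)>0$ and $\sigma_2(\kappa)=1$, so $\kappa\in\Gamma_2$. Yet $\sigma_1(\kappa|1)=\tfrac{2}{M+1}\to 0$, while $\kappa_1\kappa_2/\kappa_1=\kappa_2=1$. The lemma itself still holds here, since $\tfrac{2}{M+1}\ge \tfrac{1}{M}=\sigma_2(\kappa)/\kappa_1$. It holds only because $\sigma_2(\kappa|1)=\kappa_2\kappa_3<0$, not because $\sigma_1(\kappa|1)$ is comparable to $\kappa_2$. The same phenomenon occurs for larger $k$, e.g.\ $\kappa=(M,1,1,-a)$ with $1-2a=2a/M$ for $k=3$.

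\textbf{The missing idea is a case split on the sign of $\sigma_k(\kappa|j)$}, which is exactly how the paper argues. If $\sigma_k(\kappa|j)\le 0$, the identity $\sigma_k(\kappa)=\kappa_j\sigma_{k-1}(\kappa|j)+\sigma_k(\kappa|j)$ together with $\kappa_j>0$ gives $\sigma_k^{jj}(\kappa)\ge \sigma_k(\kappa)/\kappa_j$ at once. If $\sigma_k(\kappa|j)>0$, then $(\kappa|j)$, which already lies in $\Gamma_{k-1}$, lies in $\Gamma_k$. Only now may you apply the first part of Lemma \ref{sigma_k properties 2} to $(\kappa|j)$, whose top $k-1$ entries are the $\kappa_i$ with $i\le k$, $i\ne j$. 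This gives $\sigma_{k-1}(\kappa|j)>\kappa_1\cdots\kappa_k/\kappa_j$, and combining it with $\sigma_k(\kappa)\le C(n,k)\kappa_1\cdots\kappa_k$ finishes as you intended. Note that this part does not use the second assertion at all.

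As for the second assertion, your monotonicity reduction to $j=k$ is correct. However, the two halves of Lemma \ref{sigma_k properties 2} show that $\sigma_{k-1}(\kappa)$ is comparable to $\kappa_1\cdots\kappa_{k-1}$ on $\Gamma_k$. So your key bound $\sigma_{k-1}(\kappa|k)\ge c\,\kappa_1\cdots\kappa_{k-1}$ is equivalent to the Lin--Trudinger inequality itself, and the reduction gains nothing. Your contradiction sketch is not yet a proof. Citing Lin--Trudinger, as the paper does, is the appropriate way to close that part.
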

\begin{proof}
The second inequality is well-known and we omit the proof; see \cite[Theorem 1]{Lin-Trudinger}. For the first inequality, it was stated in \cite{Ren-Wang-2} without proof. Here we provide a proof for completeness. Note that $\kappa_j \geq \kappa_k>0$ for $1 \leq j \leq k$ and $\kappa \in \Gamma_k$, by Lemma \ref{concavity 1}. If $\sigma_k(\kappa|j) \leq 0$, then from Lemma \ref{sigma_k properties 1}, we have
\[\sigma_{k}^{jj}(\kappa)=\frac{\sigma_k(\kappa)-\sigma_k(\kappa|j)}{\kappa_j} \geq \frac{\sigma_k(\kappa)}{\kappa_j}.\] Assume $\sigma_k(\kappa|j)>0$. Then $(\kappa|j) \in \Gamma_k$ by \cite[Lemma 2.1]{Urbas}. It follows from Lemma \ref{sigma_k properties 2} that
\[\sigma_{k}^{jj}(\kappa)=\sigma_{k-1}(\kappa|j)>\frac{\kappa_1\cdots\kappa_{k}}{\kappa_j}\] and there exists some $C(n,k)>0$ such that
\[\sigma_k(\kappa) \leq C(n,k) \kappa_1 \cdots \kappa_k.\] Combing the two inequalities would yield
\[\sigma_{k}^{jj}(\kappa) \geq \frac{\theta \sigma_k(\kappa)}{\kappa_j}\] for 
\[\theta(n,k):=\max\left\{1, \frac{1}{C(n,k)}\right\}.\]
\end{proof}

We now apply this lemma to show a property of the quotient operator which will be used in section \ref{our theorem 2 proof}.
\begin{lemma} \label{bounds for derivatives}
Let $0 \leq l < k \leq n$ and let $\kappa \in \Gamma_k$ be ordered as $\kappa_1 \geq \cdots \geq \kappa_n$. Denote the quotient operator by $F=\sigma_k/\sigma_l$. There exists some $C_1=C_1(n,k,l)>0$ such that
\[\sum_{i=1}^{n} F^{ii}(\kappa) \leq C_1 \frac{\sigma_{k-1}(\kappa)}{\sigma_l(\kappa)}.\]
Moreover, if $j \geq k$, then there exists some $C_2=C_2(n,k,l)>0$ such that
\[F^{jj}(\kappa) \geq C_2(n,k,l) \sum_{i=1}^{n} F^{ii}(\kappa).\] While if $1 \leq j \leq k$ and $l=k-2$, then there exists some $C_3=C_3(n,k,l)>0$ such that
\[F^{jj}(\kappa) \geq C_3(n,k,l) \frac{F(\kappa)}{\kappa_{j}^2} \sum_{i=1}^{n} F^{ii}(\kappa).\] 
\end{lemma}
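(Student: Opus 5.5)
Everything rests on the formula
\[F^{jj}=\frac{\sigma_{k-1}(\kappa|j)\sigma_l-\sigma_k\sigma_{l-1}(\kappa|j)}{\sigma_l^2},\]
from Lemma \ref{concavity 2}, together with the ordering $F^{11}\le\cdots\le F^{nn}$ from Lemma \ref{concavity 1}. We treat the three claims in turn.

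\textbf{Upper bound on $\sum_i F^{ii}$.} Drop the nonpositive term, which is allowed since $\sigma_k>0$ and $\sigma_{l-1}(\kappa|j)\ge 0$; the latter holds because $(\kappa|j)\in\Gamma_{k-1}\subseteq\Gamma_{l-1}$. This gives $F^{jj}\le \sigma_{k-1}(\kappa|j)/\sigma_l$. Summing over $j$ and using Lemma \ref{sigma_k properties 1} yields
\[\sum_i F^{ii}\le (n-k+1)\frac{\sigma_{k-1}}{\sigma_l}.\]
When $l=0$ the same argument applies trivially.

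\textbf{Lower bound for $j\ge k$.} By monotonicity it suffices to treat $j=k$, since $F^{jj}\ge F^{kk}$. For the quotient we also need a lower bound of $\sum_i F^{ii}$ in terms of $\sigma_{k-1}/\sigma_l$. Summing the formula and using Lemma \ref{sigma_k properties 1} gives
\[\sum_i F^{ii}=\frac{(n-k+1)\sigma_{k-1}\sigma_l-(n-l+1)\sigma_k\sigma_{l-1}}{\sigma_l^2}.\]
The Newton--Maclaurin consequence in Lemma \ref{NM} gives $(n-l+1)\sigma_k\sigma_{l-1}\le \frac{l}{k}(n-k+1)\sigma_{k-1}\sigma_l$. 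Hence
\[\sum_i F^{ii}\ge (1-l/k)(n-k+1)\,\sigma_{k-1}/\sigma_l,\]
so $\sum_i F^{ii}$ is comparable to $\sigma_{k-1}/\sigma_l$.

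For $F^{kk}$, I would avoid working with the difference directly. Instead, use the standard fact that $\partial_j(\sigma_k/\sigma_l)$ is bounded below by $c\,\sigma_{k}^{jj}/\sigma_l$, which comes from the concavity of $(\sigma_k/\sigma_l)^{1/(k-l)}$. Concretely, the identity
\[\sigma_{k-1}(\kappa|j)\sigma_l-\sigma_k\sigma_{l-1}(\kappa|j)=\sigma_{k-1}(\kappa|j)\sigma_l(\kappa|j)-\sigma_k(\kappa|j)\sigma_{l-1}(\kappa|j)\]
holds after expanding $\sigma_m=\kappa_j\sigma_{m-1}(\kappa|j)+\sigma_m(\kappa|j)$. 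Newton--Maclaurin applied to $(\kappa|j)$ in $n-1$ variables then bounds the right side below by $c\,\sigma_{k-1}(\kappa|j)\sigma_l(\kappa|j)$. This works when $\sigma_k(\kappa|j)\ge0$; when $\sigma_k(\kappa|j)<0$ the bound is immediate.

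It then remains to show $\sigma_l(\kappa|j)\ge c\,\sigma_l$ for $j\ge k>l$, which holds because removing a small eigenvalue does not destroy $\sigma_l$. Combined with $\sigma_{k-1}(\kappa|j)\ge\theta\sigma_{k-1}$ from Lemma \ref{sigma_k properties 3}, this gives $F^{jj}\ge c\,\sigma_{k-1}/\sigma_l\ge C_2\sum_i F^{ii}$.

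\textbf{Lower bound for $j\le k$ with $l=k-2$.} The same route gives
\[F^{jj}\ge c\,\frac{\sigma_{k-1}(\kappa|j)\sigma_{k-2}(\kappa|j)}{\sigma_{k-2}^2}.\]
By Lemma \ref{sigma_k properties 3} applied with indices $k-1$ and $k-2$, we have $\sigma_{k-1}(\kappa|j)\ge\theta\sigma_{k-1}/\kappa_j$ and $\sigma_{k-2}(\kappa|j)\ge\theta\sigma_{k-2}/\kappa_j$ (for $j>k-2$, use the second inequality of that lemma together with $\kappa_j\,\sigma_{k-3}\gtrsim$ comparable quantities). Hence
\[F^{jj}\ge c\,\frac{\sigma_{k-1}}{\kappa_j^2\sigma_{k-2}}.\]
The target is $\frac{F}{\kappa_j^2}\sum_i F^{ii}\approx \frac{\sigma_k\sigma_{k-1}}{\kappa_j^2\sigma_{k-2}^2}$. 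The comparison therefore needs $\sigma_{k-1}/\sigma_{k-2}\gtrsim \sigma_k\sigma_{k-1}/\sigma_{k-2}^2$, that is, $\sigma_{k-2}\gtrsim\sigma_k$. This inequality is false in general.

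So this step is the main obstacle, and the bound on $\sigma_{k-2}(\kappa|j)$ must be sharpened. I would redo it as follows. Rather than bounding $\sigma_{k-1}(\kappa|j)\sigma_{k-2}(\kappa|j)$ factor by factor, use $\sigma_{k-1}(\kappa|j)\ge\theta\sigma_k/\kappa_j$ for $j\le k$ and $\sigma_{k-2}(\kappa|j)\ge \theta\sigma_{k-1}/\kappa_j$ (both from the first part of Lemma \ref{sigma_k properties 3}, with the degree shifted). This gives $F^{jj}\ge c\,\sigma_k\sigma_{k-1}/(\kappa_j^2\sigma_{k-2}^2)$, which is exactly the target. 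The second of these estimates needs care when $j=k$; there I would use the $\Gamma_k$ structure and the ordering to control the index, checking that case separately.
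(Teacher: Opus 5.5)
Your route is the paper's route. You use the identity $\sigma_k^{jj}\sigma_l-\sigma_k\sigma_l^{jj}=\sigma_{k-1}(\kappa|j)\sigma_l(\kappa|j)-\sigma_k(\kappa|j)\sigma_{l-1}(\kappa|j)$, apply Lemma \ref{NM} to $(\kappa|j)$ when $\sigma_k(\kappa|j)>0$, bound each factor by Lemma \ref{sigma_k properties 3}, and use the first claim to turn $\sigma_{k-1}/\sigma_l$ into $\sum_iF^{ii}$.

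The first two claims go through once one step is made precise. Replace ``removing a small eigenvalue does not destroy $\sigma_l$'' by $\sigma_l(\kappa|j)=\sigma_{l+1}^{jj}(\kappa)\ge\theta\,\sigma_l(\kappa)$. This is the second inequality of Lemma \ref{sigma_k properties 3} with $l+1$ in place of $k$, valid because $j\ge k\ge l+1$ and $\Gamma_k\subseteq\Gamma_{l+1}$. It is this fact, rather than concavity of $(\sigma_k/\sigma_l)^{1/(k-l)}$, that gives $F^{jj}\ge c\,\sigma_k^{jj}/\sigma_l$ for these $j$. Your lower bound on $\sum_iF^{ii}$ and the reduction to $j=k$ are not needed.

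For the third claim, your corrected estimates $\sigma_{k-1}(\kappa|j)\ge\theta\sigma_k/\kappa_j$ and $\sigma_{k-2}(\kappa|j)\ge\theta\sigma_{k-1}/\kappa_j$ are exactly what the paper uses. The ``obstacle'' in your first attempt was only an artifact of applying Lemma \ref{sigma_k properties 3} with the wrong degrees; those inequalities are not even homogeneous.

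The genuine gap is the case you deferred, $j=k$ in the third claim, and it cannot be filled. The bound $\sigma_{k-2}(\kappa|j)=\sigma_{k-1}^{jj}(\kappa)\ge\theta\sigma_{k-1}(\kappa)/\kappa_j$ is the first inequality of Lemma \ref{sigma_k properties 3} for $\sigma_{k-1}$, so it is available only for $j\le k-1$. At $j=k$ the claim itself is false, so no appeal to ``the $\Gamma_k$ structure and the ordering'' can rescue it:

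- Since $F^{kk}>0$ and $\sum_iF^{ii}\ge F^{kk}$, the claim at $j=k$ would force $\kappa_k^2\ge C_3F(\kappa)$.
- Take $\kappa=(a,\ldots,a,b,\ldots,b)\in\Gamma_n$ with $k-1$ entries equal to $a$ and $n-k+1$ entries equal to $b$, where $a\gg b>0$.
- Then $\sigma_k\approx(n-k+1)a^{k-1}b$ and $\sigma_{k-2}\approx(k-1)a^{k-2}$, so $F\approx\frac{n-k+1}{k-1}\,ab$, while $\kappa_k^2=b^2$.
- Letting $a/b\to\infty$ rules out every $C_3>0$. For instance, with $n=k=3$ and $\kappa=(a,a,b)$ one computes $F=a^2b/(2a+b)$ and $F^{33}=2a^3/(2a+b)^2$ directly.

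So the third claim must be restricted to $1\le j\le k-1$, which is exactly what your argument proves. The paper's proof has the same unacknowledged overreach: its step $\sigma_l(\kappa|j)\ge\theta\sigma_{l+1}/\kappa_j$, with $l+1=k-1$, is also justified only for $j\le k-1$. This does not affect the main theorem, which invokes the third claim only for $j=2\le k-1$ in Case 1 and for $j=l_0\le k-1$ in Case 3.
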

\begin{proof}
By a direct computation, we have
\begin{align*}
F^{ii}&=\frac{\sigma_{k}^{ii}\sigma_l-\sigma_k\sigma_{l}^{ii}}{\sigma_{l}^2}.
\end{align*} The first inequality follows by summing over $i$ and invoking Lemma \ref{sigma_k properties 1}.

For the second and the third inequalities, we apply Lemma \ref{sigma_k properties 1} to get the expansions
\begin{align*}
\sigma_l(\kappa)&=\kappa_i\sigma_{l}^{ii}(\kappa)+\sigma_l(\kappa|i),\\
\sigma_{k}(\kappa)&=\kappa_{i}\sigma_{k}^{ii}(\kappa)+\sigma_k(\kappa|i),
\end{align*} then we substitute them back and obtain that
\begin{align*}
&\ \sigma_{k}^{ii}\sigma_l-\sigma_k\sigma_{l}^{ii}\\
=&\ \sigma_{k}^{ii}[\kappa_i\sigma_{l}^{ii}(\kappa)+\sigma_l(\kappa|i)]-[\kappa_{i}\sigma_{k}^{ii}(\kappa)+\sigma_k(\kappa|i)]\sigma_{l}^{ii} \\
=&\ \sigma_{k}^{ii}(\kappa)\sigma_l(\kappa|i)-\sigma_k(\kappa|i)\sigma_{l}^{ii}(\kappa).
\end{align*} If $\sigma_{k}(\kappa|i) \leq 0$, then
\[\sigma_{k}^{ii}\sigma_l-\sigma_k\sigma_{l}^{ii} \geq \sigma_{k}^{ii}(\kappa)\sigma_{l}(\kappa|i).\] Assume $\sigma_{k}(\kappa|i)>0$. Then $(\kappa|i) \in \Gamma_k$ and from Lemma \ref{NM}, we would have
\begin{align*}
&\ \sigma_k(\kappa|i)\sigma_{l}^{ii}(\kappa) \\
=&\ \sigma_k(\kappa|i)\sigma_{l-1}(\kappa|i) \\
\leq &\ \frac{l(n-k+1)}{k(n-l+1)} \sigma_{k-1}(\kappa|i)\sigma_l(\kappa|i)\\
=&\ \frac{l(n-k+1)}{k(n-l+1)} \sigma_{k}^{ii}(\kappa)\sigma_l(\kappa|i).
\end{align*} Thus, in either case, we always have that
\[\sigma_{k}^{ii}\sigma_l-\sigma_k\sigma_{l}^{ii} \geq C(n,k,l) \sigma_{k}^{ii}(\kappa)\sigma_{l}(\kappa|i).\] Now, let $C(n,k,l)$ denote a universal constant depending only on $n,k,l$, whose value may change from line to line but is still denoted by the same symbol. For $1 \leq j \leq k$, we can apply the first inequality in Lemma \ref{sigma_k properties 3} to get
\begin{align*}
F^{jj}&=\frac{\sigma_{k}^{jj}\sigma_l-\sigma_k\sigma_{l}^{jj}}{\sigma_{l}^2} \\
&\geq C(n,k,l) \cdot \frac{\sigma_{k}^{jj}(\kappa)}{\sigma_l(\kappa)} \cdot \frac{\sigma_{l}(\kappa|j)}{\sigma_{l}(\kappa)}\\
&\geq C(n,k,l) \cdot \frac{1}{\kappa_j}\frac{\sigma_{k}(\kappa)}{\sigma_l(\kappa)} \cdot \frac{1}{\kappa_{j}} \frac{\sigma_{l+1}(\kappa)}{\sigma_{l}(\kappa)}.
\end{align*} If $l=k-2$, we obtain
\[F^{jj} \geq \frac{C(n,k,l)}{\kappa_{j}^2} \frac{\sigma_{k}(\kappa)}{\sigma_{k-2}(\kappa)} \frac{\sigma_{k-1}(\kappa)}{\sigma_{k-2}(\kappa)} \geq C(n,k,l)\frac{F(\kappa)}{\kappa_{j}^2}\sum_{i=1}^{n} F^{ii}\] by the first inequality that we have just proved. On the other hand, for $j \geq k$, we apply the second inequality in Lemma \ref{sigma_k properties 3} to obtain that
\begin{align*}
F^{jj} &\geq C(n,k,l) \frac{\sigma_{k}^{jj}(\kappa)}{\sigma_l(\kappa)} \cdot \frac{\sigma_{l}(\kappa|j)}{\sigma_{l}(\kappa)}\\
&\geq C(n,k,l) \frac{\sigma_{k-1}(\kappa)}{\sigma_{l}(\kappa)} \cdot \frac{\sigma_{l}(\kappa)}{\sigma_{l}(\kappa)} \\
&\geq C(n,k,l) \sum_{i=1}^{n} F^{ii}.
\end{align*}

Each claimed inequality of the Lemma is now proved.
\end{proof}

Finally, we conclude this section with a concavity inequality.
\begin{lemma}\label{concavity 3}
Let $3 \leq k \leq n-1$ and let $f:\Gamma^{+} \to \bR$ denote the quotient operator $\sigma_k/\sigma_{k-2}$, where
\[\Gamma^{+}=\Gamma_n=\{\mu \in \bR^n: \mu_i>0 \quad \forall\ 1 \leq i \leq n\} \quad \text{is the positive cone}.\] Suppose there exist some constants $\psi_1 \geq \psi_0>0$ such that 
\[\psi_0 \leq f(\mu) \leq \psi_1 \quad \text{for all $\mu \in \Gamma^{+}$}.\]
 Assume $\mu \in \Gamma^{+}$ is a vector of multiplicity $1 \leq m \leq n-1$ in the sense that
\[\mu_1=\cdots=\mu_m>\mu_{m+1} \geq \cdots \geq \mu_n.\] Then for every $\varepsilon_0 \in (0,1/2)$, we can find some constants $\eta_0=\eta_0(n,k,\varepsilon_0) \geq 2$ and $K=K(n,k,\varepsilon_0) \geq 1$ such that if 
\[\mu_1 \geq \eta_0\sqrt{f(\mu)},\] we have
\begin{equation}
-\sum_{p,q} f_{pq}(\mu) \xi_p\xi_q - (1+\varepsilon_0)\frac{f_1(\mu)\xi_{1}^2}{\mu_1} + 2\sum_{i>m} \frac{f_i(\mu)\xi_{i}^2}{\mu_1-\mu_i} \geq -\frac{K}{f}\left(\sum_{i=1}^{n} f_i(\mu)\xi_i\right)^2 \label{concavity}
\end{equation} for any vector $\xi=(\xi_1,\ldots,\xi_n) \in \bR^n$ with
\[\xi_1=\cdots=\xi_m.\] Here $f_i(\mu)$ and $f_{pq}(\mu)$ are the first and the second order partial derivatives of $f$ evaluated at $\mu$.
\end{lemma}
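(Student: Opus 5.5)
The plan is to reduce \eqref{concavity} to the concavity inequality for $\sigma_k/\sigma_{k-2}$ proved by Li--Wu \cite{Li-Wu} and Dong--Zhang \cite{Dong-Zhang}, after normalizing by homogeneity. Two points still have to be checked: that their hypotheses are met, and that the multiplicity condition $\xi_1=\cdots=\xi_m$ together with the extra nonnegative term $2\sum_{i>m}f_i\xi_i^2/(\mu_1-\mu_i)$ matches the form of the inequality in those works.

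\emph{Step 1: normalization.} The function $f$ is homogeneous of degree $2$, so $f_i$ is homogeneous of degree $1$ and $f_{pq}$ of degree $0$. Replacing $\mu$ by $t\mu$ with $t=f(\mu)^{-1/2}$ multiplies every term of \eqref{concavity} by the same power of $t$. Hence we may assume $f(\mu)=1$. The hypothesis $\mu_1\geq \eta_0\sqrt{f}$ then becomes $\mu_1\geq \eta_0$, and the constants $\psi_0,\psi_1$ play no further role. This is why $\eta_0$ and $K$ depend only on $n,k,\varepsilon_0$.

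\emph{Step 2: the model inequality.} I would quote the Li--Wu/Dong--Zhang result in roughly the following form: for $\mu\in\Gamma^+$ with $f(\mu)=1$, for every $\varepsilon_0$ there are $\eta_0,K$ such that $\mu_1\ge\eta_0$ implies
\[-\sum_{p,q}f_{pq}\xi_p\xi_q+K\Big(\sum_i f_i\xi_i\Big)^2\ \geq\ (1+\varepsilon_0)\frac{f_1\xi_1^2}{\mu_1}-\sum_{i>1}(\text{error})_i\,\xi_i^2 .\]
The error terms $(\text{error})_i$ should be controlled by $f_i\xi_i^2/\mu_1$, or by $f_i\xi_i^2/(\mu_1-\mu_i)$ for $i>m$. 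If their statement has exactly this shape, the remaining work is bookkeeping.

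\emph{Step 3: the block of equal eigenvalues.} For indices $2\leq i\leq m$ we have $\mu_i=\mu_1$ and $\xi_i=\xi_1$, so these coordinates cannot be treated as independent directions. Instead I would group them with index $1$. Using Lemma \ref{concavity 2}, the mixed terms $-f_{1i}$ for $i\le m$ are expressed through the diagonal second derivatives of $\sigma_k$ and $\sigma_{k-2}$. One then checks that the whole block contributes at least $(1+\varepsilon_0)f_1\xi_1^2/\mu_1$ up to terms absorbable into $K(\sum f_i\xi_i)^2$.

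\emph{Step 4: the indices $i>m$.} Since $\mu_1>\mu_i>0$, we have $\frac{1}{\mu_1-\mu_i}\geq\frac{1}{\mu_1}$. So the term $2\sum_{i>m}f_i\xi_i^2/(\mu_1-\mu_i)$ is at least $2\sum_{i>m}f_i\xi_i^2/\mu_1$, and it absorbs the error terms from Step 2. The cross terms $-2f_{1i}\xi_1\xi_i$ with $i>m$ are handled by Cauchy--Schwarz. The $\xi_i^2$ part is charged to $f_i\xi_i^2/(\mu_1-\mu_i)$. The $\xi_1^2$ part is charged either to the slack from $\varepsilon_0$ in the $\xi_1$-term or to $K(\sum f_i\xi_i)^2$. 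The lower bound $f_1\geq C_3 f\mu_1^{-2}\sum_i f_i$ from Lemma \ref{bounds for derivatives} (third inequality with $j=1$, $l=k-2$) ensures that $f_1$ is not too small relative to $\sum_i f_i$ when making these comparisons.

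If the cited inequality is only stated for distinct eigenvalues, I would get the multiplicity case by a limiting argument: perturb $\mu_2,\dots,\mu_m$ slightly below $\mu_1$, keep $\xi$ fixed, and let the perturbation tend to zero. The dangerous terms $f_i\xi_i^2/(\mu_1-\mu_i)$ for $2\le i\le m$ do not occur in \eqref{concavity}. They are exactly what has to be cancelled using $\xi_i=\xi_1$ and the identity $-F^{pq,qp}=(F^{pp}-F^{qq})/(\mu_q-\mu_p)$ of Lemma \ref{concavity 2}.

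I expect the main obstacle to be Step 3, together with the cross terms in Step 4. There one must make sure that the factor $1+\varepsilon_0$ in front of $f_1\xi_1^2/\mu_1$ survives after all absorptions, with constants depending only on $n,k,\varepsilon_0$. This is the point where $\mu_1\geq\eta_0$ is used, to make the lower-order contributions small relative to $f_1\xi_1^2/\mu_1$.
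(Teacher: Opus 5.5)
The paper's proof is just the citation \cite[Theorem 1.1]{Li-Wu}, which it takes to be exactly this multiplicity-$m$ statement: the constraint $\xi_1=\cdots=\xi_m$ and the term $2\sum_{i>m}f_i\xi_i^2/(\mu_1-\mu_i)$ are already part of the cited theorem. Your Step 1 is correct: every term of \eqref{concavity}, and the condition $\mu_1\ge\eta_0\sqrt f$, is invariant under $\mu\mapsto t\mu$. However, the only real argument in your proposal is the reduction from a model inequality of your Step 2 shape, and that reduction does not work.

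\emph{The limiting argument fails.} Apply a simple-eigenvalue version at a point where $\mu_2,\dots,\mu_m$ are pushed slightly below $\mu_1$. The terms $2f_i\xi_i^2/(\mu_1-\mu_i)$, $2\le i\le m$, sit on the favorable side and tend to $+\infty$, because $\xi_i=\xi_1\neq 0$ and $f_i\to f_1>0$. So the limiting inequality is vacuous, and nothing cancels these terms. The quadratic form in \eqref{concavity} contains only the eigenvalue derivatives $f_{pq}$, which are continuous through the coalescence. The identity $-F^{pq,qp}=(F^{pp}-F^{qq})/(\mu_q-\mu_p)$ of Lemma \ref{concavity 2} concerns off-diagonal matrix derivatives, which do not occur in \eqref{concavity} at all, and it requires $\mu_p\neq\mu_q$ anyway.

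\emph{Step 3 is not bookkeeping.} Take errors of the size that actually occur in the known form without multiplicity: $\frac{2}{1+\delta_1}\frac{f_i\xi_i^2}{\mu_1}$ for every $i>1$, as in the Dong--Zhang inequality used in Remark \ref{concluding remark 1}. Then the block indices $2\le i\le m$ produce a deficit $\frac{2(m-1)}{1+\delta_1}\frac{f_1\xi_1^2}{\mu_1}$. Nothing on the left of \eqref{concavity} is indexed by $2\le i\le m$, and this deficit exceeds the available $\xi_1^2$-slack once $m\ge2$. The $K$-term does not help, since it vanishes if you choose the $\xi_i$, $i>m$, with $\sum_i f_i\xi_i=0$. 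The surplus left in the $i>m$ terms need not be comparable to $f_1\xi_1^2/\mu_1$ either. Showing that the whole block still yields $(1+\varepsilon_0)f_1\xi_1^2/\mu_1$ is precisely the multiplicity content of Li--Wu's theorem, and your sketch asserts it without proof. The cross-term and Cauchy--Schwarz remarks in Step 4 do not address this, because the full quadratic form is already on the left of any model inequality you quote.

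So either quote \cite[Theorem 1.1]{Li-Wu} in its exact multiplicity form, as the paper does, or actually prove the block estimate. In the application the issue is moot: for $m\ge2$, \eqref{1st order approximation} and Codazzi give $h_{111}=h_{221}=h_{212}=0$, so there $\xi_1=\cdots=\xi_m=0$. The lemma as stated, however, allows a nonzero common value.
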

\begin{proof}
See \cite[Theorem 1.1]{Li-Wu}.
\end{proof}

\section{Interior curvature estimates} \label{our theorem 2 proof}
In this section, we prove Theorem \ref{our theorem 2}. Before starting, we clarify some notations. The symbol $C$ will be used to denote a universal positive constant whose value will not be specified but will depend on known constants $n,k, \norm{\psi}_{C^2}$, $\norm{\Sigma}_{C^1}$, $\norm{\psi}_{L^{\infty}}$ and $\norm{1/\psi}_{L^{\infty}}$. The magnitude of $C$ will change from line to line but shall not cause confusion because its exact value is not of importance in our analysis. 

Let $u$ be a smooth function in $B_r$ and let $\Sigma=(x,u(x))$ denote its graph. For a local orthonormal frame $\{e_1,\ldots,e_n\}$ on $\Sigma$, covariant differentiation in the direction $e_i$ is denoted by $\nabla_i$. The components of the second fundamental form of $\Sigma$ with respect to this basis are denoted by $h_{ij}$. Hence,
\[h_{ij}=\langle D_{e_i}e_j,\nu \rangle,\] where $D$ and $\langle \cdot,\cdot \rangle$ are the usual connection and inner product on $\bR^{n+1}$, and 
\[\nu=\frac{\langle -Du,1\rangle}{\sqrt{1+|Du|^2}}\] is the upward unit normal vector field. For convenience, we shall write $h_{ijk}=\nabla_k h_{ij}$ and $h_{ijkl}=\nabla_{lk} h_{ij}=\nabla_l \nabla_k h_{ij}$, etc, for covariant differentiations.
\begin{proof}[Proof of Theorem \ref{our theorem 2}]
For simplicity, we work on the case $r=1$. The argument can be readily carried over to general $r>0$. For a point $X=(x,u(x)) \in \Sigma$ and a unit tangential vector $\vartheta$ on $\Sigma$, we consider the exact same auxiliary function of Guan-Qiu \cite{Guan-Qiu} in $B_1$,
\[P(X,\vartheta)=2\log \rho(X)+\log \log h_{\vartheta\vartheta} - \beta \frac{\langle X, \nu \rangle}{\langle \nu, E_{n+1}\rangle}+\alpha \frac{1}{\langle \nu^{n+1}, E_{n+1}\rangle^2},\] where $b=\log h_{\vartheta\vartheta}$, $E_{n+1}=(0,\ldots,0,1)$, $\rho(X)=1-|X|_{\bR^{n+1}}^{2}+\langle X,E_{n+1}\rangle^2=1-|x|_{\bR^n}^{2}$, and $\alpha,\beta>0$ are constants to be chosen later.

We choose orthonormal coordinates $\{E_1,\ldots,E_{n+1}\}$ in $\bR^{n+1}$ such that $E_i \perp E_{n+1}$ and so we may decompose the position vector as
\[X=\sum_{i=1}^{n} \langle X,E_i\rangle E_i + \langle X,E_{n+1}\rangle E_{n+1}.\] Thus,
\[\rho=1-\sum_{i=1}^{n} \langle X,E_i\rangle^2.\]

Suppose the maximum of $P$ is attained at some interior point $x_0 \in B_1$ and $\vartheta(x_0)=e_1(x_0)$. We would have $h_{1i}(x_0)=0$ for $i \geq 2$ after we fix $e_1$, and by rotating $\{e_2,\ldots,e_n\}$, we may assume $h_{ij}(x_0)=\kappa_i(x_0)\delta_{ij}$ is diagonal and 
\[\kappa_1 \geq \cdots \geq \kappa_n\] in the local orthonormal frame $\{e_1,\ldots,e_n\}$ on $\Sigma$.

Let 
\[F=\frac{\sigma_k}{\sigma_{k-2}}.\]
We shall divide the proof into two parts. In the first part, we show that the concavity inequality \eqref{concavity} will lead to a so-called Jacobi inequality. In the second part, with this Jacobi inequality at hand, we carry out a modified version of Guan-Qiu's delicate analysis in \cite{Guan-Qiu}. 

\begin{remark}
The preliminary computation in the first part can be omitted at the first reading, and the reader may directly jump to the second part and see how the inequality is to be used.
\end{remark}

\subsection{Part I: preliminary computations}
If $\kappa_1(x_0)>\kappa_2(x_0)$, then $\kappa_1$ is smooth around $x_0$. We begin to compute, at $x_0$,
\begin{gather} \label{the smooth case} 
\begin{split}
\nabla_i \kappa_1&=h_{11i},\\
\nabla_i\nabla_i \kappa_1&=h_{11ii}+2\sum_{p>1}\frac{h_{1pi}^2}{\kappa_1-\kappa_p}.
\end{split}
\end{gather} However, we shall also treat the case when $\kappa_1$ has multiplicity more than $1$, that is, when
\[\kappa_1=\kappa_2=\cdots=\kappa_m>\kappa_{m+1} \geq \cdots \geq \kappa_n \] for some $m>1$ at $x_0$. In this case, let $\varphi$ be a smooth function such that
\[P(X_0)=2\log \rho(X)+\log \log \varphi - \beta \frac{\langle X, \nu \rangle}{\langle \nu, E_{n+1}\rangle}+\alpha \frac{1}{\langle \nu^{n+1}, E_{n+1}\rangle^2},\] where $X_0=(x_0,u(x_0))$. Since $X_0$ is a local maximum of $P$, we have $\varphi \geq \kappa_1$ near $X_0$ and $\varphi = \kappa_1$ at $X_0$. Hence, we can apply a smooth approximation lemma \cite[Lemma 5]{BCD} to obtain that, at $x_0$,
\begin{align}
\delta_{kl} \cdot \nabla_{i} \varphi&=\nabla_i h_{kl}, \quad 1 \leq k,l \leq m, \label{1st order approximation}\\
\nabla_i\nabla_i \varphi & \geq h_{11ii}+2\sum_{p > m} \frac{h_{1pi}^2}{\kappa_1-\kappa_p}. \label{2nd order approximation}
\end{align} Now, take
\begin{equation}
b=\begin{cases}
\log \kappa_1, & m = 1 \\
\log \varphi,  & m > 1
\end{cases}, \label{definition of b}
\end{equation} we are going to derive a Jacobi inequality for the quantity $b$ from the concavity assumption \eqref{concavity}.

\begin{remark}
If $m=n$, then 
\[\kappa_1(x_0)=\cdots=\kappa_n(x_0).\] From the equation and the fact that $\kappa\in \Gamma_n \subseteq \Gamma_{k+1}$, it would follow from Lemma \ref{sigma_k properties 2} that
\[\psi=\frac{\sigma_k}{\sigma_{k-2}}\geq C(n,k) \kappa_k\kappa_{k-1} = C(n,k) \kappa_{1}^2.\] Hence, the desired curvature bound would be obtained immediately. Therefore, we may assume $m \leq n-1$ to match the hypothesis in Lemma \ref{concavity 3}.
\end{remark}

By \eqref{the smooth case}, \eqref{1st order approximation}, and \eqref{2nd order approximation}, we can always find that, for any multiplicity $m \geq 1$, the following is true at $x_0$,
\begin{align*}
b_i&=\frac{h_{11i}}{\kappa_1},\\
b_{ii}&\geq \frac{h_{11ii}}{\kappa_1}+2\sum_{p>m} \frac{h_{1pi}^2}{\kappa_1(\kappa_1-\kappa_p)}-\frac{h_{11i}^2}{\kappa_{1}^2}.
\end{align*}  Contracting with $F^{ii}$, we consider 
\begin{align}
&\ \sum_{i=1}^{n} F^{ii}b_{ii} - \varepsilon \sum_{i=1}^{n} F^{ii}b_{i}^2 \nonumber \\
\geq &\ \sum_{i=1}^{n} \frac{F^{ii}h_{11ii}}{\kappa_1}+2\sum_{i=1}^{n}\sum_{p>m} \frac{F^{ii}h_{1pi}^2}{\kappa_1(\kappa_1-\kappa_p)}-(1+\varepsilon)\sum_{i=1}^{n} \frac{F^{ii}h_{11i}^2}{\kappa_{1}^2}, \label{Jacobi 1}
\end{align} 
where $\varepsilon>0$ is a universal constant whose value is to be determined later. Let us deal with the first term in \eqref{Jacobi 1}. By the commutator formula, the Codazzi equation, and the Gauss equation, we have
\begin{align*}
h_{11ii}&=h_{ii11}+\sum_{m} (h_{1m}h_{m1}h_{ii}-h_{mi}^2h_{11})\\
&=h_{ii11}+\kappa_{1}^2\kappa_i - \kappa_{i}^2 \kappa_1;
\end{align*} see, for example, \cite[expression (39)]{Guan-Qiu}. Therefore, the first term in \eqref{Jacobi 1} reads
\begin{align*}
\sum_{i=1}^{n} \frac{F^{ii}h_{11ii}}{\kappa_1}&=\sum_{i=1}^{n} \frac{F^{ii}h_{ii11}}{\kappa_1}+\kappa_1\sum_{i=1}^{n} F^{ii}\kappa_i-\sum_{i=1}^{n} F^{ii}\kappa_{i}^2 \\
&\geq \sum_{i=1}^{n} \frac{F^{ii}h_{ii11}}{\kappa_1}+\kappa_1\inf \psi-\sum_{i=1}^{n} F^{ii}\kappa_{i}^2,
\end{align*} where we have applied Lemma \ref{sigma_k properties 1} to the quotient $\sigma_k/\sigma_l$ and get
\[\sum_{i=1}^{n} F^{ii}\kappa_i = (k-l)F \geq \inf \psi \quad \text{for $k-l \geq 1$}.\] We continue to expand the $h_{ii11}$ term by differentiating the equation $F(h_{ij})=\psi$ twice, that is,
\begin{equation}
\sum_{i=1}^{n} F^{ii}h_{ii1}=\nabla \psi(e_1) \label{differentiate once}
\end{equation} and
\[\sum_{i=1}^{n} F^{ii}h_{ii11}+\sum_{p,q,r,s} F^{pq,rs}h_{pq1}h_{rs1}=\nabla^2\psi(e_1,e_1) \geq -C-C\kappa_1\] for some $C>0$ depending on $\norm{\psi}_{C^2}$ and $\norm{\Sigma}_{C^1}$. Putting these expansions altogether back into \eqref{Jacobi 1}, we obtain that
\begin{gather} \label{Jacobi 2}
\begin{split}
&\ \sum_{i=1}^{n} F^{ii}b_{ii} - \varepsilon \sum_{i=1}^{n} F^{ii}b_{i}^2  \\
\geq &\ -\sum_{p,q,r,s} \frac{F^{pq,rs}h_{pq1}h_{rs1}}{\kappa_1}-\sum_{i=1}^{n} F^{ii}\kappa_{i}^2+C\kappa_1-C \\
&\ +2\sum_{i=1}^{n}\sum_{p>m} \frac{F^{ii}h_{1pi}^2}{\kappa_1(\kappa_1-\kappa_p)}-(1+\varepsilon)\sum_{i=1}^{n} \frac{F^{ii}h_{11i}^2}{\kappa_{1}^2}.
\end{split}
\end{gather} We proceed by expanding the remaining terms. By Lemma \ref{concavity 2} and \eqref{1st order approximation}, we have that
\begin{align*}
-\sum_{p,q,r,s} F^{pq,rs} h_{pq1}h_{rs1}&=-\sum_{p,q} F^{pp,qq}h_{pp1}h_{qq1}-\sum_{p \neq q} F^{pq,qp} h_{pq1}^2 \\
&= -\sum_{p,q} F^{pp,qq}h_{pp1}h_{qq1}-2\sum_{i> \max\{j,m\}} F^{ij,ji} h_{ij1}^2 \\
&\geq -\sum_{p,q} F^{pp,qq}h_{pp1}h_{qq1}+2\sum_{i > m} \frac{F^{ii}-F^{11}}{\kappa_1-\kappa_i} h_{11i}^2.\\
\end{align*} On the other hand,
\begin{align*}
2\sum_{i=1}^{n}\sum_{p>m} \frac{F^{ii}h_{1pi}^2}{\kappa_1(\kappa_1-\kappa_p)} &\geq 2\sum_{p>m} \frac{F^{pp}h_{1pp}^2}{\kappa_1(\kappa_1-\kappa_p)}+2\sum_{p>m} \frac{F^{11}h_{1p1}^2}{\kappa_1(\kappa_1-\kappa_p)} \\
&=2\sum_{i>m} \frac{F^{ii}h_{ii1}^2}{\kappa_1(\kappa_1-\kappa_i)} + 2\sum_{i>m} \frac{F^{11}h_{11i}^2}{\kappa_1(\kappa_1-\kappa_i)}.
\end{align*} Moreover, we find that
\begin{equation}
\sum_{i=1}^{n} \frac{F^{ii}h_{11i}^2}{\kappa_{1}^2}=\frac{F^{11}h_{111}^2}{\kappa_{1}^2} + \sum_{i>m} \frac{F^{ii}h_{11i}^2}{\kappa_{1}^2}, \label{third order terms}
\end{equation} because we can deduce from \eqref{1st order approximation} that
\[h_{11i}=h_{1i1}=\delta_{1i} \cdot \nabla_1 \varphi =0 \quad \text{for $1<i \leq m$}.\] Note that the equality in \eqref{third order terms} holds trivially when $m=1$.

Substituting these back into \eqref{Jacobi 2}, we get
\begin{gather} \label{Jacobi 3}
\begin{split}
&\ \sum_{i=1}^{n} F^{ii}b_{ii} - \varepsilon \sum_{i=1}^{n} F^{ii}b_{i}^2  \\
\geq &\ -\sum_{p,q} \frac{F^{pp,qq}h_{pp1}h_{qq1}}{\kappa_1}-(1+\varepsilon)\frac{F^{11}h_{111}^2}{\kappa_{1}^2}+2\sum_{i>m} \frac{F^{ii}h_{ii1}^2}{\kappa_1(\kappa_1-\kappa_i)}\\
&\ + 2\sum_{i>m} \frac{F^{ii}-F^{11}}{\kappa_1(\kappa_1-\kappa_i)} h_{11i}^2 +2\sum_{i>m} \frac{F^{11}h_{11i}^2}{\kappa_1(\kappa_1-\kappa_i)}-(1+\varepsilon)\sum_{i>m} \frac{F^{ii}h_{11i}^2}{\kappa_{1}^2} \\
&\ -\sum_{i=1}^{n} F^{ii}\kappa_{i}^2+C\kappa_1-C. \\
\end{split}
\end{gather} For the second line in \eqref{Jacobi 3}, we apply the concavity assumption \eqref{concavity} to the vector $\xi=(h_{111},h_{221},\ldots,h_{nn1})$ and invoke \eqref{differentiate once},
\begin{gather} \label{k+1 convexity 1}
\begin{split}
&\ -\sum_{p,q} \frac{F^{pp,qq}h_{pp1}h_{qq1}}{\kappa_1}-(1+\varepsilon)\frac{F^{11}h_{111}^2}{\kappa_{1}^2}+2\sum_{i>m} \frac{F^{ii}h_{ii1}^2}{\kappa_1(\kappa_1-\kappa_i)}\\
\geq &\ (1+\varepsilon_0)\frac{F^{11}h_{111}^2}{\kappa_{1}^2}-(1+\varepsilon)\frac{F^{11}h_{111}^2}{\kappa_{1}^2}-\frac{K}{F\kappa_1}\left(\sum_{i=1}^{n} F^{ii}h_{ii1}\right)^2 \\
\geq &\ -CK
\end{split}
\end{gather} for some $C>0$ depending on $\norm{1/\psi}_{L^{\infty}}$, $\norm{\psi}_{C^2}$, and $\norm{\Sigma}_{C^1}$, if $\varepsilon\leq \varepsilon_0$. 

\begin{remark}
By \eqref{1st order approximation}, we have
\[\xi_i=h_{ii1}=\delta_{ii} \cdot \nabla_1 \varphi \quad \text{for $1 \leq i \leq m$}.\] This matches the hypothesis in Lemma \ref{concavity 3}.
\end{remark}

Next, for the third line in \eqref{Jacobi 3}, since $\kappa_i>0$ for all $i$ at all $x \in B_r$, we find that
\begin{gather} \label{k+1 convexity 2}
\begin{split}
&\ 2\sum_{i>m} \frac{F^{ii}-F^{11}}{\kappa_1(\kappa_1-\kappa_i)} h_{11i}^2 +2\sum_{i>m} \frac{F^{11}h_{11i}^2}{\kappa_1(\kappa_1-\kappa_i)}-(1+\varepsilon)\sum_{i>m} \frac{F^{ii}h_{11i}^2}{\kappa_{1}^2} \\
=&\ 2 \sum_{i>m} \frac{F^{ii}h_{11i}^2}{\kappa_1(\kappa_1-\kappa_i)}-(1+\varepsilon)\sum_{i>m} \frac{F^{ii}h_{11i}^2}{\kappa_{1}^2} \\
=&\ \sum_{i>m} \frac{(1-\varepsilon)\kappa_1+(1+\varepsilon)\kappa_i}{\kappa_{1}^2(\kappa_1-\kappa_i)} F^{ii}h_{11i}^2 \\
\geq &\ 0 \quad \text{for $0<\varepsilon \leq 1$}.
\end{split}
\end{gather}

Finally, we conclude that, if we had taken
\[\varepsilon=\min\{1,\varepsilon_0\},\] then with either multiplicity $m=1$ or $m>1$, the quantity $b$ in \eqref{definition of b} always satisfies the following inequality at $X_0$,
\begin{equation}
\sum_{i=1}^{n} F^{ii} b_{ii} \geq \varepsilon \sum_{i=1}^{n} F^{ii}b_{i}^2 + C\kappa_1 - \sum_{i=1}^{n} F^{ii}\kappa_{i}^2-CK-C. \label{The Jacobi inequality}
\end{equation}
The first part of the proof is now complete.
\begin{remark}
Throughout the derivation, we had also assumed 
\[\kappa_1 (x_0) \geq \max\{\Lambda, \eta_0\sqrt{\sup \psi}\},\] where $\eta_0$ is the constant in the concavity assumption \eqref{concavity} and $\Lambda>0$ is some possibly large constant whose value depends only on $n,k,l, \norm{1/\psi}_{L^{\infty}}$, $\norm{\psi}_{C^2}$, and $\norm{\Sigma}_{C^1}$. We will continue to assume this without indication in the second part of the proof.
\end{remark}
\subsection{Part II: the maximum principle arguments} \label{part 2 of the proof}
Now, with the inequality \eqref{The Jacobi inequality}, we show how to proceed for deriving interior curvature estimates. Recall that the auxiliary function
\[P(X)=2\log \rho(X)+\log \log \kappa_1(X) - \beta \frac{\langle X,\nu \rangle}{\langle \nu,E_{n+1}\rangle} + \alpha \frac{1}{\langle \nu, E_{n+1} \rangle^2}\] attains a local maximum at the point $X_0$. When the multiplicity $m>1$, we consider the smooth function $\varphi(X)$ with
\[P(X_0)=2\log \rho(X)+\log \log \varphi(X)- \beta \frac{\langle X,\nu \rangle}{\langle \nu,E_{n+1}\rangle} + \alpha \frac{1}{\langle \nu, E_{n+1} \rangle^2}.\] For any multiplicity $m \geq 1$, we always have the following at $X_0$,
\begin{align}
0&=\frac{2\rho_i}{\rho}+\frac{b_i}{b}-\beta \left[\frac{\langle X,\nu \rangle}{\langle \nu, E_{n+1}\rangle}\right]_{i}-\alpha \frac{2\langle \nu_i,E_{n+1}\rangle}{\langle \nu, E_{n+1}\rangle^2},  \label{Jacobi 1st critical} \\
0&\geq 2\frac{\sum_{i=1}^{n} F^{ii}\rho_{ii}}{\rho}-2\frac{\sum_{i=1}^{n} F^{ii}\rho_{i}^2}{\rho^2}+\sum_{i=1}^{n} \frac{F^{ii}b_{ii}}{b}-\sum_{i=1}^{n} \frac{F^{ii}b_{i}^2}{b^2} \nonumber \\
&\quad -\beta \sum_{i=1}^{n} F^{ii} \left[\frac{\langle X,\nu \rangle}{\langle \nu,E_{n+1}\rangle} \right]_{ii}-2\alpha\frac{\sum_{i=1}^{n} F^{ii} \langle \nu_{ii}, E_{n+1}\rangle}{\langle \nu, E_{n+1}\rangle^3} \nonumber \\
&\quad +6\alpha \frac{\sum_{i=1}^{n} F^{ii}\langle \nu_i, E_{n+1}\rangle^2}{\langle \nu,E_{n+1}\rangle^4}. \label{Jacobi 2nd critical 1}
\end{align} 

By standard computations \cite[expressions (43)-(57)]{Guan-Qiu}, we can derive from \eqref{Jacobi 2nd critical 1} that
\begin{gather} \label{Jacobi 2nd critical 2}
\begin{split}
0 \geq &\ -4\frac{\sum_{i=1}^{n} F^{ii}}{\rho} -16 \frac{\sum_{i=1}^{n} F^{ii} a_{i}^2}{\rho^2} \\
&\ +\sum_{i=1}^{n} \frac{F^{ii}b_{ii}}{b}-\sum_{i=1}^{n} \frac{F^{ii}b_{i}^2}{b^2} \\
&\ +C\alpha \sum_{i=1}^{n} F^{ii}\kappa_{i}^2 - C\left(\frac{1}{\rho}+\beta+\alpha\right),
\end{split}
\end{gather} where
\begin{equation}
a_j=\langle X,e_j\rangle - \langle X,E_{n+1} \rangle \langle e_j,E_{n+1}\rangle=\sum_{i=1}^{n} \langle X,E_i\rangle \langle E_i,e_j\rangle. \label{definition of a}
\end{equation}  For the second line in \eqref{Jacobi 2nd critical 2}, we invoke the inequality \eqref{The Jacobi inequality} that we derived in Part I to get
\begin{align*}
&\ \sum_{i=1}^{n} \frac{F^{ii}b_{ii}}{b}-\sum_{i=1}^{n} \frac{F^{ii}b_{i}^2}{b^2} \\
\geq &\ \varepsilon\sum_{i=1}^{n} \frac{F^{ii}b_{i}^2}{b} - \sum_{i=1}^{n} \frac{F^{ii}b_{i}^2}{b^2}+ \frac{C\kappa_1}{b} - \sum_{i=1}^{n} \frac{F^{ii}\kappa_{i}^2}{b}-\frac{CK+C}{b}.
\end{align*} Therefore, by assuming $\kappa_1 \geq C(1/\rho,\beta,\alpha, \varepsilon)$, it follows from \eqref{Jacobi 2nd critical 2} that
\begin{gather} \label{Jacobi 2nd critical 3}
\begin{split}
0 \geq &\ \frac{\varepsilon}{2}\sum_{i=1}^{n} \frac{F^{ii}b_{i}^2}{b} + \frac{C\kappa_1}{b}+C\alpha \sum_{i=1}^{n} F^{ii}\kappa_{i}^2  -4\frac{\sum_{i=1}^{n} F^{ii}}{\rho} -16 \frac{\sum_{i=1}^{n} F^{ii} a_{i}^2}{\rho^2}.
\end{split}
\end{gather}

\begin{remark}
How \eqref{Jacobi 2nd critical 1} leads to \eqref{Jacobi 2nd critical 2} is not essential; it is through standard but tedious computations which we better omit in here. The key is to show how to handle the terms
\[-4\frac{\sum_{i=1}^{n} F^{ii}}{\rho} -16 \frac{\sum_{i=1}^{n} F^{ii} a_{i}^2}{\rho^2}\] in \eqref{Jacobi 2nd critical 3} via the case-by-case analysis below.
\end{remark}

Now we conduct the aforementioned case-by-case analysis which was first used by Guan-Qiu \cite{Guan-Qiu} for dealing with the $\sigma_2$ operator and later modified by Liu \cite{Jianxiang} for the $\sigma_n/\sigma_{n-2}$ operator. Here we extend the analysis to work for the $\sigma_k/\sigma_{k-2}$ operator. The major changes are for the Case 2 and 3 below, especially for getting \eqref{claim} which is very tricky; see Remark \ref{concluding remark 2}.

Since our estimate can depend on $\norm{\Sigma}_{C^1}$, there exist positive, universal constants $v_0$ and $\eta$ such that
\begin{equation}
v_0 \leq \langle \nu, E_{n+1} \rangle \leq 1 \label{bound for nu}
\end{equation} and
\begin{equation}
\frac{1}{\eta}\sum_{i=1}^{n} \langle X,E_i\rangle^2 \leq \sum_{i=1}^{n} a_{i}^2 \leq \eta \sum_{i=1}^{n} \langle X,E_i \rangle^2.\label{bound for a 1}
\end{equation}
\subsection*{Case 1: $\sum_{i=1}^{n} \langle X, E_i \rangle^2 \leq \frac{1}{2}$.}

\indent

Recall that
\[\rho=1-\sum_{i=1}^{n} \langle X,E_i \rangle^2.\]
In this case, we have $\frac{1}{\rho} \leq 2$ and $\sum_{i=1}^{n} a_{i}^2 \leq \eta/2$. By choosing $\alpha>0$ large enough, we would have
\begin{align*}
&\ C\alpha \sum_{i=1}^{n} F^{ii} \kappa_{i}^2 - 4\frac{\sum_{i=1}^{n} F^{ii}}{\rho} -16 \frac{\sum_{i=1}^{n} F^{ii} a_{i}^2}{\rho^2}\\
\geq &\ C\alpha F^{22}\kappa_{2}^2 - C\eta \sum_{i=1}^{n} F^{ii} \\
\geq &\ C\alpha \inf \psi \sum_{i=1}^{n} F^{ii} - C\eta\sum_{i=1}^{n} F^{ii} \\
\geq &\ 0
\end{align*} by Lemma \ref{bounds for derivatives}. The desired estimate would then follow from \eqref{Jacobi 2nd critical 3}.

Therefore, we assume 
\begin{equation}
\sum_{i=1}^{n} \langle X, E_i \rangle^2 \geq \frac{1}{2} \label{bound for a 2}
\end{equation} and hence $|a_j| > 1/\sqrt{2n\eta}$ for some $1 \leq j \leq n$.

\subsection*{Case 2: $j \geq k$.}

\indent

Suppose $\kappa_k \leq C\delta$ for some small $\delta \in (0,1)$ to be determined. Since $\kappa \in \Gamma_k$, we have \cite[(2.4)]{Urbas}
\begin{equation}
\kappa_k + \cdots + \kappa_n > 0, \label{sum of n-k+1}
\end{equation} from which it follows that 
\[|\kappa_j| \leq C(n,k)\kappa_k \leq C(n,k)\delta \quad \text{for $j \geq k$}.\] Recall from the first order critical condition \eqref{Jacobi 1st critical}, we have
\begin{gather} \label{Jacobi 1st critical 2}
\begin{split}
\frac{b_j}{b}&=\frac{4a_j}{\rho}+\beta \left[\frac{\langle X,\nu \rangle}{\langle \nu, E_{n+1}\rangle} \right]_{i} + \alpha \frac{2\langle \nu_{j}, E_{n+1}\rangle}{\langle \nu,E_{n+1}\rangle^3} \\
&=\frac{4a_j}{\rho}+\beta \frac{d_jh_{jj}}{\langle \nu, E_{n+1}\rangle^2}+\alpha \frac{2h_{jj}\langle e_j, E_{n+1}\rangle}{\langle \nu, E_{n+1}\rangle^3}, 
\end{split}
\end{gather} where
\begin{equation}
d_j=\langle X,e_j\rangle \langle \nu, E_{n+1}\rangle - \langle e_j,E_{n+1}\rangle \langle X,\nu \rangle.\label{definition of d}
\end{equation} Hence, we have
\begin{gather} \label{even smaller}
\begin{split}
\bigg|\frac{b_j}{b}\bigg|& \geq \frac{C}{\rho} - C(\beta+\alpha) |\kappa_{j}| \\
& \geq \frac{C}{\rho} - C(\beta+\alpha) \delta \\
& \geq \frac{C}{\rho}
\end{split}
\end{gather} by choosing $\delta$ small enough. By Lemma \ref{bounds for derivatives}, we then have
\begin{align*}
&\ \frac{\varepsilon}{2}\sum_{i=1}^{n} F^{ii}\frac{b_{i}^2}{b}-4\frac{\sum_{i=1}^{n} F^{ii}}{\rho} -16 \frac{\sum_{i=1}^{n} F^{ii} a_{i}^2}{\rho^2}\\
\geq &\ \frac{\varepsilon}{2} F^{jj} \frac{b_{j}^2}{b^2} \cdot b- C\frac{\sum_{i=1}^{n} F^{ii}}{\rho^2} \\
\geq &\ C(n,k) \varepsilon \sum_{i=1}^{n} F^{ii} \cdot \frac{C}{\rho^2} \cdot b - C\frac{\sum_{i=1}^{n} F^{ii}}{\rho^2} \\
\geq &\ 0
\end{align*} by assuming $b = \log \kappa_1$ is sufficiently large. The estimate for $\kappa_1$ would then follow from \eqref{Jacobi 2nd critical 3}.

While if $\kappa_k \geq C\delta$, then by Lemma \ref{sigma_k properties 2}, Lemma \ref{bounds for derivatives} and the assumption that $\kappa \in \Gamma_{n}$, we have
\begin{equation}
F=\frac{\sigma_k}{\sigma_{k-2}} \geq C(n,k)\kappa_{k-1}\kappa_{k} \geq C(n,k)\delta \kappa_{k-1} \label{k+1 convexity 3}
\end{equation} and so
\[\sum_{i=1}^{n} F^{ii} \leq C(n,k) \frac{\sigma_{k-1}}{\sigma_{k-2}} \leq C(n,k)\kappa_{k-1} \leq \frac{C(n,k)\inf \psi}{\delta}.\] The desired estimate would follow from \eqref{Jacobi 2nd critical 3} by noting that
\[\frac{C\kappa_1}{b} - C\frac{\sum_{i=1}^{n} F^{ii}}{\rho^2} \geq \frac{C\kappa_1}{b} - \frac{C}{\delta} \frac{1}{\rho^2}.\]

In either situation, we can always handle the terms
\[-4\frac{\sum_{i=1}^{n} F^{ii}}{\rho} -16 \frac{\sum_{i=1}^{n} F^{ii} a_{i}^2}{\rho^2}\] and have the desired bound for $\kappa_1$ from \eqref{Jacobi 2nd critical 3}.

\subsection*{Case 3: $1 \leq j \leq k-1$.}

\indent

In this case, we claim that there exists some $1 \leq l_0 \leq k-1$, which may not be the same as $j$, such that
\begin{equation}
|d_{l_0}| \geq \frac{\langle \nu, E_{n+1}\rangle}{4\sqrt{\eta(k-1)}}, \label{claim}
\end{equation} where $\eta$ is the universal constant in \eqref{bound for a 1}. For now, we use this claim to get the curvature estimate and defer the proof of this claim to the end.

If $\kappa_{k-1} \geq C\sqrt{\log \kappa_1}$, then $\kappa_l \geq C \sqrt{\log \kappa_1}$ for all $1 \leq l \leq k-1$. From the first order critical condition \eqref{Jacobi 1st critical 2}, we have
\begin{align*}
\bigg|\frac{b_{l_0}}{b}\bigg|&\geq \bigg|\beta \frac{d_{l_0} h_{l_0l_0}}{\langle \nu,E_{n+1}\rangle^2}+\alpha\frac{2h_{l_0l_0}\langle e_{l_0},E_{n+1}\rangle}{\langle \nu,E_{n+1}\rangle^3}\bigg|-\bigg|\frac{4a_{l_0}}{\rho}\bigg| \\
&\geq \frac{\kappa_{l_0}}{\langle \nu,E_{n+1}\rangle^2} \left( \beta |d_{l_0}|-\frac{2\alpha}{\langle \nu,E_{n+1}\rangle}\right)-\bigg|\frac{4a_{l_0}}{\rho}\bigg| \\
&\geq \frac{\kappa_{l_0}}{\langle \nu,E_{n+1}\rangle^2} \frac{\beta |d_{l_0}|}{2} - \bigg|\frac{4a_{l_0}}{\rho}\bigg| \\
&\geq \frac{\kappa_{l_0}}{\langle \nu,E_{n+1}\rangle} \frac{\beta}{8\sqrt{\eta(k-1)}}-\frac{4\sqrt{\eta}}{\rho}\\
&\geq \frac{\kappa_{l_0}}{\langle \nu,E_{n+1}\rangle} \frac{\beta}{16\sqrt{\eta(k-1)}} \\
&\geq \frac{\beta}{16\sqrt{\eta(k-1)}} \kappa_{l_0},
\end{align*} where in the second inequality we used \eqref{norm of E} to get
\[|\langle e_{l_0},E_{n+1} \rangle| \leq \sqrt{\sum_{i=1}^{n} \langle e_i,E_{n+1} \rangle^2} \leq 1;\] in the third inequality we had chosen
\[\frac{\beta v_{0}^2}{16\sqrt{\eta(k-1)}} \geq \alpha, \quad \text{where $v_0$ is the universal constant in \eqref{bound for nu},}\] so that
\[\beta d_{l_0}-\frac{2\alpha}{\langle \nu,E_{n+1}\rangle} \geq \frac{\beta d_{l_0}}{2};\] in the fourth inequality we used the claim \eqref{claim}; in the fifth inequality we had assumed
\[\rho\sqrt{\log \kappa_1} \geq \frac{64\eta \sqrt{k-1}}{C\beta}\] so that
\[\frac{\kappa_{l_0}}{\langle \nu,E_{n+1}\rangle} \frac{\beta}{16\sqrt{\eta(k-1)}}-\frac{4\sqrt{\eta}}{\rho} \geq 
\frac{C\beta \sqrt{\log \kappa_1}}{16\sqrt{\eta(k-1)}}- \frac{4\sqrt{n}}{\rho} \geq 0\] under the assumption that $\kappa_l \geq C\sqrt{\log \kappa_1}$ for all $1 \leq l \leq k-1$; and in the sixth inequality we used $\langle \nu,E_{n+1} \rangle \leq 1$ to get 
\[\frac{1}{\langle \nu,E_{n+1}\rangle} \geq 1.\]
Thus, by Lemma \ref{bounds for derivatives}, we have
\begin{align*}
&\ \frac{\varepsilon}{2}\sum_{i=1}^{n} F^{ii}\frac{b_{i}^2}{b}- C\frac{\sum_{i=1}^{n} F^{ii}}{\rho^2}\\
\geq &\ \frac{\varepsilon}{2} F^{l_0l_0} \cdot \frac{b_{l_0}^2}{b^2} \cdot b - C \frac{\sum_{i=1}^{n} F^{ii}}{\rho^2} \\
\geq &\ C\varepsilon \cdot \left(\frac{1}{\kappa_{l_0}^2}\sum_{i=1}^{n} F^{ii}\right) \cdot \frac{\beta^2}{256\eta(k-1)} \kappa_{l_0}^2 \cdot b - C\frac{\sum_{i=1}^{n} F^{ii}}{\rho^2} \\
\geq &\ C\varepsilon\beta^2 b\cdot \sum_{i=1}^{n} F^{ii} - C\frac{\sum_{i=1}^{n} F^{ii}}{\rho^2}\\
\geq &\ 0
\end{align*} if $C\varepsilon \beta^2 b \geq C/\rho^2$.

While if $\kappa_{k-1} \leq C\sqrt{\log \kappa_1}$, then by Lemma \ref{bounds for derivatives} and Lemma \ref{sigma_k properties 2}, we have
\begin{align*}
&\ \frac{C\kappa_1}{b} - C \frac{\sum_{i=1}^{n} F^{ii}}{\rho^2} \\
\geq &\ \frac{C \kappa_1}{b} - \frac{C\kappa_{k-1}}{\rho^2} \\
\geq &\ \frac{C \kappa_1}{b} - \frac{C\sqrt{\log \kappa_1}}{\rho^2}\\
\geq &\ 0
\end{align*} by noting that
\[\frac{\kappa_1}{b} = \frac{\kappa_1}{\log \kappa_1} \geq \sqrt{\kappa_1} \quad \text{for $\kappa_1 \geq 1$}.\] In either case, we can deduce the desired estimate from \eqref{Jacobi 2nd critical 3}.

We now prove \eqref{claim}. Let $\delta_0 \in (0,1)$ be a sufficiently small number. Suppose there exists some $j_0 \geq k$ with $|a_{j_0}| \geq \delta_0$. Then we may proceed as in Case 2 and the curvature estimate follows by taking an even smaller $\delta \leq C\delta_0/(\beta+\alpha)$ in \eqref{even smaller}. Hence, assume
\begin{equation}
|a_{l}| < \delta_0 \quad \text{for all $l \geq k$}.\label{bound for a 3}
\end{equation} Denote the set of indices by
\[I=\{1,\ldots,k-1\} \quad \text{and} \quad J=\{k,\ldots,n\}.\] By \eqref{bound for a 1}, \eqref{bound for a 2}, and \eqref{bound for a 3}, we have
\begin{equation}
|a_{I}|=\sqrt{\sum_{l \in I} a_{l}^2} = \sqrt{\sum_{l=1}^{n} a_{l}^2 - \sum_{l \in J} a_{l}^2}\geq \sqrt{\frac{1}{2\eta}-(n-k+1)\delta_{0}^2} \geq \frac{1}{2\sqrt{\eta}} \label{bound for a 4}
\end{equation} for a sufficiently small $\delta_0=\delta_0(n,k,\eta)$ with
\[(n-k+1)\delta_{0}^2 \leq \frac{1}{4\eta},\] where $\eta$ is the universal constant in \eqref{bound for a 1}. The claim \eqref{claim} would readily follow if we can show that
\[|d_{I}|=\sqrt{\sum_{l \in I} d_{l}^2} \geq \frac{1}{4\sqrt{\eta}}\langle \nu,E_{n+1}\rangle.\] To show this, let us first express each $d_i$ in terms of the $a_i$'s. Recall from \eqref{definition of a} and \eqref{definition of d}, we have
\[d_i=\langle X,e_i \rangle \langle \nu,E_{n+1} \rangle - \langle e_i,E_{n+1} \rangle \langle X,\nu \rangle\] and
\begin{equation}
\langle X,e_i \rangle=a_i + \langle X,E_{n+1} \rangle \langle e_i,E_{n+1} \rangle.\label{definition of a 2}
\end{equation} It follows that
\begin{align}
d_i&=\langle X,e_i \rangle \langle \nu,E_{n+1} \rangle - \langle e_i,E_{n+1} \rangle \langle X,\nu \rangle \nonumber\\
&=[a_i + \langle X,E_{n+1} \rangle \langle e_i,E_{n+1} \rangle] \langle \nu,E_{n+1} \rangle - \langle e_i,E_{n+1} \rangle \langle X,\nu \rangle \nonumber \\
&=a_i \langle \nu, E_{n+1} \rangle + \langle e_i,E_{n+1} \rangle \left[ \langle X,E_{n+1} \rangle \langle \nu, E_{n+1} \rangle - \langle X,\nu \rangle \right]. \label{definition of d 2}
\end{align} Since $E_{n+1}$ is a unit vector and $\{e_1,\ldots,e_n,\nu\}$ is an orthonormal frame of $\bR^{n+1}$, we may write
\begin{equation}
E_{n+1}=\sum_{i=1}^{n} \langle e_i,E_{n+1} \rangle e_i + \langle \nu,E_{n+1} \rangle \nu \label{decompose E}
\end{equation} and note that
\begin{equation}
\sum_{i=1}^{n} \langle e_i,E_{n+1} \rangle^2 + \langle \nu,E_{n+1} \rangle^2 = 1. \label{norm of E}
\end{equation}
Taking the inner product of $X$ and $E_{n+1}$, we get from \eqref{decompose E} that
\begin{equation}
\langle X, E_{n+1} \rangle = \sum_{i=1}^{n} \langle e_i,E_{n+1} \rangle \langle X,e_i \rangle + \langle \nu, E_{n+1} \rangle \langle X,\nu \rangle.\label{inner product of X and E}
\end{equation} Multiplying \eqref{definition of a 2} by $\langle e_i,E_{n+1}\rangle$ and summing over $i$,
\[\sum_{i=1}^{n} \langle e_i,E_{n+1} \rangle a_i = \sum_{i=1}^{n} \langle e_i,E_{n+1} \rangle \langle X,e_i \rangle - \langle X,E_{n+1} \rangle \langle e_i,E_{n+1} \rangle^2.\] Inserting \eqref{inner product of X and E} and \eqref{norm of E} into this equality, we get
\begin{align*}
&\ \sum_{i=1}^{n} \langle e_i,E_{n+1} \rangle a_i\\
=&\ \sum_{i=1}^{n} \langle e_i,E_{n+1} \rangle \langle X,e_i \rangle - \langle X,E_{n+1} \rangle \langle e_i,E_{n+1} \rangle^2 \\
=&\ \langle X,E_{n+1} \rangle - \langle \nu,E_{n+1} \rangle \langle X,\nu \rangle - \langle X,E_{n+1} \rangle  (1-\langle \nu,E_{n+1} \rangle^2) \\
=&\ \langle \nu,E_{n+1} \rangle^2 \langle X,E_{n+1} \rangle - \langle \nu,E_{n+1} \rangle \langle X,\nu \rangle.
\end{align*} Dividing this by $\langle \nu,E_{n+1}\rangle$ and substituting back into \eqref{definition of d 2}, we get
\begin{align}
d_i&=a_i \langle \nu, E_{n+1} \rangle + \langle e_i,E_{n+1} \rangle \left[ \langle X,E_{n+1} \rangle \langle \nu, E_{n+1} \rangle - \langle X,\nu \rangle \right] \nonumber\\
&=a_i \langle \nu, E_{n+1} \rangle + \frac{\langle e_i,E_{n+1} \rangle}{\langle \nu, E_{n+1}\rangle} \sum_{k=1}^{n} \langle e_k,E_{n+1} \rangle a_k.\label{definition of d 3}
\end{align} We can now show a lower bound for $|d_I|$ using bounds for the $a_i$'s. Indeed, multiplying \eqref{definition of d 3} by $a_i$ and summing over $i \in I$, we get
\begin{align*}
&\ \sum_{i \in I} a_i d_i \\
=&\ \langle \nu,E_{n+1} \rangle \sum_{i \in I} a_{i}^2+\frac{\sum_{i \in I} \langle e_i,E_{n+1}\rangle a_i}{\langle \nu,E_{n+1} \rangle} \left(\sum_{k \in I} \langle e_k,E_{n+1} \rangle a_k + \sum_{k \in J} \langle e_k,E_{n+1}\rangle a_k \right) \\
\geq &\ \langle \nu,E_{n+1} \rangle |a_{I}|^2 + \frac{1}{\langle \nu,E_{n+1}\rangle}\left( \sum_{i \in I} \langle e_i,E_{n+1}\rangle a_i\right)\left(\sum_{k \in J} \langle e_k,E_{n+1}\rangle a_k\right). \\
\end{align*} In order to get a lower bound, we estimate the product of the partial sums. By the Cauchy-Schwartz inequality, we have
\begin{align*}
&\ \bigg| \sum_{i \in I} \langle e_i,E_{n+1}\rangle a_i\bigg|\cdot \bigg|\sum_{k \in J} \langle e_k,E_{n+1}\rangle a_k\bigg| \\
\leq &\ \left(\sum_{i \in I} \langle e_i,E_{n+1}\rangle^2\right)^{1/2} \left(\sum_{i \in I} a_{i}^2\right)^{1/2}\left(\sum_{k \in J} \langle e_k,E_{n+1}\rangle^2\right)^{1/2} \left(\sum_{k \in J} a_{k}^2\right)^{1/2}\\
\leq &\ \frac{1}{2}\left(\sum_{i \in I} \langle e_i,E_{n+1}\rangle^2 + \sum_{k \in J} \langle e_k,E_{n+1}\rangle^2\right) \cdot |a_{I}|\cdot |a_{J}| \\
=&\ \frac{|a_I| \cdot |a_J|}{2} \sum_{k=1}^{n} \langle e_k, E_{n+1} \rangle^2.
\end{align*}
By \eqref{norm of E}, we have that
\[\sum_{k=1}^{n} \langle e_k, E_{n+1} \rangle^2=1-\langle \nu,E_{n+1} \rangle^2 \leq 1.\] Thus, we can conclude that
\begin{align*}
\sum_{i \in I} a_id_i \geq |a_I| \left(\langle \nu,E_{n+1} \rangle |a_I|-\frac{|a_J|}{2\langle \nu,E_{n+1}\rangle}\right).
\end{align*} From \eqref{bound for a 3} and \eqref{bound for a 4}, we can choose $\delta_0=\delta_0(n,k,\eta,v_0)>0$ with
\[\frac{v_{0}^2}{2\sqrt{\eta}} \geq (n-k+1)\delta_0,\] where $v_0$ and $\eta$ are the universal constants in \eqref{bound for nu} and \eqref{bound for a 1}, so that
\[\langle \nu,E_{n+1} \rangle |a_I|-\frac{|a_J|}{2\langle \nu,E_{n+1}\rangle} \geq \frac{1}{2}\langle \nu,E_{n+1}\rangle |a_I|\] and hence
\[\sum_{i \in I} a_id_i \geq \frac{1}{2} \langle \nu,E_{n+1} \rangle |a_{I}|^2.\] Finally, by the Cauchy-Schwartz inequality, we have
\[\sum_{i \in I} a_i d_i \leq \left(\sum_{i \in I} a_{i}^2\right)^{1/2}\left(\sum_{i \in I} d_{i}^2\right)^{1/2} = |a_I| \cdot |d_I|.\] Dividing by the common factor, we obtain
\[|d_I| \geq \frac{1}{2} \langle \nu,E_{n+1}\rangle |a_I| \geq \frac{1}{4\sqrt{\eta}}\langle \nu,E_{n+1}\rangle\] and the claim \eqref{claim} readily follows if we had chosen
\[\delta_0 \leq \min \left\{\frac{1}{2\sqrt{\eta(n-k+1)}}, \frac{v_{0}^2}{2(n-k+1)\sqrt{\eta}}\right\}.\]

The proof is now complete.
\end{proof}

\begin{remark} \label{concluding remark 1}
The proof remains valid for strictly $(k+1)$-convex solutions as we now show. The places where we used the assumption that $\kappa \in \Gamma_n$ are \eqref{k+1 convexity 1}, \eqref{k+1 convexity 2} and \eqref{k+1 convexity 3}. Indeed, it suffices to assume $\kappa \in \Gamma_{k+1}$ and then apply Lemma \ref{sigma_k properties 2} to get \eqref{k+1 convexity 3}. Moreover, from \eqref{k+1 convexity 3}, it also follows that
\[\inf \psi \geq C(n,k) \kappa_{k}^2.\] Due to \eqref{sum of n-k+1}, we have
\[|\kappa_n| \leq C(n,k) \kappa_k \leq C(n,k,\inf \psi).\] This gives us a lower bound
\begin{equation}
\kappa_n \geq -C.\label{the lower bound}
\end{equation} With this lower bound, the inequality \eqref{k+1 convexity 2} still holds with $0<\varepsilon<1$ and a sufficiently large $\kappa_1$. Again with this lower bound, we apply the concavity inequality \cite[Lemma 1.1]{Dong-Zhang} of Dong-Zhang to get
\begin{align*}
&\ -\sum_{p,q} \frac{F^{pp,qq}h_{pp1}h_{qq1}}{\kappa_1}-(1+\varepsilon)\frac{F^{11}h_{111}^2}{\kappa_{1}^2}+2\sum_{i>m} \frac{F^{ii}h_{ii1}^2}{\kappa_1(\kappa_1-\kappa_i)}\\
\geq &\ (1+\varepsilon_0)\frac{F^{11}h_{111}^2}{\kappa_{1}^2}-(1+\varepsilon)\frac{F^{11}h_{111}^2}{\kappa_{1}^2}+2\sum_{i>m} \frac{F^{ii}h_{ii1}^2}{\kappa_1(\kappa_1-\kappa_i)}\\
&\quad -\frac{2}{1+\delta_1}\sum_{i>1} \frac{F^{ii}h_{ii1}^2}{\kappa_{1}^2}-\frac{K}{F\kappa_1}\left(\sum_{i=1}^{n} F^{ii}h_{ii1}\right)^2 \\
\end{align*} for some universal constants $\varepsilon_0,\delta_1 \in (0,1)$ and $K \geq 1$. By applying \eqref{1st order approximation} and the Codazzi property, we get
\[h_{ii1}=h_{i1i}=\delta_{i1} \cdot \nabla_i \varphi = 0 \quad \text{for $1<i \leq m$}.\] Thus,
\begin{align*}
&\ 2\sum_{i>m} \frac{F^{ii}h_{ii1}^2}{\kappa_1(\kappa_1-\kappa_i)}-\frac{2}{1+\delta_1}\sum_{i>1} \frac{F^{ii}h_{ii1}^2}{\kappa_{1}^2} \\
=&\ 2\sum_{i>m} \frac{F^{ii}h_{ii1}^2}{\kappa_1(\kappa_1-\kappa_i)}-\frac{2}{1+\delta_1}\sum_{i>m} \frac{F^{ii}h_{ii1}^2}{\kappa_{1}^2}\\
=&\ 2\sum_{i>m} \left[\frac{\kappa_1}{\kappa_1-\kappa_i}-\frac{1}{1+\delta_1}\right]\frac{F^{ii}h_{ii1}^2}{\kappa_{1}^2}.
\end{align*} By the lower bound \eqref{the lower bound}, if $\kappa_1$ is sufficiently large, then
\[\frac{\kappa_1}{\kappa_1-\kappa_i} \geq \frac{\kappa_1}{\kappa_1+C}=\frac{1}{1+\frac{C}{\kappa_1}} \geq \frac{1}{1+\delta_1}.\] Hence, the inequality \eqref{k+1 convexity 1} still holds for $\kappa \in \Gamma_{k+1}$ if $\varepsilon$ is small enough and $\kappa_1$ is large enough. We conclude that our Theorem \ref{our theorem 2} remains valid for strictly $(k+1)$-convex solutions.

Whether the theorem would hold for strictly $k$-convex solutions is unknown at the moment.
\end{remark}

\begin{remark} \label{concluding remark 2}
For the $\sigma_2$ equation and the $\sigma_n/\sigma_{n-2}$ equation, Guan-Qiu \cite{Guan-Qiu} and Liu \cite{Jianxiang} could get \eqref{claim} for $l_0=1$ and $l_0=n$, respectively. However, for the $\sigma_k/\sigma_{k-2}$ equation, we do not have one special index to work with. Instead, we have a range of indices and so getting \eqref{claim} is a subtle point. Our proof of \eqref{claim} is inspired by Guan-Qiu's trick \cite[(68)-(73)]{Guan-Qiu}.
\end{remark}

\begin{proof}[Proof of Corollary \ref{our corollary}]
To prove Corollary \ref{our corollary}, it suffices to show that a solution of the special Lagrangian curvature equation in dimensions $n=3,4$ with the phase $\Theta=\pi$ is also a solution of the curvature quotient equation for $k=3$.

Suppose $u$ is a smooth function in $B_r$ whose graph has principal curvatures $\kappa_1,\ldots,\kappa_n$ such that
\[\kappa_i>0 \quad \forall\ 1 \leq i \leq n \quad \text{in $B_r$}\] and
\begin{equation}
\sum_{i=1}^{n} \arctan \left[\frac{\kappa_i(x)}{\psi(x,u(x))}\right]=\pi \quad \text{in $B_r$} \label{the SL equation}
\end{equation} for some smooth positive function $\psi(x,u)$ in $\overline{B_r} \times \bR$. For each $i$, let
\[a_i:=\frac{\kappa_i}{\psi}, \quad \theta_i:=\arctan a_i.\] Since the graph is strictly convex and $\psi$ is positive,
\[a_i>0 \quad \text{and} \quad 0<\theta_i<\frac{\pi}{2} \quad \text{for all $i$ in $B_r$}.\] By the equation \eqref{the SL equation}, we have
\[\tan \left\{\sum_{i=1}^{n} \arctan \left[\frac{\kappa_i(x)}{\psi(x,u(x))}\right]\right\}=\tan \pi = 0.\] For $n=4$, by the angle sum identity for the tangent function, we have
\begin{equation}
0=\tan (\theta_1+\theta_2+\theta_3+\theta_4)=\frac{\sigma_1(a)-\sigma_3(a)}{1-\sigma_2(a)+\sigma_4(a)}. \label{the angle sum identity}
\end{equation} Since the numerator must vanish, it follows that
\[\frac{\sigma_1(\kappa)}{\psi}=\sigma_1(a)=\sigma_3(a)=\frac{\sigma_3(\kappa)}{\psi^3} \quad \text{in $B_r$}.\] Rearranging yields
\[\left(\frac{\sigma_3}{\sigma_1}\right)(\kappa_1(x),\ldots,\kappa_n(x))=[\psi(x,u(x))]^2 \quad \text{in $B_r$}.\] Note that, by the cosine angle sum identity, the denominator
\[1-\sigma_2(a)+\sigma_4(a)=\frac{\cos(\theta_1+\theta_2+\theta_3+\theta_4)}{\prod_{i=1}^{4} \cos \theta_i}=-\frac{1}{\underbrace{\prod_{i=1}^{4} \cos \theta_i}_{>0}}\] is indeed non-zero. This procedure works the same for $n=3$ because the numerator in \eqref{the angle sum identity} is still $\sigma_1(a)-\sigma_3(a)$ and the denominator is 
\[1-\sigma_2(a)=-\frac{1}{\prod_{i=1}^{3} \cos \theta_i}<0.\]

Now, for $n=3$, we apply Liu's result \cite{Jianxiang} on the quotient $\sigma_n/\sigma_{n-2}$ with the right-hand side $\psi^2$; and for $n=4$, we apply our Theorem \ref{our theorem 2} with the right-hand side $\psi^2$. In either case, we have
\[\max_{B_{r/2}}|\kappa_{\max}| \leq C\] for some $C>0$ whose value depends on  $n, r, \norm{\Sigma}_{C^1(B_r)}$, $\norm{\psi^2}_{C^2(B_r)}$, $\norm{\psi^2}_{L^{\infty}(B_r)}$ and $\norm{1/\psi^2}_{L^{\infty}(B_r)}$. The dependence on the norms of $\psi^2$ can be rewritten as dependence on the corresponding norms of $\psi$. Note that $\psi$ is continuous and positive on $\overline{B_r} \times [-M,M]$, where
\[M \geq \norm{u}_{L^{\infty}(B_r)},\] so $\psi$ is uniformly bounded away from $0$ in $B_r$.

Therefore, we conclude that Corollary \ref{our corollary} follows.
\end{proof}

\section*{Acknowledgments}
This work was begun at The Chinese University of Hong Kong where the author was a Ph.D. candidate. The author would like to thank Professor Man-Chun Lee and Professor Xiaolu Tan for their support. The author is also grateful to Jianxiang Liu for pointing out a gap in a preliminary version of this manuscript, and to Professor Guohuan Qiu for some kind comments. Research of the author was supported by the China Postdoctoral Science Foundation under Grant Number 2026M793406.


\bibliography{refs}

@article {CNS3,
    AUTHOR = {Caffarelli, L. and Nirenberg, L. and Spruck, J.},
     TITLE = {The {D}irichlet problem for nonlinear second-order elliptic
              equations. {III}. {F}unctions of the eigenvalues of the
              {H}essian},
   JOURNAL = {Acta Math.},
  FJOURNAL = {Acta Mathematica},
    VOLUME = {155},
      YEAR = {1985},
    NUMBER = {3-4},
     PAGES = {261--301},
      ISSN = {0001-5962,1871-2509},
   MRCLASS = {35J65 (53C40 58G30)},
  MRNUMBER = {806416},
MRREVIEWER = {Philippe\ Delano\"e},
       DOI = {10.1007/BF02392544},
       URL = {https://doi.org/10.1007/BF02392544},
}

@incollection {CNS4,
    AUTHOR = {Caffarelli, L. and Nirenberg, L. and Spruck, J.},
     TITLE = {Nonlinear second order elliptic equations. {IV}. {S}tarshaped
              compact {W}eingarten hypersurfaces},
 BOOKTITLE = {Current topics in partial differential equations},
     PAGES = {1--26},
 PUBLISHER = {Kinokuniya, Tokyo},
      YEAR = {1986},
      ISBN = {4-87573-105-1},
   MRCLASS = {35J60 (35A30)},
  MRNUMBER = {1112140},
}

@article {CNS5,
    AUTHOR = {Caffarelli, Luis and Nirenberg, Louis and Spruck, Joel},
     TITLE = {Nonlinear second-order elliptic equations. {V}. {T}he
              {D}irichlet problem for {W}eingarten hypersurfaces},
   JOURNAL = {Comm. Pure Appl. Math.},
  FJOURNAL = {Communications on Pure and Applied Mathematics},
    VOLUME = {41},
      YEAR = {1988},
    NUMBER = {1},
     PAGES = {47--70},
      ISSN = {0010-3640,1097-0312},
   MRCLASS = {35J65 (35B45 53A07)},
  MRNUMBER = {917124},
MRREVIEWER = {Philippe\ Delano\"e},
       DOI = {10.1002/cpa.3160410105},
       URL = {https://doi.org/10.1002/cpa.3160410105},
}

@article {Krylov-1987,
    AUTHOR = {Krylov, N. V.},
     TITLE = {On the first boundary value problem for nonlinear degenerate
              elliptic equations},
   JOURNAL = {Izv. Akad. Nauk SSSR Ser. Mat.},
  FJOURNAL = {Izvestiya Akademii Nauk SSSR. Seriya Matematicheskaya},
    VOLUME = {51},
      YEAR = {1987},
    NUMBER = {2},
     PAGES = {242--269, 446},
      ISSN = {0373-2436},
   MRCLASS = {35J65 (35J70)},
  MRNUMBER = {896996},
MRREVIEWER = {Frederick\ A.\ Howes},
       DOI = {10.1070/IM1988v030n02ABEH001002},
       URL = {https://doi.org/10.1070/IM1988v030n02ABEH001002},
}

@article {Ivochkina-1,
    AUTHOR = {Ivochkina, N. M.},
     TITLE = {Solution of the {D}irichlet problem for certain equations of
              {M}onge-{A}mp\`ere type},
   JOURNAL = {Mat. Sb. (N.S.)},
  FJOURNAL = {Matematicheski\u i\ Sbornik. Novaya Seriya},
    VOLUME = {128(170)},
      YEAR = {1985},
    NUMBER = {3},
     PAGES = {403--415, 447},
      ISSN = {0368-8666},
   MRCLASS = {35J65 (53C45)},
  MRNUMBER = {815272},
}

@article {Ivochkina-2,
    AUTHOR = {Ivochkina, N. M.},
     TITLE = {Solution of the {D}irichlet problem for equations of {$m$}th
              order curvature},
   JOURNAL = {Mat. Sb.},
  FJOURNAL = {Matematicheski\u i\ Sbornik},
    VOLUME = {180},
      YEAR = {1989},
    NUMBER = {7},
     PAGES = {867--887, 991},
      ISSN = {0368-8666},
   MRCLASS = {35J60 (53A07 58G30)},
  MRNUMBER = {1014618},
MRREVIEWER = {Nikola\u i\ Kutev},
       DOI = {10.1070/SM1990v067n02ABEH002089},
       URL = {https://doi.org/10.1070/SM1990v067n02ABEH002089},
}

@article {Guan-Ma,
    AUTHOR = {Guan, Pengfei and Ma, Xi-Nan},
     TITLE = {The {C}hristoffel-{M}inkowski problem. {I}. {C}onvexity of
              solutions of a {H}essian equation},
   JOURNAL = {Invent. Math.},
  FJOURNAL = {Inventiones Mathematicae},
    VOLUME = {151},
      YEAR = {2003},
    NUMBER = {3},
     PAGES = {553--577},
      ISSN = {0020-9910,1432-1297},
   MRCLASS = {35J60 (53C65)},
  MRNUMBER = {1961338},
MRREVIEWER = {Fabiana\ Leoni},
       DOI = {10.1007/s00222-002-0259-2},
       URL = {https://doi.org/10.1007/s00222-002-0259-2},
}

@article {Guan-Li-Li,
    AUTHOR = {Guan, Pengfei and Li, Junfang and Li, Yanyan},
     TITLE = {Hypersurfaces of prescribed curvature measure},
   JOURNAL = {Duke Math. J.},
  FJOURNAL = {Duke Mathematical Journal},
    VOLUME = {161},
      YEAR = {2012},
    NUMBER = {10},
     PAGES = {1927--1942},
      ISSN = {0012-7094,1547-7398},
   MRCLASS = {53C23 (53C42)},
  MRNUMBER = {2954620},
MRREVIEWER = {Andrea\ Colesanti},
       DOI = {10.1215/00127094-1645550},
       URL = {https://doi.org/10.1215/00127094-1645550},
}

@article {Guan-Guan,
    AUTHOR = {Guan, Bo and Guan, Pengfei},
     TITLE = {Convex hypersurfaces of prescribed curvatures},
   JOURNAL = {Ann. of Math. (2)},
  FJOURNAL = {Annals of Mathematics. Second Series},
    VOLUME = {156},
      YEAR = {2002},
    NUMBER = {2},
     PAGES = {655--673},
      ISSN = {0003-486X,1939-8980},
   MRCLASS = {53C21 (35J60 53C42)},
  MRNUMBER = {1933079},
MRREVIEWER = {John\ Urbas},
       DOI = {10.2307/3597202},
       URL = {https://doi.org/10.2307/3597202},
}

@article {Guan-Li,
    AUTHOR = {Guan, Pengfei and Li, Junfang},
     TITLE = {The quermassintegral inequalities for {$k$}-convex starshaped
              domains},
   JOURNAL = {Adv. Math.},
  FJOURNAL = {Advances in Mathematics},
    VOLUME = {221},
      YEAR = {2009},
    NUMBER = {5},
     PAGES = {1725--1732},
      ISSN = {0001-8708,1090-2082},
   MRCLASS = {52B60},
  MRNUMBER = {2522433},
       DOI = {10.1016/j.aim.2009.03.005},
       URL = {https://doi.org/10.1016/j.aim.2009.03.005},
}

@article {BV,
    AUTHOR = {Brendle, Simon and Viaclovsky, Jeff A.},
     TITLE = {A variational characterization for {$\sigma_{n/2}$}},
   JOURNAL = {Calc. Var. Partial Differential Equations},
  FJOURNAL = {Calculus of Variations and Partial Differential Equations},
    VOLUME = {20},
      YEAR = {2004},
    NUMBER = {4},
     PAGES = {399--402},
      ISSN = {0944-2669,1432-0835},
   MRCLASS = {53C44},
  MRNUMBER = {2071927},
MRREVIEWER = {Kai\ Seng\ Chou},
       DOI = {10.1007/s00526-003-0234-9},
       URL = {https://doi.org/10.1007/s00526-003-0234-9},
}

@article {GVW,
    AUTHOR = {Guan, Pengfei and Viaclovsky, Jeff and Wang, Guofang},
     TITLE = {Some properties of the {S}chouten tensor and applications to
              conformal geometry},
   JOURNAL = {Trans. Amer. Math. Soc.},
  FJOURNAL = {Transactions of the American Mathematical Society},
    VOLUME = {355},
      YEAR = {2003},
    NUMBER = {3},
     PAGES = {925--933},
      ISSN = {0002-9947,1088-6850},
   MRCLASS = {53C21 (58E11)},
  MRNUMBER = {1938739},
MRREVIEWER = {Santiago\ R.\ Simanca},
       DOI = {10.1090/S0002-9947-02-03132-X},
       URL = {https://doi.org/10.1090/S0002-9947-02-03132-X},
}

@article {V,
    AUTHOR = {Viaclovsky, Jeff A.},
     TITLE = {Conformal geometry, contact geometry, and the calculus of
              variations},
   JOURNAL = {Duke Math. J.},
  FJOURNAL = {Duke Mathematical Journal},
    VOLUME = {101},
      YEAR = {2000},
    NUMBER = {2},
     PAGES = {283--316},
      ISSN = {0012-7094,1547-7398},
   MRCLASS = {53C21 (35J60 49J10 58E11)},
  MRNUMBER = {1738176},
MRREVIEWER = {David\ L.\ Finn},
       DOI = {10.1215/S0012-7094-00-10127-5},
       URL = {https://doi.org/10.1215/S0012-7094-00-10127-5},
}

@article {Lu-2,
    AUTHOR = {Lu, Siyuan},
     TITLE = {Interior {$C^2$} estimate for {H}essian quotient equation in
              general dimension},
   JOURNAL = {Ann. PDE},
  FJOURNAL = {Annals of PDE. Journal Dedicated to the Analysis of Problems
              from Physical Sciences},
    VOLUME = {11},
      YEAR = {2025},
    NUMBER = {2},
     PAGES = {Paper No. 17, 26},
      ISSN = {2524-5317,2199-2576},
   MRCLASS = {35J60 (35B65 35J15 35J96)},
  MRNUMBER = {4926480},
MRREVIEWER = {Ahmed\ Mohammed},
       DOI = {10.1007/s40818-025-00215-1},
       URL = {https://doi.org/10.1007/s40818-025-00215-1},
}

@misc{Lu-Sroka,
      title={On {L}iouville's theorem for the {H}essian quotient equation {$\sigma_2/\sigma_1$}}, 
      author={Siyuan Lu and Marcin Sroka},
      year={preprint on arXiv},
      note={\href{https://arxiv.org/abs/2602.14946}{https://arxiv.org/abs/2602.14946}}, 
}

@misc{Li-Wu,
      title={A concavity inequality and interior {$C^2$} estimate for {H}essian quotient equations}, 
      author={Zhisu Li and Ke Wu},
      year={preprint on arXiv},
      note={\href{https://arxiv.org/abs/2608.17405}{https://arxiv.org/abs/2608.17405}}, 
}

@misc{Dong-Zhang,
      title={Interior estimates for the {H}essian quotient equations}, 
      author={Weisong Dong and Ruijia Zhang},
      year={preprint on arXiv},
      note={\href{https://arxiv.org/abs/2608.19087}{https://arxiv.org/abs/2608.19087}}, 
}

@misc{Tsai,
      title={A concavity inequality for {H}essian quotient equations}, 
      author={Yi-Lin Tsai},
      year={preprint on arXiv},
      note={\href{https://arxiv.org/abs/2608.16383}{https://arxiv.org/abs/2608.16383}},
}

@misc{Jiao-Sui,
      title={Interior {H}essian estimates for {H}essian quotient equations in dimension three}, 
      author={Heming Jiao and Zhenan Sui},
      year={preprint in arXiv},
      note={\href{https://arxiv.org/abs/2602.14064}{https://arxiv.org/abs/2602.14064}},
}

@article {Guan-Ren-Wang,
    AUTHOR = {Guan, Pengfei and Ren, Changyu and Wang, Zhizhang},
     TITLE = {Global {$C^2$}-estimates for convex solutions of curvature
              equations},
   JOURNAL = {Comm. Pure Appl. Math.},
  FJOURNAL = {Communications on Pure and Applied Mathematics},
    VOLUME = {68},
      YEAR = {2015},
    NUMBER = {8},
     PAGES = {1287--1325},
      ISSN = {0010-3640,1097-0312},
   MRCLASS = {53C42 (35B05 35B45 35J60 35J96)},
  MRNUMBER = {3366747},
MRREVIEWER = {Paul\ Laurain},
       DOI = {10.1002/cpa.21528},
       URL = {https://doi.org/10.1002/cpa.21528},
}

@article {Zhou,
    AUTHOR = {Zhou, XingChen},
     TITLE = {Notes on generalized special {L}agrangian equation},
   JOURNAL = {Calc. Var. Partial Differential Equations},
  FJOURNAL = {Calculus of Variations and Partial Differential Equations},
    VOLUME = {63},
      YEAR = {2024},
    NUMBER = {8},
     PAGES = {Paper No. 197, 28},
      ISSN = {0944-2669,1432-0835},
   MRCLASS = {35B45 (35B65 35J25 35J60)},
  MRNUMBER = {4782212},
MRREVIEWER = {Fengping\ Yao},
       DOI = {10.1007/s00526-024-02801-w},
       URL = {https://doi.org/10.1007/s00526-024-02801-w},
}

@article {WY,
    AUTHOR = {Warren, Micah and Yuan, Yu},
     TITLE = {Hessian estimates for the sigma-2 equation in dimension 3},
   JOURNAL = {Comm. Pure Appl. Math.},
  FJOURNAL = {Communications on Pure and Applied Mathematics},
    VOLUME = {62},
      YEAR = {2009},
    NUMBER = {3},
     PAGES = {305--321},
      ISSN = {0010-3640,1097-0312},
   MRCLASS = {35J60 (49Q05 53C38)},
  MRNUMBER = {2487850},
MRREVIEWER = {Fabiana\ Leoni},
       DOI = {10.1002/cpa.20251},
       URL = {https://doi.org/10.1002/cpa.20251},
}

@article {SY-Annals,
    AUTHOR = {Shankar, Ravi and Yuan, Yu},
     TITLE = {Hessian estimates for the sigma-2 equation in dimension four},
   JOURNAL = {Ann. of Math. (2)},
  FJOURNAL = {Annals of Mathematics. Second Series},
    VOLUME = {201},
      YEAR = {2025},
    NUMBER = {2},
     PAGES = {489--513},
      ISSN = {0003-486X,1939-8980},
   MRCLASS = {35B45 (35B53 35B65 35J96)},
  MRNUMBER = {4880431},
MRREVIEWER = {Zedong\ Yang},
       DOI = {10.4007/annals.2025.201.2.4},
       URL = {https://doi.org/10.4007/annals.2025.201.2.4},
}

@article {Qiu-1,
    AUTHOR = {Qiu, Guohuan},
     TITLE = {Interior {H}essian estimates for {$\sigma_2$} equations in
              dimension three},
   JOURNAL = {Front. Math.},
  FJOURNAL = {Frontiers of Mathematics},
    VOLUME = {19},
      YEAR = {2024},
    NUMBER = {4},
     PAGES = {577--598},
      ISSN = {2731-8648,2731-8656},
   MRCLASS = {35J60 (35B45 35J96)},
  MRNUMBER = {4768566},
       DOI = {10.1007/s11464-023-0156-0},
       URL = {https://doi.org/10.1007/s11464-023-0156-0},
}

@article {Qiu-2,
    AUTHOR = {Qiu, Guohuan},
     TITLE = {Interior curvature estimates for hypersurfaces of prescribing
              scalar curvature in dimension three},
   JOURNAL = {Amer. J. Math.},
  FJOURNAL = {American Journal of Mathematics},
    VOLUME = {146},
      YEAR = {2024},
    NUMBER = {3},
     PAGES = {579--605},
      ISSN = {0002-9327,1080-6377},
   MRCLASS = {53C21 (53C42)},
  MRNUMBER = {4752675},
MRREVIEWER = {Theodoros\ Vlachos},
       DOI = {10.1353/ajm.2024.a928319},
       URL = {https://doi.org/10.1353/ajm.2024.a928319},
}

@article {Heinz,
    AUTHOR = {Heinz, Erhard},
     TITLE = {On elliptic {M}onge-{A}mp\`ere equations and {W}eyl's
              embedding problem},
   JOURNAL = {J. Analyse Math.},
  FJOURNAL = {Journal d'Analyse Math\'ematique},
    VOLUME = {7},
      YEAR = {1959},
     PAGES = {1--52},
      ISSN = {0021-7670,1565-8538},
   MRCLASS = {35.00 (53.00)},
  MRNUMBER = {111943},
MRREVIEWER = {A.\ V.\ Pogorelov},
       DOI = {10.1007/BF02787679},
       URL = {https://doi.org/10.1007/BF02787679},
}

@article {Chen-Han-Ou,
    AUTHOR = {Chen, Chuanqiang and Han, Fei and Ou, Qianzhong},
     TITLE = {The interior {$C^2$} estimate for the {M}onge-{A}mp\`ere
              equation in dimension {$n=2$}},
   JOURNAL = {Anal. PDE},
  FJOURNAL = {Analysis \& PDE},
    VOLUME = {9},
      YEAR = {2016},
    NUMBER = {6},
     PAGES = {1419--1432},
      ISSN = {2157-5045,1948-206X},
   MRCLASS = {35J96 (35B45 35B65)},
  MRNUMBER = {3555315},
MRREVIEWER = {John\ Urbas},
       DOI = {10.2140/apde.2016.9.1419},
       URL = {https://doi.org/10.2140/apde.2016.9.1419},
}

@article {Liu,
    AUTHOR = {Liu, Jiakun},
     TITLE = {Interior {$C^2$} estimate for {M}onge-{A}mp\`ere equation in
              dimension two},
   JOURNAL = {Proc. Amer. Math. Soc.},
  FJOURNAL = {Proceedings of the American Mathematical Society},
    VOLUME = {149},
      YEAR = {2021},
    NUMBER = {6},
     PAGES = {2479--2486},
      ISSN = {0002-9939,1088-6826},
   MRCLASS = {35J96 (35J62)},
  MRNUMBER = {4246799},
MRREVIEWER = {Maria\ Cristina\ Mariani},
       DOI = {10.1090/proc/15459},
       URL = {https://doi.org/10.1090/proc/15459},
}

@book {Pogorelov-1978,
    AUTHOR = {Pogorelov, Aleksey Vasil\cprime yevich},
     TITLE = {The {M}inkowski multidimensional problem},
    SERIES = {Scripta Series in Mathematics},
      NOTE = {Translated from the Russian by Vladimir Oliker,
              Introduction by Louis Nirenberg},
 PUBLISHER = {V. H. Winston \& Sons, Washington, DC; Halsted Press [John
              Wiley \& Sons], New York-Toronto-London},
      YEAR = {1978},
     PAGES = {106},
      ISBN = {0-470-99358-8},
   MRCLASS = {53C45 (35J60 52A20)},
  MRNUMBER = {478079},
MRREVIEWER = {H.\ W.\ Guggenheimer},
}

@article {Urbas,
    AUTHOR = {Urbas, John I. E.},
     TITLE = {On the existence of nonclassical solutions for two classes of
              fully nonlinear elliptic equations},
   JOURNAL = {Indiana Univ. Math. J.},
  FJOURNAL = {Indiana University Mathematics Journal},
    VOLUME = {39},
      YEAR = {1990},
    NUMBER = {2},
     PAGES = {355--382},
      ISSN = {0022-2518,1943-5258},
   MRCLASS = {35J60 (35B05)},
  MRNUMBER = {1089043},
       DOI = {10.1512/iumj.1990.39.39020},
       URL = {https://doi.org/10.1512/iumj.1990.39.39020},
}

@misc{Mei-Yan,
      title={Interior {$C^{2}$} estimate for semi-convex solutions to a class of {H}essian quotient equations in arbitrary dimensions}, 
      author={Xinqun Mei and Jin Yan},
      year={preprint on arXiv},
      note={\href{https://arxiv.org/abs/2604.23349}{https://arxiv.org/abs/2604.23349}},
}

@misc{Dong-Xu-Zhang,
      title={Pogorelov interior estimates for general sum-type {H}essian equations}, 
      author={Weisong Dong and Sirui Xu and Ruijia Zhang},
      year={preprint on arXiv},
      note={\href{https://arxiv.org/abs/2603.15345}{https://arxiv.org/abs/2603.15345}},
      }

@article {Chou-Wang,
    AUTHOR = {Chou, Kai-Seng and Wang, Xu-Jia},
     TITLE = {A variational theory of the {H}essian equation},
   JOURNAL = {Comm. Pure Appl. Math.},
  FJOURNAL = {Communications on Pure and Applied Mathematics},
    VOLUME = {54},
      YEAR = {2001},
    NUMBER = {9},
     PAGES = {1029--1064},
      ISSN = {0010-3640,1097-0312},
   MRCLASS = {35J60 (35A15 35J20 35J65 58E05)},
  MRNUMBER = {1835381},
MRREVIEWER = {John\ Urbas},
       DOI = {10.1002/cpa.1016},
       URL = {https://doi.org/10.1002/cpa.1016},
}

@article {Sheng-Urbas-Wang,
    AUTHOR = {Sheng, Weimin and Urbas, John and Wang, Xu-Jia},
     TITLE = {Interior curvature bounds for a class of curvature equations},
   JOURNAL = {Duke Math. J.},
  FJOURNAL = {Duke Mathematical Journal},
    VOLUME = {123},
      YEAR = {2004},
    NUMBER = {2},
     PAGES = {235--264},
      ISSN = {0012-7094,1547-7398},
   MRCLASS = {35J60 (35B45 53C21)},
  MRNUMBER = {2066938},
MRREVIEWER = {Zhi\ Ren\ Jin},
       DOI = {10.1215/S0012-7094-04-12321-8},
       URL = {https://doi.org/10.1215/S0012-7094-04-12321-8},
}

@article {Li-Ren-Wang,
    AUTHOR = {Li, Ming and Ren, Changyu and Wang, Zhizhang},
     TITLE = {An interior estimate for convex solutions and a rigidity
              theorem},
   JOURNAL = {J. Funct. Anal.},
  FJOURNAL = {Journal of Functional Analysis},
    VOLUME = {270},
      YEAR = {2016},
    NUMBER = {7},
     PAGES = {2691--2714},
      ISSN = {0022-1236,1096-0783},
   MRCLASS = {35J60 (35B08 35B65 53C42 58J05)},
  MRNUMBER = {3464054},
MRREVIEWER = {Barbara\ Brandolini},
       DOI = {10.1016/j.jfa.2016.01.008},
       URL = {https://doi.org/10.1016/j.jfa.2016.01.008},
}

@article {Ren-Wang-Xiao,
    AUTHOR = {Ren, Changyu and Wang, Zhizhang and Xiao, Ling},
     TITLE = {The prescribed curvature problem for entire hypersurfaces in
              {M}inkowski space},
   JOURNAL = {Anal. PDE},
  FJOURNAL = {Analysis \& PDE},
    VOLUME = {17},
      YEAR = {2024},
    NUMBER = {1},
     PAGES = {1--40},
      ISSN = {2157-5045,1948-206X},
   MRCLASS = {53C42 (35J60 49Q10 53C50)},
  MRNUMBER = {4702314},
MRREVIEWER = {Leandro\ F.\ Pessoa},
       DOI = {10.2140/apde.2024.17.1},
       URL = {https://doi.org/10.2140/apde.2024.17.1},
}

@article {Zhang,
    AUTHOR = {Zhang, Ruijia},
     TITLE = {{$C^2$} estimates for {$k$}-{H}essian equations and a rigidity
              theorem},
   JOURNAL = {Adv. Math.},
  FJOURNAL = {Advances in Mathematics},
    VOLUME = {480},
      YEAR = {2025},
     PAGES = {Paper No. 110488, 35},
      ISSN = {0001-8708,1090-2082},
   MRCLASS = {35J60 (53C42)},
  MRNUMBER = {4948350},
       DOI = {10.1016/j.aim.2025.110488},
       URL = {https://doi.org/10.1016/j.aim.2025.110488},
}

@article {Michael-Simon,
    AUTHOR = {Michael, J. H. and Simon, L. M.},
     TITLE = {Sobolev and mean-value inequalities on generalized
              submanifolds of {$R\sp{n}$}},
   JOURNAL = {Comm. Pure Appl. Math.},
  FJOURNAL = {Communications on Pure and Applied Mathematics},
    VOLUME = {26},
      YEAR = {1973},
     PAGES = {361--379},
      ISSN = {0010-3640,1097-0312},
   MRCLASS = {49F10 (46E35)},
  MRNUMBER = {344978},
MRREVIEWER = {David\ Kinderlehrer},
       DOI = {10.1002/cpa.3160260305},
       URL = {https://doi.org/10.1002/cpa.3160260305},
}

@misc{Tu,
      title={Pogorelov type estimates for {$(n-1)$}-{H}essian equations and related rigidity theorems}, 
      author={Qiang Tu},
      year={preprint on arXiv},
      note={\href{https://arxiv.org/abs/2405.02939}{https://arxiv.org/abs/2405.02939}},
}

@article {HL,
    AUTHOR = {Harvey, Reese and Lawson, Jr., H. Blaine},
     TITLE = {Calibrated geometries},
   JOURNAL = {Acta Math.},
  FJOURNAL = {Acta Mathematica},
    VOLUME = {148},
      YEAR = {1982},
     PAGES = {47--157},
      ISSN = {0001-5962,1871-2509},
   MRCLASS = {53C40 (49F20 53C65 58E15 58G30)},
  MRNUMBER = {666108},
       DOI = {10.1007/BF02392726},
       URL = {https://doi.org/10.1007/BF02392726},
}

@article {Savin,
    AUTHOR = {Savin, Ovidiu},
     TITLE = {Small perturbation solutions for elliptic equations},
   JOURNAL = {Comm. Partial Differential Equations},
  FJOURNAL = {Communications in Partial Differential Equations},
    VOLUME = {32},
      YEAR = {2007},
    NUMBER = {4-6},
     PAGES = {557--578},
      ISSN = {0360-5302,1532-4133},
   MRCLASS = {35J60 (35B65)},
  MRNUMBER = {2334822},
MRREVIEWER = {Fabiana\ Leoni},
       DOI = {10.1080/03605300500394405},
       URL = {https://doi.org/10.1080/03605300500394405},
}

@article {Fan,
    AUTHOR = {Fan, Zhenyu},
     TITLE = {Hessian estimates for the sigma-2 equation with variable
              right-hand side terms in dimension 4},
   JOURNAL = {Adv. Math.},
  FJOURNAL = {Advances in Mathematics},
    VOLUME = {494},
      YEAR = {2026},
     PAGES = {Paper No. 110953},
      ISSN = {0001-8708,1090-2082},
   MRCLASS = {35B45 (35B65 35J60 35J96)},
  MRNUMBER = {5059337},
       DOI = {10.1016/j.aim.2026.110953},
       URL = {https://doi.org/10.1016/j.aim.2026.110953},
}

@article {Mooney-PAMS,
    AUTHOR = {Mooney, Connor},
     TITLE = {Strict 2-convexity of convex solutions to the quadratic
              {H}essian equation},
   JOURNAL = {Proc. Amer. Math. Soc.},
  FJOURNAL = {Proceedings of the American Mathematical Society},
    VOLUME = {149},
      YEAR = {2021},
    NUMBER = {6},
     PAGES = {2473--2477},
      ISSN = {0002-9939,1088-6826},
   MRCLASS = {35J60 (35B65)},
  MRNUMBER = {4246798},
MRREVIEWER = {Jos\'e\ Carmona Tapia},
       DOI = {10.1090/proc/15454},
       URL = {https://doi.org/10.1090/proc/15454},
}

@article {Mooney-JMS,
    AUTHOR = {Mooney, Connor},
     TITLE = {Remarks on the quadratic {H}essian equation},
   JOURNAL = {J. Math. Study},
  FJOURNAL = {Journal of Mathematical Study. Shuxue Yanjiu},
      YEAR = {to appear},
    pages={preprint available on arXiv: \href{https://arxiv.org/abs/2505.14586}{https://arxiv.org/abs/2505.14586}},
}

@article {Guan-Qiu,
    AUTHOR = {Guan, Pengfei and Qiu, Guohuan},
     TITLE = {Interior {$C^2$} regularity of convex solutions to prescribing
              scalar curvature equations},
   JOURNAL = {Duke Math. J.},
  FJOURNAL = {Duke Mathematical Journal},
    VOLUME = {168},
      YEAR = {2019},
    NUMBER = {9},
     PAGES = {1641--1663},
      ISSN = {0012-7094,1547-7398},
   MRCLASS = {35J60 (35B45 35B53 35J62)},
  MRNUMBER = {3961212},
MRREVIEWER = {Hugo\ Tavares},
       DOI = {10.1215/00127094-2019-0001},
       URL = {https://doi.org/10.1215/00127094-2019-0001},
}

@article {Chen-Jian-Zhou,
    AUTHOR = {Chen, Ruosi and Jian, Huaiyu and Zhou, Xingchen},
     TITLE = {An integral approach to prescribing scalar curvature
              equations},
   JOURNAL = {Math. Ann.},
  FJOURNAL = {Mathematische Annalen},
    VOLUME = {394},
      YEAR = {2026},
    NUMBER = {3},
     PAGES = {Paper No. 72, 29},
      ISSN = {0025-5831,1432-1807},
   MRCLASS = {35J60 (35J96 53C21)},
  MRNUMBER = {5035624},
       DOI = {10.1007/s00208-026-03312-z},
       URL = {https://doi.org/10.1007/s00208-026-03312-z},
}

@article {LT-Crelle,
    AUTHOR = {Lu, Siyuan and Tsai, Yi-Lin},
     TITLE = {Pogorelov type interior {$C^{2}$} estimate for {H}essian
              quotient equation and its application},
   JOURNAL = {J. Reine Angew. Math.},
  FJOURNAL = {Journal f\"ur die Reine und Angewandte Mathematik. [Crelle's
              Journal]},
    VOLUME = {831},
      YEAR = {2026},
     PAGES = {155--184},
      ISSN = {0075-4102,1435-5345},
   MRCLASS = {35B65 (35J96)},
  MRNUMBER = {5025057},
       DOI = {10.1515/crelle-2025-0091},
       URL = {https://doi.org/10.1515/crelle-2025-0091},
}

@article {LT-CPAA,
    AUTHOR = {Lu, Siyuan and Tsai, Yi-Lin},
     TITLE = {A note on interior {$C^2$} estimate for general {H}essian
              quotient equation},
   JOURNAL = {Commun. Pure Appl. Anal.},
  FJOURNAL = {Communications on Pure and Applied Analysis},
    VOLUME = {33},
      YEAR = {2026},
     PAGES = {88--100},
      ISSN = {1534-0392,1553-5258},
   MRCLASS = {35J93 (35B65)},
  MRNUMBER = {5097018},
       DOI = {10.3934/cpaa.2026045},
       URL = {https://doi.org/10.3934/cpaa.2026045},
}

@article {Guan-Sroka,
    AUTHOR = {Guan, Pengfei and Sroka, Marcin},
     TITLE = {A special concavity property for positive {H}essian quotient
              operators},
   JOURNAL = {Discrete Contin. Dyn. Syst.},
  FJOURNAL = {Discrete and Continuous Dynamical Systems},
    VOLUME = {54},
      YEAR = {2026},
     PAGES = {50--60},
      ISSN = {1078-0947,1553-5231},
   MRCLASS = {35J96 (58J05)},
  MRNUMBER = {5097161},
       DOI = {10.3934/dcds.2025181},
       URL = {https://doi.org/10.3934/dcds.2025181},
}

@misc{Jianxiang,
      title={Interior curvature estimate for curvature quotient equations on convex hypersurfaces}, 
      author={Jianxiang Liu},
      year={preprint on arXiv},
      note={\href{https://arxiv.org/abs/2505.00360}{https://arxiv.org/abs/2505.00360}}, 
}

@misc{Qiu-Zhou,
      title={A priori interior estimates for special {L}agrangian curvature equations}, 
      author={Guohuan Qiu and Xingchen Zhou},
      year={preprint on arXiv},
      note={\href{https://arxiv.org/abs/2407.15159}{https://arxiv.org/abs/2407.15159}},
}

@misc{Fung,
      title={Doubling Argument of the {H}essian Estimate for the {H}essian Quotient Equations}, 
      author={Cheuk Yan Fung},
      note={\href{https://arxiv.org/abs/2607.21982}{https://arxiv.org/abs/2607.21982}},
}

@article {Lin-Trudinger,
    AUTHOR = {Lin, Mi and Trudinger, Neil S.},
     TITLE = {On some inequalities for elementary symmetric functions},
   JOURNAL = {Bull. Austral. Math. Soc.},
  FJOURNAL = {Bulletin of the Australian Mathematical Society},
    VOLUME = {50},
      YEAR = {1994},
    NUMBER = {2},
     PAGES = {317--326},
      ISSN = {0004-9727},
   MRCLASS = {26D20 (05E05 35J99)},
  MRNUMBER = {1296759},
       DOI = {10.1017/S0004972700013770},
       URL = {https://doi.org/10.1017/S0004972700013770},
}

@article {SY-CVPDE,
    AUTHOR = {Shankar, Ravi and Yuan, Yu},
     TITLE = {Hessian estimate for semiconvex solutions to the sigma-2
              equation},
   JOURNAL = {Calc. Var. Partial Differential Equations},
  FJOURNAL = {Calculus of Variations and Partial Differential Equations},
    VOLUME = {59},
      YEAR = {2020},
    NUMBER = {1},
     PAGES = {Paper No. 30, 12},
      ISSN = {0944-2669,1432-0835},
   MRCLASS = {35J96 (35B45)},
  MRNUMBER = {4054864},
MRREVIEWER = {Georgios\ Psaradakis},
       DOI = {10.1007/s00526-019-1690-1},
       URL = {https://doi.org/10.1007/s00526-019-1690-1},
}

@article {SY-JMS,
    AUTHOR = {Shankar, Ravi and Yuan, Yu},
     TITLE = {Regularity for almost convex viscosity solutions of the
              sigma-2 equation},
   JOURNAL = {J. Math. Study},
  FJOURNAL = {Journal of Mathematical Study. Shuxue Yanjiu},
    VOLUME = {54},
      YEAR = {2021},
    NUMBER = {2},
     PAGES = {164--170},
      ISSN = {2096-9856,2617-8702},
   MRCLASS = {35J60},
  MRNUMBER = {4210289},
       DOI = {10.4208/jms.v54n2.21.03},
       URL = {https://doi.org/10.4208/jms.v54n2.21.03},
}

@article {CY,
    AUTHOR = {Chang, Sun-Yung Alice and Yuan, Yu},
     TITLE = {A {L}iouville problem for the sigma-2 equation},
   JOURNAL = {Discrete Contin. Dyn. Syst.},
  FJOURNAL = {Discrete and Continuous Dynamical Systems},
    VOLUME = {28},
      YEAR = {2010},
    NUMBER = {2},
     PAGES = {659--664},
      ISSN = {1078-0947,1553-5231},
   MRCLASS = {35J60 (35B06 35C05 35J96)},
  MRNUMBER = {2644763},
MRREVIEWER = {John\ Urbas},
       DOI = {10.3934/dcds.2010.28.659},
       URL = {https://doi.org/10.3934/dcds.2010.28.659},
}

@article {MSY,
    AUTHOR = {McGonagle, Matt and Song, Chong and Yuan, Yu},
     TITLE = {Hessian estimates for convex solutions to quadratic {H}essian
              equation},
   JOURNAL = {Ann. Inst. H. Poincar\'e{} C Anal. Non Lin\'eaire},
  FJOURNAL = {Annales de l'Institut Henri Poincar\'e{} C. Analyse Non
              Lin\'eaire},
    VOLUME = {36},
      YEAR = {2019},
    NUMBER = {2},
     PAGES = {451--454},
      ISSN = {0294-1449,1873-1430},
   MRCLASS = {35J96 (35B45 35B65)},
  MRNUMBER = {3913193},
MRREVIEWER = {Ahmed\ Mohammed},
       DOI = {10.1016/j.anihpc.2018.07.001},
       URL = {https://doi.org/10.1016/j.anihpc.2018.07.001},
}

@article {CGM,
    AUTHOR = {Caffarelli, Luis and Guan, Pengfei and Ma, Xi-Nan},
     TITLE = {A constant rank theorem for solutions of fully nonlinear
              elliptic equations},
   JOURNAL = {Comm. Pure Appl. Math.},
  FJOURNAL = {Communications on Pure and Applied Mathematics},
    VOLUME = {60},
      YEAR = {2007},
    NUMBER = {12},
     PAGES = {1769--1791},
      ISSN = {0010-3640,1097-0312},
   MRCLASS = {35J60},
  MRNUMBER = {2358648},
MRREVIEWER = {Eduardo\ V.\ Teixeira},
       DOI = {10.1002/cpa.20197},
       URL = {https://doi.org/10.1002/cpa.20197},
}

@article {Evans,
    AUTHOR = {Evans, Lawrence C.},
     TITLE = {Classical solutions of fully nonlinear, convex, second-order
              elliptic equations},
   JOURNAL = {Comm. Pure Appl. Math.},
  FJOURNAL = {Communications on Pure and Applied Mathematics},
    VOLUME = {35},
      YEAR = {1982},
    NUMBER = {3},
     PAGES = {333--363},
      ISSN = {0010-3640,1097-0312},
   MRCLASS = {35J60 (93E20)},
  MRNUMBER = {649348},
MRREVIEWER = {Pierre-Louis\ Lions},
       DOI = {10.1002/cpa.3160350303},
       URL = {https://doi.org/10.1002/cpa.3160350303},
}

@article {Krylov,
    AUTHOR = {Krylov, N. V.},
     TITLE = {Boundedly inhomogeneous elliptic and parabolic equations},
   JOURNAL = {Izv. Akad. Nauk SSSR Ser. Mat.},
  FJOURNAL = {Izvestiya Akademii Nauk SSSR. Seriya Matematicheskaya},
    VOLUME = {46},
      YEAR = {1982},
    NUMBER = {3},
     PAGES = {487--523, 670},
      ISSN = {0373-2436},
   MRCLASS = {35J65 (35K60)},
  MRNUMBER = {661144},
MRREVIEWER = {R.\ Schumann},
}

@article {BCD,
    AUTHOR = {Brendle, Simon and Choi, Kyeongsu and Daskalopoulos,
              Panagiota},
     TITLE = {Asymptotic behavior of flows by powers of the {G}aussian
              curvature},
   JOURNAL = {Acta Math.},
  FJOURNAL = {Acta Mathematica},
    VOLUME = {219},
      YEAR = {2017},
    NUMBER = {1},
     PAGES = {1--16},
      ISSN = {0001-5962,1871-2509},
   MRCLASS = {53C44},
  MRNUMBER = {3765656},
MRREVIEWER = {Lu\ Wang},
       DOI = {10.4310/ACTA.2017.v219.n1.a1},
       URL = {https://doi.org/10.4310/ACTA.2017.v219.n1.a1},
}

@article {Ren-Wang-2,
    AUTHOR = {Ren, Changyu and Wang, Zhizhang},
     TITLE = {The global curvature estimate for the {$n-2$} {H}essian
              equation},
   JOURNAL = {Calc. Var. Partial Differential Equations},
  FJOURNAL = {Calculus of Variations and Partial Differential Equations},
    VOLUME = {62},
      YEAR = {2023},
    NUMBER = {9},
     PAGES = {Paper No. 239, 50},
      ISSN = {0944-2669,1432-0835},
   MRCLASS = {53C21 (35J20 53C42)},
  MRNUMBER = {4646879},
MRREVIEWER = {Weisong\ Dong},
       DOI = {10.1007/s00526-023-02570-y},
       URL = {https://doi.org/10.1007/s00526-023-02570-y},
}

@article {Wang-Yuan,
    AUTHOR = {Wang, Dake and Yuan, Yu},
     TITLE = {Hessian estimates for special {L}agrangian equations with
              critical and supercritical phases in general dimensions},
   JOURNAL = {Amer. J. Math.},
  FJOURNAL = {American Journal of Mathematics},
    VOLUME = {136},
      YEAR = {2014},
    NUMBER = {2},
     PAGES = {481--499},
      ISSN = {0002-9327,1080-6377},
   MRCLASS = {35J60 (35B45 53D12)},
  MRNUMBER = {3188067},
MRREVIEWER = {Fabiana\ Leoni},
       DOI = {10.1353/ajm.2014.0009},
       URL = {https://doi.org/10.1353/ajm.2014.0009},
}

\end{document}